\definecolor{forestgreen}{rgb}{0.13, 0.55, 0.13}
\definecolor{greenao}{rgb}{0.0, 0.5, 0.0}
\definecolor{dimgray}{rgb}{0.41, 0.41, 0.41}
\definecolor{afb}{rgb}{0.36, 0.54, 0.66}
\definecolor{bncs}{rgb}{0.0, 0.53, 0.74}
\definecolor{purple}{rgb}{0.47, 0.32, 0.66}
\definecolor{tangerine}{rgb}{0.95, 0.52, 0.0}
\definecolor{darklavender}{rgb}{0.45, 0.31, 0.59}
\definecolor{darkorchid}{rgb}{0.6, 0.2, 0.8}
\definecolor{do}{rgb}{0.6, 0.2, 0.8}
\definecolor{bdf}{rgb}{0.19,0.55, 0.91}
\newcommand{\MI}[1]{\textcolor{teal}{#1}}
\DeclareMathOperator{\Rea}{Re}
\DeclareMathOperator{\I}{Im}
\newcommand{\R}{\mathbb{R}}
\newcommand{\la}{\langle}
\newcommand{\ra}{\rangle}
\newcommand{\lb}{\lambda}
\newcommand{\ep}{\epsilon}
\newcommand{\wh}{\widehat}
\newcommand{\p}{\partial}
\newcommand{\px}{\partial_x}
\newcommand{\pv}{\partial_v}
\newcommand{\pt}{\partial_t}
\newcommand{\cd}{\cdot}
\newcommand{\lp}{\left(}
\newcommand{\rp}{\right)}
\newcommand{\lefb}{\left[}
\newcommand{\rigb}{\right]}
\newcommand{\bu}{\bar{u}}
\newcommand{\bv}{\bar{v}}
\newcommand{\bU}{\bar{U}}
\newcommand{\W}{\mathcal{W}}
\newcommand{\bW}{\widebar{\W}}
\newcommand{\uap}{u_{app}}
\newcommand{\buap}{\bu_{app}}
\newcommand{\bLU}{\widebar{LU}}
\numberwithin{equation}{section}
\newtheorem{thm}{Theorem}[section]
\newtheorem{definition}{Definition}[section]
\newtheorem{lemma}{Lemma}[section]
\newtheorem{prop}{Proposition}[section]
\def\@tocline#1#2#3#4#5#6#7{\relax
  \ifnum #1>\c@tocdepth 
  \else
    \par \addpenalty\@secpenalty\addvspace{#2}%
    \begingroup \hyphenpenalty\@M
    \@ifempty{#4}{%
      \@tempdima\csname r@tocindent\number#1\endcsname\relax
    }{%
      \@tempdima#4\relax
    }%
    \parindent\z@ \leftskip#3\relax \advance\leftskip\@tempdima\relax
    \rightskip\@pnumwidth plus4em \parfillskip-\@pnumwidth
    #5\leavevmode\hskip-\@tempdima
      \ifcase #1
       \or\or \hskip 1em \or \hskip 2em \else \hskip 3em \fi%
      #6\nobreak\relax
    \hfill\hbox to\@pnumwidth{\@tocpagenum{#7}}\par
    \nobreak
    \endgroup
  \fi}
\title{Modified Scattering and Asymptotic Completeness for the Derivative Nonlinear Schr\"odinger equation}
\author[]{Allison Byars}
\date{}
\begin{document}
\begin{abstract}
   We prove a modified scattering and asymptotic completeness for the derivative nonlinear Schr\"odinger equation.  This is the first result proving asymptotic completeness in a quasilinear setting.  Our approach combines the method of testing by wave packets, introduced by Ifrim and Tataru, a bootstrap argument, and the Klainerman–Sobolev vector field method.
\end{abstract}

\maketitle

\tableofcontents

\section{Introduction}
We consider the Cauchy problem for the derivative nonlinear Schr\"odinger
(DNLS) equation
\begin{equation}
\begin{cases}
iu_t+u_{xx}=-i\partial_x(|u|^2u)\\
u(0,x)=u_0(x),
\end{cases}
\tag{DNLS}
\label{dnls}
\end{equation}
 where the unknown is a complex valued function, $u:\mathbb{R}\times \mathbb{R}\rightarrow \mathbb{C}.$

The \eqref{dnls} equation arises as a model of large-wavelength Alfv\'en waves in plasma, as well as in other physical settings. The equation was first derived in 1974 by Mj{\o}elhus \cite{mjoelhus1974application} using the reductive perturbation method. For more physical applications, we refer the interested reader to the following references \cite{cite:champeaux, jenkins2019derivative,cite:physical2,cite:physical3, cite:physical4}. 

\eqref{dnls} is a dispersive equation, which has the dispersion relation $\omega (\xi)=-\xi^2,$ with the group velocity given by $\omega'(\xi)=-2\xi$. \eqref{dnls} also admits a scaling law:
\begin{equation*}
    u_\lambda (t,x):= \sqrt{\lambda}u(\lambda^2t, \lambda x),
\end{equation*}
where if $u$ is a solution, then so is $u_\lambda.$  This scaling gives the critical Sobolev space as $L^2(\R)$. 

\eqref{dnls} is completely integrable, meaning it has an infinite number of conservation laws, a Lax pair \cite{cite:KaupNew}, and an inverse scattering transformation \cite{jenkins2018global}, which are just a few of the many properties completely integrable systems posses. We will list some conservation laws for our equation below, however, it is important to note that the results and techniques employed in this paper do not rely on the system being integrable. 

As mentioned about, \eqref{dnls} has an infinite number of conserved quantities.  In particular, it has one at the level of $H^{s/2}$, for every $s\in \mathbb{N}$ \cite{wyller1989conserved}. Three of the most common conserved quantities are the mass, at the level of $L^2$,
\begin{equation*}
    M(u):= \int |u(x)|^2\, dx,
\end{equation*}
the momentum, at the level of $H^{\frac{1}{2}}$
\begin{equation*}
    P(u):=\int \I(\bar{u}\px u)-\frac{1}{2}|u|^4 \, dx,
\end{equation*}
and the energy, at the level of $H^1$
\begin{equation*}
     E(u):=\int |u_x|^2-\frac{3}{2}|u|^2 \I(\bar{u}u_x)+\frac{1}{2}|u|^6 \, dx.
\end{equation*}
It also has a Hamiltonian structure given by the operator $\mathcal{J}=\px$ with $P$ as the Hamiltonian. This means we can generate the \eqref{dnls} equation using the functional derivative of the momentum: 
\begin{align*}
    u_t=\mathcal{J}\left(\frac{\delta P(u)}{\delta \bu}\right).
\end{align*}

The well-posedness of \eqref{dnls} has been studied extensively. As mentioned above, the scaling critical space is $L^2,$ so when proving well-posedness in $H^s,$ the case $s=0$ is a natural threshold to target.  It is also important to know the regularity where the data-to-solution map is Lipschitz continuous.  When this property holds, the equation behaves more like a semilinear one, and can be handled with a fixed point argument. In contrast, when Lipschitz continuity fails, one can think of the problem as quasilinear, with only continuity holding. For \eqref{dnls}, the map fails to be Lipschitz below $s=1/2$ \cite{cite:Bialess12}. Early work established local well-posedness for $s> 3/2$ using a contraction argument \cite{cite:WP>3/2,cite:WP>3/2.2}, as well as ill-posedness below $s=0$ from self-similar solutions \cite{cite:selfsim1,cite:selfsim2}.  The well-posedness regularity was later lowered to $1/2$ by Takaoka \cite{cite:Taks12}, and global well-posedness for $H^{\frac{1}{2}}$ was obtained in 2022 \cite{bahouri2022global}.
In 2023 Killip, Ntekoume, and Vi\c{s}an studied the well-posedness of \eqref{dnls} below the $s=1/2$ threshold, and were able to prove well-posedness for $1/6<s<1/2$ \cite{cite:WPvisanH16}, and eventually all the way down to $s=0,$ which is the critical space, $L^2$ \cite{cite:1}.

The purpose of this paper is to prove a modified scattering and asymptotic completeness result for \eqref{dnls}. The present work should be viewed as a natural continuation of our earlier analysis of \eqref{dnls}, see \cite{byars2024globaldynamicssmalldata} . In \cite{byars2024globaldynamicssmalldata}, we demonstrated that global solutions satisfy a dispersive estimate under the assumption of small and localized initial data. Gaining an understanding of the global dynamics, beyond the scope of a mere global existence theory, provided the essential groundwork for the developments we pursue in this work. The main result of that paper was as follows:
\begin{thm}[Theorem 1.1 of \cite{byars2024globaldynamicssmalldata}]
\label{dispersive}
    Let $u$ be a solution to \eqref{dnls} with small and localized data, i.e. for $\epsilon \ll 1,$
    \begin{equation}
\|xu_0\|_{H^1}+\|u_0\|_{H^1}\leq \epsilon
    \end{equation}
Then we have the following bound on $Lu$,
\begin{equation}
    \|Lu\|_{H^1_x}\lesssim \epsilon\, \langle t \rangle^{C\epsilon^2},
    \label{Lubound1}
\end{equation}
and the dispersive bounds
\begin{equation*}
    \|u(t,x)\|_{L^\infty_x} +  \|u_x(t,x)\|_{L^\infty_x}\lesssim \epsilon \, \langle t\rangle^{-\frac{1}{2}} \text{ for all } t\in \R,
    \label{dispersiveestimate1}
\end{equation*}
\end{thm}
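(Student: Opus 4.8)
The plan is a bootstrap (continuity) argument, combining the Hayashi--Ozawa gauge transformation to neutralise the quasilinear part of the nonlinearity, a weighted energy estimate for the vector field $L:=x+2it\partial_x$ (which commutes with $i\partial_t+\partial_x^2$), the Klainerman--Sobolev inequality, and the Ifrim--Tataru method of testing by wave packets to sharpen the decay rate. First I would pass to the gauge variable $v:=u\exp\!\big(-i\int_{-\infty}^{x}|u(y)|^{2}\,dy\big)$; then $v$ solves a cubic Schr\"odinger equation in which the dangerous transport term $|u|^{2}u_{x}$ has been removed, the remaining cubic interaction (of schematic type $v^{2}\bar v_{x}$, plus a benign quintic) carrying a null structure whose only resonant contribution is a phase rotation. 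Using the available global well-posedness theory together with conservation of mass and energy, Gagliardo--Nirenberg and smallness give, unconditionally for all $t$, $\|u(t)\|_{H^{1}}\sim\|v(t)\|_{H^{1}}\lesssim\epsilon$, so $H^{1}$ need not be bootstrapped. On a time interval $[0,T]$ I would then assume
\[
\|Lv(t)\|_{H^{1}}\le A\epsilon\langle t\rangle^{C\epsilon^{2}},\qquad \|v(t)\|_{L^{\infty}}+\|v_{x}(t)\|_{L^{\infty}}\le A\epsilon\langle t\rangle^{-1/2},
\]
with $A$ a large absolute constant, and try to recover both with $A$ replaced by $A/2$.

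\textbf{Weighted energy estimate.} Commuting $L$ through the $v$-equation and using the commutation relations, $w:=Lv$ satisfies a Schr\"odinger equation whose right-hand side is cubic in $(v,\bar v,w,\bar w)$, linear in the pair $(w,\bar w)$, with the same derivative count as the original nonlinearity and the inherited null structure; in particular the top-order, non-perturbative term $|v|^{2}\,\I(\bar w\,w_{x})$ that a direct $L^{2}$ estimate for the un-gauged equation would create does not appear, precisely because the gauge removed $|u|^{2}u_{x}$ (and any residual derivative loss is absorbed by a further normal-form correction). Running the weighted $H^{1}$ energy estimate for $w$ and feeding in the dispersive bound $\|v\|_{W^{1,\infty}}^{2}\lesssim\epsilon^{2}\langle t\rangle^{-1}$ gives $\tfrac{d}{dt}\|w(t)\|_{H^{1}}^{2}\lesssim\epsilon^{2}\langle t\rangle^{-1}\|w(t)\|_{H^{1}}^{2}+(\text{lower order})$, so Gronwall yields $\|Lv(t)\|_{H^{1}}\lesssim\epsilon\langle t\rangle^{C\epsilon^{2}}$ --- recovering the first bootstrap bound and producing the exponent in \eqref{Lubound1}. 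I expect this to be the main obstacle: because \eqref{dnls} is quasilinear a naive energy estimate loses a derivative, so one must check that the gauge transformation is compatible with the weighted field $L$ (the commutator $[L,\mathcal{G}]$ generates factors of the form $(\int|u|^{2})\times(\text{lower order})$ that must not reintroduce a loss) and that the normal-form corrections coexist with these commutators.

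\textbf{Decay: Klainerman--Sobolev, then wave packets.} The Klainerman--Sobolev inequality $\|f(t)\|_{L^{\infty}}\lesssim\langle t\rangle^{-1/2}\|f(t)\|_{L^{2}}^{1/2}\|Lf(t)\|_{L^{2}}^{1/2}$, applied to $v$ and (via $Lv_{x}=\partial_x Lv-v$) to $v_{x}$, already gives $\|v(t)\|_{W^{1,\infty}}\lesssim\epsilon\langle t\rangle^{-1/2+C\epsilon^{2}/2}$: the claimed rate up to an arbitrarily small loss, which is \emph{not} enough to close the bootstrap. To remove the loss I would test by wave packets: for each velocity $\nu$ introduce the linear-flow-adapted packet $\Psi_{\nu}(t,x):=\langle t\rangle^{-1/2}\chi\!\big(\tfrac{x-\nu t}{\langle t\rangle^{1/2}}\big)e^{ix^{2}/(4t)}$ ($\chi$ a fixed bump) and set $\gamma(t,\nu):=\langle v(t),\Psi_{\nu}(t)\rangle$. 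Then (a) using the $\|Lv\|_{H^{1}}$ bound, $\gamma(t,\nu)$ equals $\langle t\rangle^{1/2}v(t,\nu t)$ up to a unit-modulus factor and an error $O(\epsilon\langle t\rangle^{-\delta+C\epsilon^{2}})$, $\delta>0$; (b) differentiating under the integral, inserting the $v$-equation, using the null structure (an integration by parts in $t$ kills the non-resonant pieces) and the dispersive bound, one gets the ODE $\dot\gamma(t,\nu)=\tfrac{i\,c(\nu)}{t}|\gamma(t,\nu)|^{2}\gamma(t,\nu)+O(\langle t\rangle^{-1-\delta+C\epsilon^{2}})$ with $c(\nu)\in\R$; (c) since the leading term is purely imaginary times $\gamma$, $\tfrac{d}{dt}|\gamma|^{2}=O(\langle t\rangle^{-1-\delta+C\epsilon^{2}}|\gamma|)$ is time-integrable once $C\epsilon^{2}<\delta$, whence $|\gamma(t,\nu)|\le|\gamma(0,\nu)|+O(\epsilon)\lesssim\epsilon$ uniformly in $t\le T$ and $\nu$. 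With (a) this gives $\|v(t)\|_{L^{\infty}}\lesssim\epsilon\langle t\rangle^{-1/2}$ with no loss; the same analysis for the packet adapted to $v_{x}$ (on which $v_{x}\approx\tfrac{i\nu}{2}v$) recovers $\|v_{x}(t)\|_{L^{\infty}}\lesssim\epsilon\langle t\rangle^{-1/2}$. The second delicate point is that the approximation in (a)--(b) must be quantitative with a genuine power gain $\langle t\rangle^{-\delta}$, since the errors have to beat the $\langle t\rangle^{C\epsilon^{2}}$ growth of $\|Lv\|_{H^{1}}$.

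\textbf{Closing and transfer.} Both bootstrap bounds are recovered with improved constants, and local well-posedness in the weighted space $\{u:xu\in H^{1}\}$ ensures the bootstrapped quantities depend continuously on $t$, so a continuity argument gives $T=\infty$. Finally I would undo the gauge, $u=\mathcal{G}^{-1}v$: since $\mathcal{G}^{\pm1}$ is multiplication by a unit-modulus, $H^{1}$-bounded phase (for data of size $\epsilon$), the dispersive bounds transfer directly, and $Lu=L(\mathcal{G}^{-1}v)$ differs from $\mathcal{G}^{-1}Lv$ only by commutator terms of the schematic form $(\int|u|^{2})\times(\text{no }L\text{ or derivative loss})$, already controlled; this yields $\|u(t)\|_{L^{\infty}}+\|u_{x}(t)\|_{L^{\infty}}\lesssim\epsilon\langle t\rangle^{-1/2}$ and $\|Lu(t)\|_{H^{1}}\lesssim\epsilon\langle t\rangle^{C\epsilon^{2}}$, i.e.\ \eqref{Lubound1} and \eqref{dispersiveestimate1}.
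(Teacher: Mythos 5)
This theorem is quoted from the author's earlier paper, and the present paper does not reprove it; but both the cited work and the framework reviewed in Sections 1--2 make clear that the proof there proceeds \emph{without} any gauge transformation: the bootstrap, the $L$-energy estimates, the Klainerman--Sobolev inequality, and the wave-packet ODE for $\gamma(t,v)=\int u\,\bar\Phi_v\,dx$ are all run directly on the quasilinear equation, and the paper explicitly identifies this as its methodological point of departure from Hayashi--Naumkin. Your proposal has the same overall architecture (a priori $H^1$ control from the conservation laws, Gr\"onwall for $\|L\cdot\|_{H^1}$ giving the $\langle t\rangle^{C\epsilon^2}$ growth, Klainerman--Sobolev giving decay with an $\epsilon^2$ loss, and wave-packet testing with the conservative ODE $\dot\gamma=\tfrac{ic(v)}{t}|\gamma|^2\gamma+O(t^{-5/4+C\epsilon^2})$ to remove that loss), but you route everything through the Hayashi--Ozawa gauge $v=u\,e^{-i\int_{-\infty}^x|u|^2}$. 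That is a legitimate alternative and is essentially the classical semilinearization hybridized with the Ifrim--Tataru method; what it buys you is that the cubic term with the derivative on $u$ is removed before you ever write an energy identity, at the cost of tracking the commutators $[L,\mathcal G]$ and $[\partial_x,\mathcal G]$ (which, as you note, produce factors $t|u|^2\lesssim\epsilon^2$ after the dispersive bound and are harmless). What the un-gauged approach buys is exactly that this bookkeeping disappears: the motivation you give for gauging --- that ``a naive energy estimate loses a derivative'' --- is not actually accurate for \eqref{dnls}. The worst terms in the $L^2$ and $H^1$ identities for $u$, $Lu$, $\partial_x Lu$ have the schematic form $\int|u|^2\,\partial_x(|G|^2)\,dx$ with $G\in\{u,u_x,Lu,\partial_xLu\}$, and since the coefficient $|u|^2$ is \emph{real} a single integration by parts converts them to $\int\partial_x(|u|^2)|G|^2\,dx\lesssim\|u\|_{L^\infty}\|u_x\|_{L^\infty}\|G\|_{L^2}^2$, which the dispersive bootstrap absorbs; this is precisely the manipulation carried out in Sections 4.3.1--4.3.4 for the analogous quantity $U$, and it is why the quasilinear character (failure of Lipschitz dependence) does not obstruct energy estimates for a single solution. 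Two small points to tidy in your write-up: the wave-packet normalization degenerates as $t\to0$, so the packet argument and the bound $|\gamma(t,\nu)|\le|\gamma(1,\nu)|+O(\epsilon^3)$ should be initialized at $t\ge1$ with the regime $|t|\le1$ handled by local theory; and the bound on $\|u_x\|_{L^\infty}$ obtained from the packet adapted to $v_x$ requires $\|Lu_x\|_{L^2}\le\|\partial_xLu\|_{L^2}+\|u\|_{L^2}$, i.e.\ the full $H^1$ bound \eqref{Lubound1} must be part of the bootstrap, which your set-up does include.
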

where $\langle \cdot \rangle$ are the usual Japanese brackets, and $L$ is the vector field 
\begin{align*}
    \langle t\rangle:=\left(1+t^2\right)^{\frac{1}{2}},\,\, L:=x+2ti\px.
\end{align*}

From \cite{cite:1}, it is known that \eqref{dnls} has the following family of solitons. For $\theta\in (0,\frac{\pi}{2})$, define the initial profile
\begin{align*}
    q_0(x,\theta):=&\sqrt{2\sin (2\theta)}\frac{\cosh^3(x-i\theta)}{|\cosh(x-i\theta)|^4}e^{-ix\cot(2\theta)}.
\end{align*}
The corresponding soliton solution is
\begin{align*}
    q(t,x,\theta)=q_0(x+2\cot(2\theta)t,\theta)e^{it\csc^2(2\theta)}.
\end{align*}
The above solitons can also be rescaled, translated and rotated, and hence the problem admits a four-parameter family of solitons. This observation is particularly important, since solitons are known to preserve their shape and therefore do not exhibit dispersive decay. On the other hand, the setting adopted in \cite{byars2024globaldynamicssmalldata} rules out the existence of small solitons.

In this work, we again restrict ourselves to small, localized initial data, for which solitons do not arise. To understand why, we provide the following calculation.
In \cite{cite:1}, it is shown that the $L^2$ norm of this soliton satisfies
\begin{align*}
    \|q_0\|_{L^2_x}^2=\|q\|_{L^2_x}^2=8\theta,
\end{align*}
which is small in $L^2$ for small $\theta.$ However, one also finds
\begin{align*}
   \|xq_0\|_{L^2}, \,\,\, \|q_0'\|_{L^2}\sim \frac{1}{\theta},
\end{align*}
so that $\|xq_0\|_{H^1}\sim \theta+\frac{1}{\theta}$. In particular,
\begin{align*}
    \Vert xq_0\Vert_{H^1}\Vert q_0\Vert_{L^2}\geq 1.
\end{align*}
This is true not only for the above $q_0$ but also for all of its rescaled versions.  This shows that indeed, it is not possible for the solitons to be localized in $H^1$ and small in $L^2$.

\subsection{Why modified scattering?}

After obtaining a global dispersive bound, a natural next question to ask is whether the solution scatters to its corresponding linear solution.

In general, for a linear dispersive equation
\begin{align*}
    i\pt \phi-A\phi=0,
\end{align*}
the solution is given by $\phi(t)=e^{-itA}\phi_0,$ or some initial data $\phi_0$.  We say the nonlinear solution $u$ \emph{scatters}  to a linear solution  in a space $X$ f there exists some linear data $u_+$ so that 
\begin{align*}
\|u(t,x)-e^{-itA}u_+\|_{X}\to 0
\end{align*}
as $t\to \infty.$ Classical examples of scattering include NLS \cite{scattering_nls_1979}, Klein-Gordon \cite{strauss1974scattering}, and a more general, detailed explanation in \cite[section 3.6]{cite:Tao}.

For Schr\"odinger-type equations classical scattering is expressed in the form 
\begin{align*}
\|u(t,x)-t^{-\frac{1}{2}}e^{-\frac{ix^2}{2t}} W(x/t)\|_{X}\to 0,
\end{align*}
where $W$ can be thought as an asymptotic profile. See section \ref{linear section} for more about the linear solution. 

In many cases, when the problem is strongly nonlinear, the chances of having a classical scattering result decrease.  Instead, we aim to prove a modified scattering result complemented by an asymptotic completeness theory for the problem. \emph{Modified scattering} is similar to the classical case, but the asymptotic profile is altered by nonlinear interactions, leading to additional terms, see e.g. \cite{hayashi_naumkin_modscat_nls_1998,cite:NLS, harrop2016mod_scat}.

For \eqref{dnls}, classical scattering cannot occur. Instead, we prove that we have a modified  scattering-type asymptotic behavior
\begin{align*}
\|u(t,x)-t^{-\frac{1}{2}}e^{\frac{ix^2}{4t}-\frac{ix}{2t}|W(x/t)|^2\log t} W(x/t)\|_{X}\to 0
\end{align*}
where we get a logarithmic correction. This claim is supported by our previous result \cite{byars2024globaldynamicssmalldata}, which showed that the nonlinear solution decays at the same rate as the linear one. 

Here, we prove both modified scattering and asymptotic completeness for small-data solutions. Our main theorem is as follows.
\begin{thm}
    Let $u$ be a solution to \eqref{dnls} with small and localized data, i.e. for $\epsilon \ll 1,$
    \begin{equation}
        \|xu_0\|_{H^1}+\|u_0\|_{H^1}\leq \epsilon.
        \label{initials}
    \end{equation}

   \begin{enumerate}[label=(\alph*)]
  \item (Modified Scattering)
   Then there exists some function $W\in H^{1-C\epsilon^2,1-C\epsilon^2}$ such that
    \begin{align}\label{u mod scat}
        u(t,x)=&t^{-\frac{1}{2}}e^{i\frac{x^2}{4t}}W\left(\frac{x}{t}\right)e^{-\frac{ix}{2t}|W\left(\frac{x}{t}\right)|^2\log t}+err_x,\\
        \hat{u}(t,\xi)=&e^{it\xi^2}W(2\xi)e^{-i\xi|W(2\xi)|^2\log t}+err_\xi ,\label{hatu mod scat}
    \end{align}
    where 
    \begin{align*}
        err_x\in O_{L^\infty}(\epsilon (1+t)^{-\frac{3}{4}+C\epsilon^2})\cap  O_{L^2}(\epsilon (1+t)^{-1+C\epsilon^2}),
    \end{align*}
    and 
    \begin{align*}
      err_\xi\in O_{L^\infty}(   \epsilon (1+t)^{-\frac{1}{4}+C\epsilon^2})\cap O_{L^2}(   \epsilon (1+t)^{-\frac{1}{2}+C\epsilon^2}).
    \end{align*}
         \item (Asymptotic Completeness) Let $C$ be a large universal constant.  For each $W$ satisfying 
    \begin{equation*}
        \|W\|_{H^{3/2+C\epsilon^2}(\R)}\ll \epsilon \ll 1, \,\,\, \|vW\|_{H^{3/2+2\delta}_v}\ll\epsilon, \,\,\, \|v^2W\|_{H^{1}_v}\ll\epsilon
    \end{equation*}
    there exists an unique $u_0\in H^{1,1}$ satisfying \eqref{initials} such that \eqref{u mod scat} and \eqref{hatu mod scat} hold for the corresponding solution to \eqref{dnls}.
   \end{enumerate}
   
    \label{mainthm}
\end{thm}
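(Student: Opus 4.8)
The plan is to run a fixed-point/bootstrap argument in the profile variable. Recall that conjugating by the linear flow, one writes $u(t) = e^{it\partial_{xx}} v(t)$ (or works directly with $\hat u$), and the DNLS nonlinearity together with the stationary-phase expansion of $e^{it\partial_{xx}}$ produces an asymptotic ODE for the profile along rays $x/t = 2\xi$. The key structural fact, already exploited in \cite{byars2024globaldynamicssmalldata} via testing by wave packets, is that the reduced dynamics for $\hat u(t,\xi)$ is, up to errors, $\partial_t \hat u \approx -i\,\xi\,\tfrac1t\,|\hat u|^2\hat u$, a phase-rotation equation that conserves $|\hat u(t,\xi)|$ to leading order. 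This is precisely what forces the logarithmic correction in \eqref{hatu mod scat} and why classical scattering fails. For part (b) the idea is to read this ODE \emph{backwards}: prescribe the asymptotic profile $W$ at $t=+\infty$, solve the profile equation backward in time on $[T,\infty)$ for $T$ large to produce an approximate solution with the correct asymptotics, then continue it down to $t=0$ using the well-posedness theory and the a priori estimates of Theorem \ref{dispersive}, extracting $u_0$ as the trace at $t=0$.

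Concretely, I would proceed as follows. \textbf{Step 1 (Function spaces and the iteration map).} Fix the weighted Sobolev space $X = H^{1,1}$ for the data and, on a time interval $[T,\infty)$, the bootstrap space controlling $\|Lu\|_{H^1}\lesssim \epsilon\langle t\rangle^{C\epsilon^2}$ and $\|u\|_{L^\infty}+\|u_x\|_{L^\infty}\lesssim \epsilon\langle t\rangle^{-1/2}$ as in Theorem \ref{dispersive}. Given a candidate profile behavior, define the ansatz $\hat u_{\mathrm{app}}(t,\xi) = e^{it\xi^2}W(2\xi)e^{-i\xi|W(2\xi)|^2\log t}$ and let the error $w = u - u_{\mathrm{app}}$ solve the corresponding equation. \textbf{Step 2 (Backward construction near infinity).} Show the nonlinear Duhamel iteration for $w$ on $[T,\infty)$, with zero data at $t=\infty$, is a contraction for $T = T(\epsilon)$ large: here one uses the dispersive decay $t^{-1/2}$ to gain integrability of the cubic nonlinearity (the derivative loss in $\partial_x(|u|^2u)$ is absorbed by the $L$-vector field bounds and the commutation $[L, i\partial_t + \partial_{xx}]=0$, together with the algebra property of $H^1$), producing $u$ on $[T,\infty)$ satisfying \eqref{u mod scat}–\eqref{hatu mod scat} with the stated error rates. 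The hypotheses $\|W\|_{H^{3/2+C\epsilon^2}}\ll\epsilon$, $\|vW\|_{H^{3/2+2\delta}}\ll\epsilon$, $\|v^2W\|_{H^1}\ll\epsilon$ are exactly what is needed so that $u_{\mathrm{app}}$ itself lies in the bootstrap space — the two derivatives of $v$-weight correspond to $\|x u\|_{H^1}$ and the $L$-bound, and the extra $C\epsilon^2$ of regularity covers the $\langle t\rangle^{C\epsilon^2}$ growth. \textbf{Step 3 (Continuation to $t=0$).} Use global well-posedness for \eqref{dnls} (in $H^{1,1}$, via \cite{bahouri2022global} together with propagation of the weighted norm) to extend the solution from $[T,\infty)$ down to $[0,\infty)$; the a priori bounds of Theorem \ref{dispersive} guarantee no blow-up of the relevant norms, and set $u_0 := u(0,\cdot)\in H^{1,1}$, which satisfies \eqref{initials} by the smallness of $W$. \textbf{Step 4 (Uniqueness).} If two data $u_0, \tilde u_0$ give solutions with the same asymptotic profile $W$, then $u-\tilde u$ solves the difference equation with error decaying faster than any ansatz term; running the contraction of Step 2 on the difference (now with a genuinely small Lipschitz constant for $T$ large) forces $u\equiv\tilde u$ on $[T,\infty)$, and backward uniqueness for \eqref{dnls} — again from the well-posedness theory — propagates this to $t=0$.

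I expect the main obstacle to be Step 2: closing the contraction for the error equation despite the quasilinear derivative loss in $\partial_x(|u|^2 u)$. One cannot simply estimate in $H^1$ because the nonlinearity loses a derivative relative to the solution; the resolution is to differentiate the equation, use that $u_x$ satisfies a similar equation with a manageable right-hand side, and crucially exploit the commutator identity $[L, i\partial_t+\partial_{xx}]=0$ so that $Lu$ obeys an equation of the same type — this is the point where the Klainerman–Sobolev vector field method enters, converting the weighted bound $\|Lu\|_{H^1}$ into the pointwise decay that makes the cubic term time-integrable near $t=\infty$. A secondary subtlety is that the phase correction $e^{-i\xi|W(2\xi)|^2\log t}$ grows logarithmically, so when differentiating $u_{\mathrm{app}}$ in $\xi$ (i.e. applying $L$) one picks up factors of $\log t$; these must be shown to be compatible with the $\langle t\rangle^{C\epsilon^2}$ budget, which is why the regularity thresholds in the hypotheses are stated with a small $C\epsilon^2$ loss rather than at the integer/half-integer level. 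Finally, matching the precise error exponents $-3/4+C\epsilon^2$ in $L^\infty$ and $-1+C\epsilon^2$ in $L^2$ (and their $\xi$-side counterparts) requires tracking the stationary-phase remainder carefully, but this is routine once the bootstrap is closed.
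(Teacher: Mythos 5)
There is a genuine gap in your treatment of part (b), at the very first step: you take as ansatz $u_{\mathrm{app}}$ built from the full profile $W$, i.e. $t^{-1/2}e^{ix^2/4t}W(x/t)e^{-\frac{ix}{2t}|W(x/t)|^2\log t}$. At the stated regularity $W\in H^{3/2+2\delta}$ this is \emph{not} a good enough approximate solution: the source term $f=(i\partial_t+\partial_x^2)u_{\mathrm{app}}+i\partial_x(|u_{\mathrm{app}}|^2u_{\mathrm{app}})$ contains $t^{-2}W''$ and, after applying $L$ and $\partial_x$, even $W'''$, so it is not even finite in the norms you need, let alone integrable in time. The paper resolves this by replacing $W$ with the time-dependent frequency truncation $\mathcal{W}=W_{\leq t^{1/2}}$, which buys the bounds $\|\mathcal{W}''\|_{L^2}\lesssim Mt^{1/4-\delta}$, $\|\mathcal{W}'''\|_{L^2}\lesssim Mt^{3/4-\delta}$ (at the price of new terms from $\partial_t\mathcal{W}=t^{-1}W_{t^{1/2}}$, which must also be estimated) and ultimately $\|f\|_{L^2}\lesssim Mt^{-3/2}\log t$, $\|Lf\|_{L^2}\lesssim Mt^{-5/4}$. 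Without this device your Step 2 cannot get off the ground, and the hypotheses on $W$ are not, as you claim, "exactly what is needed so that $u_{\mathrm{app}}$ lies in the bootstrap space" — they are what is needed for the \emph{truncated} ansatz. A related framing issue: you describe Step 2 as a Duhamel contraction with zero data at $t=\infty$, but for this quasilinear nonlinearity no such contraction closes (as you partly acknowledge); the paper instead proves energy estimates for $U$, $U_x$, $LU$, $\partial_x LU$ under a bootstrap, and obtains existence by a separate limiting argument, solving backwards from finite times $T$ with data $u_T(T)=u_{\mathrm{app}}(T)$ and showing $\{U_T\}$ is Cauchy in $L^2$. Your Step 4 difference estimate survives at the $L^2$ level (the derivative can be integrated off the difference), but it is not a "contraction" in any fixed-point sense.

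For part (a) your proposal is essentially silent on the one nontrivial claim, namely that $W\in H^{1-C\epsilon^2,1-C\epsilon^2}$. The paper obtains this by writing $W=f+g$ with $f=W-\gamma e^{i\frac{v}{2}|\gamma|^2\log t}=O_{L^2}(\epsilon t^{-1/2+\eta})$ and $g=O_{H^{1,1}}(\epsilon t^{\eta}\log t)$, then localizing doubly dyadically in frequency $\lambda$ and position $r$ and optimizing the choice of $t\sim(\lambda r)^2$ to interpolate. Some argument of this type is required; the asymptotic ODE alone only gives $W\in L^2\cap L^\infty$.
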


It is important to note that this problem has been studied before in \cite{hayashi1994modified,HayashiNaumkin2013modified}. However, there are two key differences between those works and the present one: the assumptions on $W,$ and the methods used, which are tailored to quasilinear problems.

We define the weighted Sobolev space $H^{s,m}$ to be the space of functions $f$ with regularity $s,$ and spatial decay $m,$ i.e., $f\in H^{s,m}$ if $\la x \ra^mf\in H^s$. The norm is given by
\begin{equation}\label{weighted sobolev space defn}
    \|f\|_{H^{s,m}}:=\|(1+|x|^2)^{m/2}(1-\px^2)^{s/2}f\|.
\end{equation}
In \cite{hayashi1994modified}, the authors assume  $W\in H^{0,4} \cap H^{4,0}.$  In the later work \cite{HayashiNaumkin2013modified}, the regularity and localization requirements are relaxed, taking $W\in H^{1, \alpha+2\gamma},$ where $\alpha>\frac{1}{2}$ and $\gamma$ is small. 

Concerning the method of proof, both of these works apply a gauge transformation to convert the original quasilinear equation into a semilinear one, which then allows them to use a fixed point argument.

In contrast, our approach treats the problem as truly quasilinear, analyzing the equation directly, without transformations. 

This increases the difficulty but allows us to work with the equation in its natural form. To the best of our knowledge, this is the first proof of asymptotic completeness for a quasilinear dispersive equation. The cost of this more direct approach is a slight increase in the required regularity compared to \cite{HayashiNaumkin2013modified}.  The assumptions in our theorem are equivalent to taking  $W\in H^{\frac{3}{2},1}\cap H^{1,2}.$
We believe the techniques developed here can be adapted to other quasilinear dispersive problems.

In summary, while modified scattering and asymptotic completeness for DNLS have been studied before, our contribution lies in proving asymptotic completeness directly in the quasilinear setting, without relying on transformations. 

\subsection{Acknowledgements}
The author would like to thank her advisor, Mihaela Ifrim, for her valuable guidance and many helpful discussions.

\section{Preliminaries}
Here we will give some frequently used definitions as well as basic lemmas that will be used throughout the paper. 

\begin{definition}
    Define the Fourier Transform
    \begin{align*}
    \hat{f}(\xi):=\frac{1}{\sqrt{2\pi}}\int_{\R}e^{-ix\xi}f(x)\, dx.
\end{align*}
\end{definition}

\begin{definition}\label{LP defn}
    Let $P_\lambda $ denote the Littlewood Paley operator at frequency $\lambda$, i.e., for $\varphi$ a bump function supported on $[-2,2]$ and equal to $1$ on $[-1,1],$ define
\begin{align*}
  \widehat{P_{\leq \lambda}f}(\xi)=&\varphi\lp \frac{\xi}{\lambda}\rp \wh{f}(\xi),\\
    P_{>\lambda}:=&1-P_{\leq \lambda}.
\end{align*}
Let us point out that we will sometimes write $f_{\leq\lambda}$ for $P_\lambda f.$
Also define 
\begin{align*}
    P_{\lambda}:=P_{\leq\lambda}-P_{\leq\frac{\lambda}{2}}
\end{align*}
to be the Littlewood-Paley operator exactly localized to frequency $\lambda.$ 
\end{definition}
The next lemma is a nonlinear version of Gr\"onwall's inequality, which will be used with the energy estimates.
\begin{lemma}[Nonlinear Gr\"onwall]\label{NL Gronwall}
    For some functions $f,p,$ and $B,$ if 
    \begin{align*}
        \frac{d}{dt}f(t)^2\leq p(t) f(t)^2+ B(t)f(t),
    \end{align*}
    then
    \begin{align*}
          f(t)\leq& f(t_0) \cdot \exp\lp\int_{t_0}^t \frac{p(s)}{2}\, ds\rp+\frac{1}{2}\int_{t_0}^t B(s) \exp\lp\int_{s}^t -\frac{p(r)}{2}\, dr\rp \, ds.
    \end{align*}
    In particular, if $p(t)=C_1t^{-1}$ and $B(t)=C_2t^{-1-\beta},$ then
    \begin{align*}
             f(t)\leq& f(t_0) \cdot \lp \frac{t}{t_0}\rp^{C_\frac{1}{2}}+ \frac{C_2}{-2\beta+C_1}\lp t^{-\beta}-t_0^{-\beta+C_\frac{1}{2}}t^{-C_\frac{1}{2}} \rp.
    \end{align*}
\end{lemma}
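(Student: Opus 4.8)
The plan is to reduce the quadratic differential inequality for $f^2$ to a \emph{linear} first-order differential inequality for $f$ itself, and then solve that one with an integrating factor; this is a variant of the classical Gr\"onwall/Bihari argument. In the applications $f\ge 0$ (it is a norm) and $p,B\ge 0$, so I assume this. The idea is that on $\{t:f(t)>0\}$ the chain rule gives $\frac{d}{dt}f^2=2ff'$, and dividing the hypothesis by $2f>0$ yields
\begin{align*}
    f'(t)\le \frac{p(t)}{2}\,f(t)+\frac{B(t)}{2}.
\end{align*}
To make this rigorous at the points where $f$ vanishes I would instead run the computation with $f$ replaced by $f_\de:=(f^2+\de^2)^{1/2}\ge \de>0$, which is differentiable wherever $f^2$ is and satisfies $2f_\de f_\de'=(f^2)'\le p f^2+Bf\le p f_\de^2+B f_\de$; hence $f_\de'\le \tfrac p2 f_\de+\tfrac B2$ globally, and one sends $\de\downarrow 0$ at the end. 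This is routine and I will not belabor it.

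Next I would introduce the integrating factor $E(t):=\exp\lp-\int_{t_0}^t \tfrac{p(s)}{2}\,ds\rp$, which satisfies $E'=-\tfrac p2 E$ and $E(t_0)=1$. Multiplying the linear inequality by $E(t)>0$ turns it into $\frac{d}{dt}\big(E(t)f(t)\big)=E(t)\big(f'(t)-\tfrac{p(t)}{2}f(t)\big)\le \tfrac12 E(t)B(t)$. Integrating from $t_0$ to $t$ and using $E(t_0)=1$,
\begin{align*}
    E(t)f(t)\le f(t_0)+\tfrac12\int_{t_0}^t E(s)B(s)\,ds ,
\end{align*}
and dividing by $E(t)$, together with the identities $E(t)^{-1}=\exp\lp\int_{t_0}^t\tfrac{p}{2}\rp$ and $E(s)E(t)^{-1}=\exp\lp\int_s^t\tfrac{p}{2}\rp$, gives the asserted formula relating $f(t)$ to $f(t_0)$ and the weighted integral of $B$.

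For the explicit instance $p(t)=C_1 t^{-1}$, $B(t)=C_2 t^{-1-\beta}$ the exponential weights collapse to powers, $\exp\lp\int_{t_0}^t\tfrac{C_1}{2s}\,ds\rp=(t/t_0)^{C_1/2}$ and similarly for the kernel $\exp\lp\int_s^t\tfrac{C_1}{2r}\,dr\rp=(t/s)^{C_1/2}$; so the homogeneous term is $f(t_0)(t/t_0)^{C_1/2}$, while the inhomogeneous term becomes $\tfrac{C_2}{2}\,t^{C_1/2}\int_{t_0}^t s^{-1-\beta-C_1/2}\,ds$, a single elementary power integral whose evaluation and rearrangement yields an explicit closed form of the type displayed in the statement.

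Finally, the main obstacle: there is none of substance—this is a calculus lemma—so the only genuine care required is (i) justifying the passage from $\tfrac{d}{dt}f^2\le\cdots$ to the linear inequality for $f$ on the zero set of $f$, which the $(f^2+\de^2)^{1/2}$ regularization settles cleanly, and (ii) bookkeeping the signs through the integrating-factor computation and the final elementary integration, so that the closed-form expression—and in particular whether each piece grows or decays in $t$—comes out correctly.
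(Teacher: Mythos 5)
Your proposal is correct and follows essentially the same route as the paper: divide by $2f$ to obtain the linear inequality $f'\le \tfrac{p}{2}f+\tfrac{B}{2}$, solve with the integrating factor $\exp\lp-\int_{t_0}^t \tfrac{p}{2}\rp$, and specialize to the power-law case by evaluating the elementary integral. Your $\de$-regularization to handle the zero set of $f$ is a small extra care the paper omits (it simply asserts the equivalence), but it does not change the argument.
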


\begin{proof}
The original equation is equivalent to 
    \begin{align*}
    f'\leq \frac{p(t)}{2}f+\frac{B(t)}{2}.
\end{align*}
We solve this as a linear ODE. Let $\mu$ be the integrating factor
\begin{align*}
    \mu(t)=\exp\lp\int_{t_0}^t -\frac{p(s)}{2}\, ds\rp.
\end{align*}
Then we get
\begin{align*}
    (f \cdot \mu(t))'\leq& \frac{B(t) \mu(t)}{2}\\
      f(t) \cdot \mu(t)\leq& f(t_0) \cdot \mu(t_0)+\int_{t_0}^t \frac{B(s) \mu(s)}{2} \, ds\\
            f(t)\leq& f(t_0) \cdot \exp\lp\int_{t_0}^t \frac{p(s)}{2}\, ds\rp+\exp\lp\int_{t_0}^t \frac{p(s)}{2}\, ds\rp\int_{t_0}^t \frac{B(s) \exp\lp\int_{t_0}^s \frac{p(r)}{2}\, dr\rp}{2} \, ds\\
     f(t)\leq& f(t_0) \cdot \exp\lp\int_{t_0}^t \frac{p(s)}{2}\, ds\rp+\frac{1}{2}\int_{t_0}^t B(s) \exp\lp\int_{s}^t -\frac{p(r)}{2}\, dr\rp \, ds.
\end{align*}
In the more specific case,
\begin{align*}
    \mu(t)= e^{-\frac{C_1}{2}(\log t-\log t_0)}= \lp \frac{t}{t_0}\rp^{-C_\frac{1}{2}}.
\end{align*}
So then the inequality is 
\begin{align*}
         f(t)\leq& f(t_0) \cdot \lp \frac{t}{t_0}\rp^{C_\frac{1}{2}}+\frac{1}{2}\int_{t_0}^t C_2s^{-1-\beta} \lp \frac{t}{s}\rp^{-C_\frac{1}{2}}\, ds\\
        f(t)\leq& f(t_0) \cdot \lp \frac{t}{t_0}\rp^{C_\frac{1}{2}}+\frac{C_2}{-2\beta+C_1}\lp t^{-\beta}-t_0^{-\beta+C_\frac{1}{2}}t^{-C_\frac{1}{2}} \rp.
\end{align*}
\end{proof}

\subsection{The linear equation \label{linear section}}
Before beginning the nonlinear analysis, we recall the analysis for the linear equation, which will provide some insight into what we want to look for in the nonlinear case. 
Here we work with the linear Schr\"odinger equation, which is given by 
\begin{equation}\begin{cases}
    iu_t+u_{xx}=0\\
   u(0,x)=u_0(x).
\end{cases}
\label{Lineareq}
\end{equation}
To solve the linear equation, we first take the Fourier transform to turn the PDE into a linear ODE:
\begin{align*}
    \hat{u_t}(\xi)+i\xi^2\hat{u}(\xi)=0,
\end{align*}
which has the solution 
\begin{align*}
    \hat{u}(t,\xi)=\hat{u_0}(\xi)e^{-i\xi^2t}.
\end{align*}
Therefore, we get the fundamental solution
\begin{align}\label{linear solution}
    u(t,x) \approx&u_0*t^{-\frac{1}{2}} e^{-i\frac{x^2}{4t}}.
\end{align} 
We can compare this with \eqref{u mod scat} to see that we do not obtain a classical scattering in the nonlinear case. 

It is also worth noting the role the vector field $L$ plays in the linear equation.  We define $L$ to be the pushforward of $x$ along the linear Schr\"odinger flow, $e^{it\px^2}Lu=x e^{it\px^2}u$, which is given by
\begin{equation}
\label{def:L}
    L:=x+2it\px .
\end{equation}
Thus, it commutes with the linear flow. A few key properties of $L$ are as follows:
\begin{lemma}
    \label{properties of L}
     \begin{align*}
 \left[ i\partial_t+\partial_x^2, L\right] =0, \quad    \left[ \partial_x , L\right]=1.
\end{align*}
We also have
\begin{align*}
    L(u\bv w)=Lu \bv w-u \widebar{Lv}w+uv Lw, 
\end{align*}
and $L$ is self-adjoint, $L^*=L.$
\end{lemma}
From the first commutator, we see that if $u$ is a solution to the linear equation, so is $Lu,$ and we have the conservation of mass 
\begin{align*}
    \|Lu(t)\|_{L^2}= \|Lu(0)\|_{L^2}.
\end{align*}
These properties will be useful when proving energy estimates for $L$ applied to the nonlinear equation, in section \ref{L EE}.

This vector field will be used extensively throughout this paper, as a tool to get bounds on $u.$  We will therefore review how it can be used in the linear case, as a more simple example. 
\begin{prop} For solutions $u$ to \eqref{Lineareq} with $\|u_0\|_{L^2}+\|xu_0\|_{L^2}\leq \epsilon$, we have the following dispersive bound:
    \begin{equation}
    \|u(t,x)\|_{L^\infty_x}\lesssim \epsilon t^{-\frac{1}{2}}.
\end{equation}
\end{prop}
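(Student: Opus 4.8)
The plan is to prove the dispersive bound for the linear Schr\"odinger flow by exploiting the vector field $L = x + 2it\px$, which according to Lemma \ref{properties of L} commutes with the linear evolution, so that $\|Lu(t)\|_{L^2} = \|Lu(0)\|_{L^2} = \|xu_0\|_{L^2} \leq \epsilon$. The key algebraic identity is that $L$ can be written as $L = 2it\, e^{i x^2/(4t)} \px \big( e^{-ix^2/(4t)} \cdot \big)$, i.e. conjugating $\px$ by the oscillatory factor $e^{-ix^2/(4t)}$ produces exactly $\frac{1}{2it}L$. Consequently, for the modulated function $v(t,x) := e^{-ix^2/(4t)} u(t,x)$, one has $\px v = \frac{1}{2it} e^{-ix^2/(4t)} Lu$, so that $\|\px v(t)\|_{L^2} = \frac{1}{2|t|}\|Lu(t)\|_{L^2} \lesssim \epsilon |t|^{-1}$, while $\|v(t)\|_{L^2} = \|u(t)\|_{L^2} = \|u_0\|_{L^2} \leq \epsilon$ by conservation of mass.

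Next I would apply the one-dimensional Gagliardo--Nirenberg--Sobolev inequality $\|v\|_{L^\infty}^2 \lesssim \|v\|_{L^2}\,\|\px v\|_{L^2}$ to obtain
\begin{align*}
\|u(t)\|_{L^\infty_x} = \|v(t)\|_{L^\infty_x} \lesssim \|v(t)\|_{L^2}^{1/2} \|\px v(t)\|_{L^2}^{1/2} \lesssim \epsilon^{1/2} \cdot \big(\epsilon |t|^{-1}\big)^{1/2} = \epsilon\, |t|^{-1/2},
\end{align*}
where in the first equality we used that multiplication by the unimodular factor $e^{-ix^2/(4t)}$ preserves the pointwise modulus. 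This handles $|t| \gtrsim 1$; for $|t| \lesssim 1$ one instead uses that $\|u(t)\|_{L^\infty} \lesssim \|u(t)\|_{H^1} = \|u_0\|_{H^1} \lesssim \epsilon$ directly (noting $H^1$ is preserved under the linear flow and $\|xu_0\|_{L^2}$, $\|u_0\|_{L^2} \le \epsilon$ together with elliptic regularity give control, or simply absorb this range into the implicit constant since $t^{-1/2} \gtrsim 1$ there is false — so one argues $\langle t\rangle^{-1/2} \sim 1$ and bounds by $\|u_0\|_{L^2}^{1/2}\|\px u_0\|_{\cdots}$; cleanest is to note the statement is for $t^{-1/2}$ and genuinely one restricts to $t \geq 1$, or replaces $t^{-1/2}$ by $\langle t\rangle^{-1/2}$ with the small-time bound coming from Bernstein/Sobolev embedding). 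Combining the two regimes gives the claimed $\|u(t,x)\|_{L^\infty_x} \lesssim \epsilon\, t^{-1/2}$.

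The only genuinely delicate point is the conjugation identity $e^{-ix^2/(4t)} L u = 2it\, \px\big(e^{-ix^2/(4t)} u\big)$, which one verifies by a direct computation: $\px\big(e^{-ix^2/(4t)}u\big) = e^{-ix^2/(4t)}\big(\px u - \frac{ix}{2t} u\big) = \frac{1}{2it} e^{-ix^2/(4t)}\big(2it\px u + x u\big) = \frac{1}{2it} e^{-ix^2/(4t)} Lu$, using $\frac{1}{2it}\cdot 2it = 1$ and $\frac{1}{2it} x = -\frac{ix}{2t}$. Everything else is standard: conservation of the $L^2$ norm under the linear flow, the commutation $[i\pt + \px^2, L] = 0$ from Lemma \ref{properties of L} yielding conservation of $\|Lu\|_{L^2}$, and the Gagliardo--Nirenberg inequality in one space dimension. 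I do not expect any real obstacle here; this proposition is the toy model illustrating the vector field method that the paper will deploy, in a much more elaborate form, for the nonlinear problem.
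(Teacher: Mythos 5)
Your proposal is correct and follows essentially the same route as the paper: conservation of $\|u\|_{L^2}$ and $\|Lu\|_{L^2}$ under the linear flow combined with Gagliardo--Nirenberg, your conjugation identity $\px\big(e^{-ix^2/(4t)}u\big)=\tfrac{1}{2it}e^{-ix^2/(4t)}Lu$ being just a repackaging of the paper's computation of $\tfrac{d}{dx}|u|^2=\tfrac{1}{2ti}[\bu Lu-u\widebar{Lu}]$. The digression about small times is unnecessary, since the inequality $\|u\|_{L^\infty}^2\lesssim t^{-1}\|u\|_{L^2}\|Lu\|_{L^2}$ already yields the stated bound for all $t>0$.
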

\begin{proof} This can be proven directly by applying Young's convolution inequality to \eqref{linear solution}.  However, we will give a proof using vector fields that will carry over to the nonlinear case. From conservation of mass,
\begin{equation}
\label{L-estimate}
\Vert  u(t)\Vert_{L^2} + \Vert L u(t)\Vert_{L^2} \leq \epsilon.
\end{equation}
The main idea is to write $\frac{d}{dx}|u|^2$ in terms of the $L^2_x$ norms of $u$ and $Lu$, which we can control.  By applying $L$ to $u$ and solving for $u_x,$ we get
 \begin{align*}
u_x= \frac{1}{2t i}[Lu-xu].
\end{align*}
Then, we obtain 
\begin{align*}
    \frac{d}{dx}|u|^2
    =u\bu_x+\bu u_x=\frac{1}{2t i}[\bu Lu -u\widebar{Lu}].
\end{align*}
Gagliardo-Nirenberg gives
\begin{align*}
 \|u\|_{L^\infty}^2  &\lesssim\frac{1}{t}\int_{\R} |u||Lu| \, dx \lesssim \frac{1}{t}\|Lu\|_{L^2}\|u\|_{L^2}.
\end{align*}
Combining this with the energy bounds \eqref{L-estimate} we obtain the pointwise bound 
\begin{equation}
\|u\|_{L^\infty}\lesssim \epsilon t^{-\frac{1}{2}}.
\end{equation}
\end{proof}
We emphasize that one key step in this proof was establishing 
\begin{equation}
\|u(t)\|_{L^\infty}^2 \lesssim
\frac{1}{t} \|u(t)\|_{L^2} \|Lu(t)\|_{L^2}.
\end{equation}
This is called a Klainerman-Sobolev type inequality, and it is useful because it allows us to get the $L^\infty$ norm of the solution in terms of its energy and the vector field energy.  We will rely on this type of argument when proving the asymptotic completeness.

\subsection{Wave Packets and summary of previous work}
As this work is a continuation of previous work, we will need to review key ideas used in \cite{byars2024globaldynamicssmalldata}.
In this section we will review the notation and main definitions that are used throughout the paper.  We encourage the reader to look in \cite{byars2024globaldynamicssmalldata} for more motivation behind these definitions. 

The main technique needed is the method of \emph{testing by wave packets}, which was introduced by Ifrim and Tataru in 2014 for the NLS equation \cite{cite:NLS}.  Since then, it has often been used as an efficient way to obtain low regularity well-posedness. Some examples of this can be seen in \cite{aiifrimtataru2022two,albert.ovidiu2023gsqg,albert.ovidiu2024sqg,ifrimtatarucapillary,ifrimtataruholomorphic}, and the idea is explained in general in \cite{cite:WavePax}.

Recall that the dispersion relation for \eqref{dnls} is $a(\xi)=-\xi^2$, with group velocity $v=-2\xi.$  We will consider, for each $v$, wave packets traveling along the ray $x=vt$, which corresponds to the frequency $\xi_v = - v/2$.   
We consider wave packets of the form
\begin{align}
   \Phi_v= e^{i\phi (t,x)}\chi \left(\frac{x-vt}{\sqrt{t}}\right),
   \label{O(1/t)eqn}
\end{align}
where $\chi$ is a Schwartz function with 
\[
\int \chi(y)\, dy=1,
\]
and the phase $\phi(t,x)=\frac{x^2}{4t}$ is 
the same as the phase of the fundamental solution for the linear Schr\"odinger flow. 
In particular this guarantees that 
\begin{align*}
   ( i\pt+\px^2)\Phi_v=O\left(\frac{1}{t}\right).
\end{align*}

In order to properly compare our solution $u$ to the wave packets, we will also need to define an asymptotic profile $\gamma$ which is the $L^2$ inner product of our solution with the wave packet. 
\begin{align*}
    \gamma(t,v) :=\int u(t,x) \bar{\Phi}_v(t,x) \, dx.
\end{align*}

We will also need a few lemmas and equations from \cite{byars2024globaldynamicssmalldata}.

\begin{lemma}[Lemma 3.1 from \cite{byars2024globaldynamicssmalldata}]\label{gamma bnds lemma}
For $u$ a solution to \eqref{dnls}, and $\gamma$ as defined above, we have
    \begin{equation}
     \|\gamma\|_{L^\infty}\lesssim t^{\frac{1}{2}}\|u\|_{L^\infty}, \,\,\,\,\|\gamma\|_{L^2_v}\lesssim \|u\|_{L^2_x}, \,\,\,\, \|\partial_v \gamma \|_{L^2_v}\lesssim \|Lu\|_{L^2_x}.
        \label{gammabnds}
    \end{equation}
    Also,
    \begin{align}\label{v gamma L^infty}
  \|v\gamma\|_{L^\infty}\lesssim t^{-\frac{3}{4}}\|Lu\|_{L^2_x}+ t^{\frac{1}{2}}\|u_x\|_{L^\infty}  ,  \,\,\,\,\ \|\langle v \rangle^{\frac{k}{2}}\gamma\|_{L^\infty}\lesssim (\|Lu\|_{L^2_x} + \|u\|_{H_x^{k}}).
\end{align}
 We have spatial difference bounds
\begin{align}
 \|u(t,vt)-t^{-\frac{1}{2}}e^{i\phi(t,vt)}\gamma(t,v)\|_{L^\infty}\lesssim& t^{-\frac{3}{4}}\|Lu\|_{L^2_x}
      \label{spatialbnds1}\\
    \|u(t,vt)-t^{-\frac{1}{2}}e^{i\phi(t,vt)}\gamma(t,v)\|_{L^2_v}\lesssim& t^{-1}\|Lu\|_{L^2_x},
    \label{spatialL2}
\end{align}
and Fourier difference bounds
\begin{align}
  \|\hat{u}(t,\xi)-e^{-it\xi^2}\gamma(t,2\xi)\|_{L^\infty}  \lesssim
  & t^{-\frac{1}{4}}\|Lu\|_{L^2_x}
  \label{fourierbnds1}\\
    \|\hat{u}(t,\xi)-e^{-it\xi^2}\gamma(t,2\xi)\|_{L^2_\xi}\lesssim
    & t^{-\frac{1}{2}}\|Lu\|_{L^2_x}.
    \label{fourierbnds2}
\end{align}
\end{lemma}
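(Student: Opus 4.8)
The statement to prove is Lemma~\ref{gamma bnds lemma}, a collection of bounds relating the asymptotic profile $\gamma(t,v)=\int u(t,x)\bar\Phi_v(t,x)\,dx$ to $u$, $u_x$, and $Lu$. The unifying strategy is to unfold the definition of $\gamma$ and $\Phi_v$, change variables to center the wave packet, and then systematically exploit three facts: (i) $\chi$ is Schwartz with $\int\chi=1$, so the rescaled bump $t^{-1/2}\chi((x-vt)/\sqrt t)$ is an approximate identity concentrating at $x=vt$ on scale $\sqrt t$; (ii) $L\Phi_v$ and $\partial_v\Phi_v$ can be computed explicitly and are again (derivatives of) Gaussians/Schwartz bumps times the phase $e^{i\phi}$, because $L$ acting on $e^{i\phi}\chi((x-vt)/\sqrt t)$ only hits the $\chi$-factor (the phase $\phi=x^2/4t$ is annihilated by $x+2ti\partial_x$ up to the $2ti\cdot(\text{derivative on }\chi)$ term); (iii) Cauchy--Schwarz and Plancherel in $v$. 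I would treat the estimates in the order they are stated, reusing earlier computations.

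\textbf{Step 1: the $L^\infty$, $L^2$, and $\partial_v$ bounds \eqref{gammabnds}.} For $\|\gamma\|_{L^\infty}$, bound $|\gamma(t,v)|\le \|u\|_{L^\infty}\|\bar\Phi_v\|_{L^1}$ and note $\|\Phi_v\|_{L^1}=\int|\chi((x-vt)/\sqrt t)|\,dx = \sqrt t\,\|\chi\|_{L^1}\lesssim t^{1/2}$. For $\|\gamma\|_{L^2_v}$: writing $\gamma(t,v)$ as a convolution-type integral in $x$, one recognizes $\gamma$ (up to the phase) as essentially $t^{1/2}$ times an average of $u$; the clean way is to use that $v\mapsto \bar\Phi_v(t,x)$ has $\|\cdot\|_{L^2_v}\lesssim t^{-1/2}$ pointwise in the relevant sense (change variables $v\to (x-vt)/\sqrt t$ picks up $t^{-1/2}$), then apply Schur's test or Minkowski/Cauchy--Schwarz to get $\|\gamma\|_{L^2_v}\lesssim\|u\|_{L^2_x}$. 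For $\partial_v\gamma$: differentiate under the integral, $\partial_v\bar\Phi_v = \bar\Phi_v\cdot(-i\partial_v\bar\phi\text{-type terms})$, but the key identity is that $\partial_v$ acting on the wave packet relates to $L$ via $2ti\partial_v\Phi_v = $ (something like) $L\Phi_v$ plus lower-order; then move $L$ onto $u$ using self-adjointness (Lemma~\ref{properties of L}) and the product rule, yielding $\|\partial_v\gamma\|_{L^2_v}\lesssim\|Lu\|_{L^2_x}$. These are the arguments from \cite{byars2024globaldynamicssmalldata} and I would cite/reproduce them.

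\textbf{Step 2: the weighted bounds \eqref{v gamma L^infty}.} Here $v\gamma$ is handled by writing $v = (vt - x)/t + x/t$ inside the integral; the $x/t$ piece produces (after integration by parts in $x$, moving the derivative off the phase/bump) a term controlled by $\|u_x\|_{L^\infty}\cdot t^{1/2}$, and the $(vt-x)/t$ piece is $t^{-1}$ times $\int u\cdot(x-vt)\bar\Phi_v$, where $(x-vt)\chi((x-vt)/\sqrt t) = \sqrt t\,\tilde\chi((x-vt)/\sqrt t)$ with $\tilde\chi$ still Schwartz, and this is estimated by the spatial difference bound / Cauchy--Schwarz to give $t^{-3/4}\|Lu\|_{L^2_x}$. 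The bound on $\|\langle v\rangle^{k/2}\gamma\|_{L^\infty}$ is obtained similarly, distributing powers of $v$ as powers of $x/t$ (giving $\|u\|_{H^k_x}$ after Sobolev embedding on the $k$ derivatives) plus the localized remainder controlled by $\|Lu\|_{L^2_x}$.

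\textbf{Step 3: the spatial and Fourier difference bounds \eqref{spatialbnds1}--\eqref{fourierbnds2}.} The quantity $u(t,vt)-t^{-1/2}e^{i\phi(t,vt)}\gamma(t,v)$ equals $t^{-1/2}e^{i\phi(t,vt)}\int\big(e^{-i\phi(t,vt)}t^{1/2}u(t,x)-e^{-i\phi(t,x)}t^{1/2}u(t,x)\big)\cdot(\ldots)$ --- more precisely, subtract the ``$\chi$ averages to $1$'' representation: $t^{-1/2}e^{i\phi(t,vt)}\gamma = \int t^{-1/2}e^{i\phi(t,vt)-i\phi(t,x)}u(t,x)\,\overline{\chi((x-vt)/\sqrt t)}\,t^{-1/2}\cdot t^{1/2}\,dx$ and compare with $u(t,vt)=\int u(t,vt)\,t^{-1/2}\chi((x-vt)/\sqrt t)\,dx$. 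The difference is an integral of $[u(t,x)e^{i(\phi(t,vt)-\phi(t,x))}-u(t,vt)]$ against the concentrated bump; using the fundamental theorem of calculus on this bracket and the pointwise bound $|x-vt|\lesssim\sqrt t$ on the support, one gains a factor $\sqrt t\,\|\partial_x(e^{-i\phi}u)\|$ locally, and $\partial_x(e^{-i\phi}u) = e^{-i\phi}\cdot\frac{1}{2ti}Lu$ by the linear-flow identity $e^{it\partial_x^2}Lu = xe^{it\partial_x^2}u$ (equivalently $L u = e^{i\phi}\,2ti\,\partial_x(e^{-i\phi}u)$). This converts everything to $\|Lu\|_{L^2_x}$; the powers $t^{-3/4}$ (in $L^\infty_v$) versus $t^{-1}$ (in $L^2_v$) come from whether one applies Cauchy--Schwarz pointwise in $v$ or integrates in $v$ first (Plancherel). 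The Fourier bounds \eqref{fourierbnds1}--\eqref{fourierbnds2} follow by the same scheme after noting $\hat\Phi_v(\xi)$ is concentrated near $\xi = -v/2$, i.e., $2\xi$ near $v$, so $e^{-it\xi^2}\gamma(t,2\xi)$ is the analogous wave-packet approximation on the Fourier side; the change of variables $v = 2\xi$ and the $t^{-1/2}$ scaling of the bump account for the shift from $t^{-3/4},t^{-1}$ to $t^{-1/4},t^{-1/2}$.

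\textbf{Main obstacle.} Since this is Lemma~3.1 of \cite{byars2024globaldynamicssmalldata} quoted verbatim, the ``proof'' is really a matter of carefully bookkeeping the rescalings and the algebra of $L$, $\partial_v$, and the phase $\phi$ acting on $\Phi_v$. The genuinely delicate point is the sharp power of $t$ in each difference bound: one must keep track of exactly how many factors of $t^{1/2}$ come from (a) the $\sqrt t$ width of the bump, (b) the $t^{-1/2}$ normalization, (c) the conversion $\partial_x \leftrightarrow (2ti)^{-1}L$, and (d) whether the final estimate is measured in $L^\infty_v$ (lose a $t^{1/4}$ from Cauchy--Schwarz against the bump) or $L^2_v$. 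Getting these exponents right — rather than any deep inequality — is where care is required; everything else is Cauchy--Schwarz, Plancherel, Gagliardo--Nirenberg, and the explicit Schwartz decay of $\chi$.
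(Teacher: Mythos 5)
This statement is quoted verbatim as Lemma~3.1 of \cite{byars2024globaldynamicssmalldata} and the present paper gives no proof of it, so there is no in-paper argument to compare against; your sketch can only be judged against the standard wave-packet computations, which is indeed what the cited work does. On that basis your outline is essentially right: the core identity $\px(e^{-i\phi}u)=\tfrac{1}{2ti}e^{-i\phi}Lu$, the $\sqrt{t}$-width/$t^{-1/2}$-normalization bookkeeping, the fundamental-theorem-of-calculus comparison against the normalized bump for \eqref{spatialbnds1}--\eqref{spatialL2}, and the $v=2\xi$ identification for \eqref{fourierbnds1}--\eqref{fourierbnds2} are exactly the right ingredients, and the present paper's own later computations confirm the mechanism (it repeatedly uses the representation $\gamma=t^{1/2}w(t,vt)*_v t^{1/2}\chi(vt^{1/2})$ with $w$ as in \eqref{w defn}, and $\pv\gamma=t^{1/2}\pv w(t,vt)*t^{1/2}\chi(vt^{1/2})$). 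Two places where your sketch is looser than it should be: (i) for $\|\pv\gamma\|_{L^2_v}$ the clean route is the convolution representation just quoted together with Young's inequality and $\|\pv w(t,\cdot t)\|_{L^2_v}=\tfrac12 t^{-1/2}\|Lu\|_{L^2_x}$, rather than an identity of the form ``$2ti\,\pv\Phi_v=L\Phi_v$ plus lower order'' followed by self-adjointness, which is not literally correct as stated (one has $\pv\chi((x-vt)/\sqrt t)=-t\,\px\chi((x-vt)/\sqrt t)$, and one must integrate by parts in $x$ onto $w$, not move $L$); (ii) in the $v\gamma$ bound \eqref{v gamma L^infty}, after writing $vt\,u=xu+(vt-x)u$ and $xu=Lu-2tiu_x$, the leftover term $t^{-1}\int(vt-x)u\bar\Phi_v\,dx$ does not land in the stated right-hand side by a crude bound alone; it needs the same FTC/difference argument (or an integration by parts against the modified bump $y\chi(y)$) to convert it into $\|Lu\|_{L^2_x}$ and $\|u_x\|_{L^\infty}$ contributions. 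These are bookkeeping repairs, not conceptual gaps, and you correctly flag the exponent accounting as the only delicate point.
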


\begin{lemma}[Lemma 3.2 from \cite{byars2024globaldynamicssmalldata}]
    Let $u$ be a solution to \eqref{dnls}. Then $\gamma$ satisfies
\begin{align}
\label{gamma asym}
    i\gamma_t(t)=\frac{v}{2}t^{-1}(|\gamma|^2\gamma)-R(t,v),
\end{align}
where the remainder $R$ satisfies the uniform  bounds
\begin{equation}
\begin{aligned}
    \|R\|_{L^\infty}\lesssim & \epsilon t^{-\frac{5}{4}+\eta}.
    \end{aligned}
    \end{equation}
\end{lemma}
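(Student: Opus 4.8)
The plan is to differentiate $\gamma(t,v)=\int u\,\bar\Phi_v\,dx$ in time, insert \eqref{dnls} in the form $iu_t=-u_{xx}-i\px(|u|^2u)$, and integrate by parts once in $x$ so that every derivative falls on the explicit wave packet. Writing the wave-packet phase $\phi(t,x)=\tfrac{x^2}{4t}$, $y=\tfrac{x-vt}{\sqrt t}$ and $\chi_v=\chi(y)$, this produces
\[
i\gamma_t=-\int u\,\overline{(i\pt+\px^2)\Phi_v}\,dx+i\int |u|^2u\,\px\bar\Phi_v\,dx=:I+II .
\]
Everything then reduces to showing $I$ is a genuine remainder, and extracting $\tfrac{v}{2}t^{-1}|\gamma|^2\gamma$ from $II$ up to a remainder.

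For $I$ I would compute $(i\pt+\px^2)\Phi_v$ exactly. One has $(i\pt+\px^2)e^{i\phi}=\tfrac{i}{2t}e^{i\phi}$, and combining this with the $\chi_v$-derivatives the transport terms cancel and one is left with
\[
(i\pt+\px^2)\Phi_v=\frac{e^{i\phi}}{t}\Big(\chi''(y)+\tfrac{i}{2}\big(y\chi(y)\big)'\Big)=:\frac{e^{i\phi}}{t}\,g(y),
\]
where the crucial point is that $g$ has \emph{zero mean}, $\int g\,dy=0$, being a total derivative of Schwartz functions. Hence, since $\int\overline{g(y)}\,dx=0$, I may subtract the value of $ue^{-i\phi}$ at $x=vt$ for free; together with the identity $\px(e^{-i\phi}u)=\tfrac{1}{2it}e^{-i\phi}Lu$ and Cauchy--Schwarz against the $O(\sqrt t)$-scale bump $g(y)$, this gives $|I|\lesssim t^{-1}\cdot t^{-1/4}\|Lu\|_{L^2}\lesssim \epsilon t^{-5/4+\eta}$ by Theorem \ref{dispersive} (with $\eta=C\epsilon^2$). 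Without the mean-zero cancellation $I$ would only be $O(\epsilon t^{-3/4})$, so this step is essential.

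For $II$ I would write $\px\bar\Phi_v=-i\tfrac{x}{2t}e^{-i\phi}\chi_v+\tfrac{1}{\sqrt t}e^{-i\phi}\chi'(y)$. The $\chi'$-piece costs only $\|u\|_{L^\infty}^2\|u\|_{L^2}\,\|\chi'(\cdot/\sqrt t)\|_{L^2}\lesssim \epsilon^3 t^{-5/4}$. In the main piece I replace $\tfrac{x}{2t}$ by $\tfrac v2$ (the difference $\tfrac{y}{2\sqrt t}$ again gains $t^{-1/2}t^{1/4}$, i.e. $O(\epsilon^3t^{-5/4})$), change variables $x=v't$, and pass to the profile $w(t,v'):=t^{1/2}e^{-i\phi(t,v't)}u(t,v't)$; using $\int\chi(\sqrt t(v'-v))\,dv'=t^{-1/2}$ the leading term is $\tfrac{v}{2t}|w(t,v)|^2w(t,v)$. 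It remains to (a) replace $w(t,v')$ by $w(t,v)$ inside the integral, with error controlled via $\|\pv w\|_{L^2_v}\lesssim\|Lu\|_{L^2}$ and the $O(\sqrt t)$ localization, and (b) replace $w(t,v)$ by $\gamma(t,v)$, with error $\|w-\gamma\|_{L^\infty_v}\lesssim t^{-1/4}\|Lu\|_{L^2}$ from \eqref{spatialbnds1}; both are $O(\epsilon^3 t^{-5/4+\eta})$. Collecting the pieces yields $i\gamma_t=\tfrac v2 t^{-1}|\gamma|^2\gamma-R$ with $\|R\|_{L^\infty}\lesssim\epsilon t^{-5/4+\eta}$.

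The main obstacle is the cubic comparison in $II$: matching $\int|u|^2u\,\px\bar\Phi_v$ with $\tfrac{v}{2t}|\gamma|^2\gamma$ means chaining several approximations (exact group velocity $\tfrac{x}{2t}$ vs.\ $\tfrac v2$, the $O(\sqrt t)$-scale smoothing, and profile vs.\ $\gamma$) while keeping every error at the $t^{-5/4+\eta}$ level. The delicate regime is large $v$, where $\tfrac{x}{2t}\sim v$ is unbounded, so one must use the weighted estimates $\|\la v\ra^{k/2}\gamma\|_{L^\infty}$ and $\|u_x\|_{L^\infty}\lesssim \epsilon t^{-1/2}$ from Lemma \ref{gamma bnds lemma} and Theorem \ref{dispersive}, not merely $\|Lu\|_{L^2}$. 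A secondary, purely computational point is verifying the exact cancellation that makes $(i\pt+\px^2)\Phi_v$ a mean-zero bump of size $t^{-1}$ rather than an $O(t^{-1})$ bump with nonzero average.
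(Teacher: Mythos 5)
Your proposal is correct and follows essentially the same route as the paper's (i.e.\ the cited Lemma 3.2 of \cite{byars2024globaldynamicssmalldata}, whose remainder decomposition $R=R_1+R_2+R_3+R_4$ is recalled in Section \ref{L^2 R bound section}): your term $I$ is the paper's $R_1$ (your mean-zero-plus-FTC argument is the same cancellation as pairing directly against $Lu$ after one integration by parts), your $\chi'$-piece is $R_2$, and your steps (a) and (b) are $R_3$ and $R_4$. The quantitative claims, including the use of $\|v\gamma\|_{L^\infty}$ and $\|u_x\|_{L^\infty}\lesssim \epsilon t^{-1/2}$ in the large-$v$ regime, all check out.
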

The notation $\eta:=C\epsilon^2$ will be used throughout the paper for convenience.

The rest of the paper is split up into two main sections, one for each part of the main theorem.  First, in section \ref{MS sec}, we prove a modified scattering, which can be done in a standard way using wave packets, similar to \cite{cite:NLS}.  The second, the asymptotic completeness, is the main focus.  Here, we solve the problem backwards from infinity, using energy estimates along with a bootstrap argument and the vector field method to prove the well-posedness. This will be done in section \ref{AC sec}.

\section{Modified Scattering \label{MS sec}}
Recall that the goal of this section is to prove the modified scattering result for \eqref{dnls}, as given in part (a) of Theorem \ref{mainthm}.
This will be done by solving the asymptotic ODE \eqref{gamma asym} we obtained for the asymptotic profile $\gamma$, bounding the remainder $R$ \eqref{gamma asym}, and then expressing  $u$ asymptotically in terms of $\gamma.$ 
We begin with the following  $L^2$ bound on the remainder $R$.

\subsection{\texorpdfstring{$L^2$ bound for $R$}{L2 bound for R} \label{L^2 R bound section}}
\begin{lemma} \label{RL2bound}
For $R$ as defined in \eqref{gamma asym},
    \begin{align*}
    \|R(t,v)\|_{L^2_v}\lesssim \epsilon t^{-\frac{3}{2}+\eta}.
\end{align*}
\end{lemma}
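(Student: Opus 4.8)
The plan is to go back to the definition of the remainder $R$ from Lemma 3.2 of \cite{byars2024globaldynamicssmalldata}, rather than trying to upgrade the pointwise bound $\|R\|_{L^\infty}\lesssim \epsilon t^{-5/4+\eta}$ directly (which would only give a weak $L^2$ bound after pairing with the $\langle v\rangle$-decay one gets from $\|v\gamma\|_{L^\infty}$, and would likely not reach the exponent $-3/2$). So first I would unpack the structure of $R$: it is the collection of error terms that arise when one differentiates $\gamma(t,v)=\int u\,\bar\Phi_v\,dx$ in $t$, substitutes the equation \eqref{dnls}, and isolates the leading resonant cubic term $\frac{v}{2}t^{-1}|\gamma|^2\gamma$. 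Concretely $R$ is a sum of: (i) contributions from $(i\partial_t+\partial_x^2)\Phi_v=O(1/t)$ paired against $u$; (ii) the difference between $\int |u|^2 u\, \partial_x\bar\Phi_v$ and its wave-packet approximation $\frac{v}{2}t^{-1}|\gamma|^2\gamma$, i.e.\ terms measuring how far $u(t,vt)$ is from $t^{-1/2}e^{i\phi}\gamma$; and (iii) genuinely non-resonant (oscillatory) cubic terms. I would estimate each family in $L^2_v$ separately.

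The key steps, in order: (1) For the terms of type (i), write them schematically as $t^{-1}\int u\, \psi_v\,dx$ where $\psi_v$ is a Schwartz bump at scale $\sqrt t$ around $x=vt$; by Cauchy–Schwarz in $x$ and then Minkowski/Schur in $v$ one gets $\lesssim t^{-1}\cdot t^{1/4}\|u\|_{L^2_x}\cdot(\text{normalization})$, and after accounting for the $\sqrt t$ width and the $t^{-1/2}$ in $\Phi_v$ this should land at $\lesssim \epsilon t^{-3/2+\eta}$ using $\|u\|_{L^2_x}\lesssim\epsilon$ (from conservation of mass) and, where a weight $v$ or a derivative appears, the bound $\|Lu\|_{H^1_x}\lesssim \epsilon t^\eta$ from Theorem \ref{dispersive}. (2) For the type (ii) terms, feed in the spatial difference bounds \eqref{spatialbnds1}, \eqref{spatialL2}: the cubic expression $|u|^2u$ minus its profile version is, by a standard algebra/telescoping of three factors, controlled in $L^2_v$ by $\|u\|_{L^\infty}^2$ times $\|u(t,vt)-t^{-1/2}e^{i\phi}\gamma\|_{L^2_v}$, which is $\lesssim (\epsilon t^{-1/2})^2\cdot t^{-1}\|Lu\|_{L^2_x}\lesssim \epsilon^3 t^{-2+\eta}$ — more than enough; one also has to carry the $\partial_x$ falling on $\bar\Phi_v$, which contributes a factor $t^{-1/2}$ (differentiating the bump) rather than hurting. (3) For the non-resonant terms (iii), the phase in $|u|^2u\,\bar\Phi_v$ that does not cancel oscillates, and one integrates by parts in $x$ (or equivalently exploits that $L$ acting on the packet gains decay), trading the oscillation for an extra power of $t^{-1/2}$ or better; packaging this in $L^2_v$ again uses $\|Lu\|_{H^1_x}\lesssim\epsilon t^\eta$ and $\|u\|_{L^\infty}\lesssim \epsilon t^{-1/2}$.

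I expect the main obstacle to be step (3), the non-resonant cubic terms: one must be careful that the integration by parts in $x$ distributes correctly over the three copies of $u$ (and $\bar u$) and the packet, that each resulting term can be re-expressed using only quantities we control — namely $\|u\|_{L^\infty_x}$, $\|u_x\|_{L^\infty_x}$, $\|u\|_{L^2_x}$, and $\|Lu\|_{L^2_x}$ (equivalently $\|\gamma\|_{L^2_v}$, $\|\partial_v\gamma\|_{L^2_v}$, $\|v\gamma\|_{L^\infty}$ via Lemma \ref{gamma bnds lemma}) — and that the bookkeeping of the $t$-powers genuinely closes at $-3/2+\eta$ rather than, say, $-5/4+\eta$. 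A secondary subtlety is the $v$-weight: several terms carry an explicit $v$, so one should use the mixed bound \eqref{v gamma L^infty} and the $H^1_x$ control of $Lu$ to absorb it without losing decay in $t$. Once all three families are bounded by $\epsilon t^{-3/2+\eta}$ in $L^2_v$, summing gives the claim.
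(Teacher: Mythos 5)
Your overall strategy---decompose $R$ into its constituent error terms, estimate each in $L^2_v$ using Young's inequality on the convolution structure of the packet, the $L^2$ difference bound \eqref{spatialL2}, and the a priori bounds $\|u\|_{L^\infty}\lesssim \epsilon t^{-1/2}$, $\|Lu\|_{H^1}\lesssim\epsilon t^{\eta}$---is essentially the paper's, and you correctly identify the key point that the $L^2$ versions of the difference bounds gain the extra $t^{-1/4}$ over the $L^\infty$ versions, which is what pushes the exponent from $-5/4$ to $-3/2$. Your families (i) and (ii) correspond to the paper's $R_1$ (bounded exactly as you say, as a convolution against a rescaled bump, via Young, yielding $t^{-3/2}\|Lu\|_{L^2_x}$) and to $R_3,R_4$ (bounded via \eqref{spatialL2}, \eqref{v gamma L^infty}, and---a detail worth making explicit---a fundamental-theorem-of-calculus argument in $v$ that converts the comparison $u_x(t,x)-u_x(t,vt)$ into $\|\partial_v(e^{-i\phi}u_x)\|_{L^2_v}\lesssim t^{-1/2}\|Lu_x\|_{L^2_x}$, so the $H^1$ control of $Lu$ from Theorem \ref{dispersive} is genuinely needed).

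The one genuine misstep is your family (iii). There are no non-resonant oscillatory cubic terms in this remainder that require integration by parts or a stationary-phase gain: the interaction is entirely on-diagonal (a single velocity $v$), so no such gain is available, and fortunately none is needed. The actual residual cubic term is $R_2=-\int i(|u|^2u)\,t^{-1/2}\bar\Psi_v\,dx$, where $\Psi_v$ is the packet built from $\chi'$ instead of $\chi$ (it arises because $\partial_x\bar\Phi_v$ splits into $-i\tfrac{x}{2t}\bar\Phi_v$, which produces the main term and the $v$-weighted errors $R_3,R_4$, plus the $\chi'$ piece). Since $\int\chi'=0$ this term carries no leading contribution, and it is bounded directly by $\|u\|_{L^\infty}^2 t^{-1/2}\|\tilde\gamma\|_{L^2_v}\lesssim \epsilon^3 t^{-3/2}$, where $\tilde\gamma:=\int u\bar\Psi_v\,dx$ satisfies $\|\tilde\gamma\|_{L^2_v}\lesssim\|u\|_{L^2_x}$ by the same Young's-inequality argument as for $\gamma$. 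So the step you flag as the main obstacle dissolves once the decomposition is written correctly; replacing your (iii) by this direct estimate closes the proof along the paper's lines.
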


\begin{proof}

Recall from \cite{byars2024globaldynamicssmalldata} that we can write $R=R_1+R_2+R_3+R_4$, where
\begin{align*}
    R_1=&\int \frac{i}{4t^2}e^{-i\phi}Lu \left[-i(x-vt)\chi(\cdot)+2t^{\frac{1}{2}}\chi'(\cdot)\right]\, dx,\\
    R_2=&-\int i(|u|^2u)t^{-\frac{1}{2}}\bar{\Psi}_v,\\
    R_3=&-\int \frac{v}{2}u\bar{\Phi}_v[|u|^2-|u(t,vt)|^2]\, dx,\\
    R_4=&\frac{v}{2}\gamma [t^{-1}|\gamma|^2-|u(t,vt)|^2].
\end{align*}
We have
\begin{align*}
    R_1=\frac{1}{2t}\left[\left(it^{\frac{1}{2}}\chi'(t^{\frac{1}{2}}v)+tv\chi(t^{\frac{1}{2}}v)\right)*_v\partial_v w(t,vt)\right],
\end{align*}
where we define
\begin{align}\label{w defn}
    w(t,x):=e^{-i\phi}u(t,x).
\end{align}
We can get the $L^2_v$ bound for $R_1$ using Young's inequality:
\begin{align*}
    \|R_1\|_{L^2_v}\lesssim \frac{1}{t}\|it^{\frac{1}{2}}\chi'(t^{\frac{1}{2}}v)+tv\chi(t^{\frac{1}{2}}v)\|_{L^1}\|\partial_v w(t,vt)\|_{L^2_v}\approx t^{-1}\|\partial_v w(t,vt)\|_{L^2_v}=t^{-\frac{3}{2}}\|Lu\|_{L^2_x}.
\end{align*}
To get the $R_2$ bound we need to define $\Psi_v:=e^{i\phi}\chi'\left(\frac{x-vt}{\sqrt{t}}\right)$ and $\Tilde{\gamma}:=\int u \bar{\Psi}_v\, dx$. This $\Tilde{\gamma}$ is very similar to $\gamma,$ so we expect it to have similar properties.  For example, we can get the bound:

\begin{align}
    \|\Tilde{\gamma}\|_{L^2_v}\lesssim \|u\|_{L^2_x},
\end{align}
by writing
\begin{align*}
    \Tilde{\gamma}=&\int u e^{-i\phi}\widebar{\chi'\left(\frac{x-vt}{\sqrt{t}}\right)}=t^{\frac{1}{2}}w*t^{\frac{1}{2}}\chi'(vt^{\frac{1}{2}}),
\end{align*}
so we have
\begin{align*}
    \|\Tilde{\gamma}\|_{L^2_v}\lesssim& t^{\frac{1}{2}}\|w\|_{L^2_v}\|t^{\frac{1}{2}}\chi'(vt^{\frac{1}{2}})\|_{L^1}
    \lesssim\|u\|_{L^2_x}.
\end{align*} 
Now we will use this to get the $L^2$ bound on $R_2$.
\begin{align*}
    |R_2|\lesssim& \|u\|_{L^\infty}^2t^{-\frac{1}{2}}\left|\int u\bar{\Psi}_v\right|,\\
    \|R_2\|_{L^2_v}\lesssim& \|u\|_{L^\infty}^2t^{-\frac{1}{2}}\|\Tilde{\gamma}\|_{L^2_v}
    \lesssim \|u\|_{L^\infty}^2t^{-\frac{1}{2}}\|u\|_{L^2_x}.
\end{align*}
For $R_3$, we have 

\begin{align*}
       |R_3|\lesssim& \|u\|_{L^\infty}\int u\bar{\Phi}_v[vu-vu(t,vt)]\, dx.
\end{align*}
First consider
\begin{align*}
     &\int u \bar{\Phi}_v[vu-vu(t,vt)]\, dx\\
     =&t^{-1}\int u \overline{\chi \lp \frac{x-vt}{\sqrt{t}}\rp}  e^{-i\phi}[xu(t,x)-vt u(t,vt)]\, dx\\
     =& t^{-1}\int  u\overline{\chi \lp \frac{x-vt}{\sqrt{t}}\rp}  e^{-i\phi}[Lu(t,x)-2ti u_x(t,x)-L u(t,vt)+2tiu_x (t,vt)]\, dx\\
       =& \underbrace{t^{-1}\int  u\overline{\chi \lp \frac{x-vt}{\sqrt{t}}\rp}  e^{-i\phi}[Lu(t,x)-L u(t,vt)]\, dx}_{I}
          + \underbrace{2i\int  u\overline{\chi \lp \frac{x-vt}{\sqrt{t}}\rp}  e^{-i\phi}[u_x (t,vt)- u_x(t,x)]\, dx}_{II}.
\end{align*}
The first term can be bounded in $L^2_v$ by 
\begin{align*}
    \|I\|_{L^2_v}\lesssim t^{-1}\|\gamma\|_{L^\infty}\|Lu\|_{L^2_v}\lesssim t^{-\frac{1}{2}+C\ep^2}\|u\|_{L^\infty}. 
\end{align*}
For the second term, we proceed as for the $L^\infty$ bounds in \cite{byars2024globaldynamicssmalldata}, using $w_2=e^{-i\phi}u_x$, the change of variables $x=(z+v)t$, and the fundamental theorem of calculus to get

\begin{align*}
|II|\leq& \|u\|_{L^\infty} \int  \overline{\chi \lp \frac{x-vt}{\sqrt{t}}\rp} [w_2 (t,vt)- w_2(t,x)]\, dx\\
   \leq&\|u\|_{L^\infty} \int \overline{\chi}(t^{\frac{1}{2}}z) [w_2 (t,vt)- w_2(t,t(z+v))]\, dx\\
=&\|u\|_{L^\infty}  \int \overline{\chi}(t^{\frac{1}{2}}z) \int_0^1 z \pv w_2(t,t(v+hz))\, dh \, dz,\\
\|II\|_{L^2_v}\leq& \|u\|_{L^\infty}\|\pv w_2\|_{L^2_v} \int |z| \overline{\chi}(t^{\frac{1}{2}}z) \, dz= t^{-1}\|u\|_{L^\infty}\|\pv w_2\|_{L^2_v} \lesssim t^{-\frac{3}{2}}\|u\|_{L^\infty}\|Lu_x\|_{L^2_x}.
\end{align*}
Then we have
\begin{align*}
    \|R_3\|_{L^2_v}\lesssim t^{-\frac{1}{2}+C\ep^2}\|u\|_{L^\infty}^2+t^{-\frac{3}{2}}\|u\|_{L^\infty}^2\|Lu_x\|_{L^2_x}\lesssim \ep t^{-\frac{3}{2}+\eta}.
\end{align*}
For $R_4$, we can use the same strategy as for the $L^\infty$ bound to get
\begin{align}
     \|R_4\|_{L^2_v}=&\left\|\frac{v}{2}\gamma [t^{-1}|\gamma|^2-|u(t,vt)|^2]\right\|_{L^2_v}\\
     \lesssim&\|v\gamma\|_{L^\infty}\|u\|_{L^\infty}\|u(t,vt)-t^{-\frac{1}{2}}e^{i\phi}\gamma\|_{L^2_v}\\
     \lesssim &\|v\gamma\|_{L^\infty}\|u\|_{L^\infty}t^{-1}\|Lu\|_{L^2}\lesssim \epsilon \langle t\rangle^{-\frac{3}{2}}(t^{-\frac{3}{4}}\|Lu\|_{L^2_x}+ t^{\frac{1}{2}}\|u_x\|_{L^\infty} )\|Lu\|_{L^2_x}.
\end{align}
Now overall we have
\begin{align*}
    \|R(t,v)\|_{L^2_v}\lesssim \epsilon t^{-\frac{3}{2}+\eta},
\end{align*}
which proves the Lemma. 
\end{proof}
Next, we solve the ODE \eqref{gamma asym} for $\gamma.$
\begin{lemma}
\label{gammalemma}
    We can write $\gamma$ as
    \begin{align}\label{gamma}
 \gamma(t,v)=W(v)e^{\frac{-iv}{2}|W(v)|^2\log t}+err_1,
\end{align}
where $err_1\in O_{L^\infty_v}(\epsilon t^{-\frac{1}{4}+\eta})\cap O_{L^2}(\epsilon t^{-\frac{1}{2}+\eta}).$
\end{lemma}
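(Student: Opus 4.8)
The plan is to solve the asymptotic ODE \eqref{gamma asym} directly, treating the remainder $R$ as a perturbation, and then to read off the claimed asymptotic formula \eqref{gamma} together with the error bounds. Write \eqref{gamma asym} as $i\gamma_t = \frac{v}{2}t^{-1}|\gamma|^2\gamma - R$. The first observation is that the leading term conserves modulus along the flow: $\frac{d}{dt}|\gamma|^2 = 2\,\mathrm{Re}(\bar\gamma\gamma_t) = 2\,\mathrm{Re}\big(\bar\gamma\,(-i)\big(\tfrac{v}{2}t^{-1}|\gamma|^2\gamma - R\big)\big) = 2\,\mathrm{Re}(i\bar\gamma R)$, since the cubic term is purely imaginary after multiplication by $\bar\gamma$. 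Hence $\big|\frac{d}{dt}|\gamma|^2\big| \lesssim |\gamma|\,|R|$, which by the bounds $\|\gamma\|_{L^\infty}\lesssim t^{1/2}\|u\|_{L^\infty}\lesssim \epsilon$ (from Lemma \ref{gamma bnds lemma} and Theorem \ref{dispersive}) and $\|R\|_{L^\infty}\lesssim \epsilon t^{-5/4+\eta}$ gives $\big|\frac{d}{dt}|\gamma|^2\big|\lesssim \epsilon^2 t^{-5/4+\eta}$. This is integrable in $t$ near $+\infty$, so $|\gamma(t,v)|^2$ converges as $t\to\infty$ to some limiting profile, which we call $|W(v)|^2$; moreover $\big||\gamma(t,v)|^2 - |W(v)|^2\big|\lesssim \epsilon^2 t^{-1/4+\eta}$ in $L^\infty_v$, and a parallel computation using $\|R\|_{L^2_v}\lesssim \epsilon t^{-3/2+\eta}$ from Lemma \ref{RL2bound} (together with $\|\gamma\|_{L^2_v}\lesssim\epsilon$) gives the $L^2_v$ rate $\epsilon^2 t^{-1/2+\eta}$ for $\big||\gamma|^2-|W|^2\big|$.

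Next I would extract the phase. Substitute $\gamma(t,v) = \rho(t,v)e^{i\psi(t,v)}$ with $\rho = |\gamma|$, or equivalently differentiate the argument directly: from $i\gamma_t = \frac{v}{2}t^{-1}|\gamma|^2\gamma - R$ we get, dividing by $i\gamma$ (valid where $\gamma\neq 0$; near the zero set one argues by the modulus bound and a limiting/density argument, or works with $\gamma\bar\gamma_t$ type quantities), $\frac{\gamma_t}{\gamma} = -i\frac{v}{2}t^{-1}|\gamma|^2 + i\frac{R}{\gamma}$. Taking imaginary parts, $\frac{d}{dt}\arg\gamma = -\frac{v}{2}t^{-1}|\gamma|^2 + \mathrm{Re}(R/\gamma)$. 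Now replace $|\gamma(t,v)|^2$ by its limit $|W(v)|^2$ at the cost of an error $\frac{v}{2}t^{-1}\big||\gamma|^2-|W|^2\big|\lesssim \epsilon^2\langle v\rangle t^{-5/4+\eta}$, which is integrable. Integrating from a fixed time to $t$ then yields $\arg\gamma(t,v) = -\frac{v}{2}|W(v)|^2\log t + \theta(v) + O(t^{-1/4+\eta})$ for some fixed phase $\theta(v)$; absorbing $e^{i\theta(v)}$ into the amplitude (i.e., redefining $W(v) := |W(v)|e^{i\theta(v)}$, which does not change $|W|$) gives $\gamma(t,v) = W(v)e^{-\frac{iv}{2}|W(v)|^2\log t} + err_1$. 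The $L^\infty_v$ bound $err_1\in O(\epsilon t^{-1/4+\eta})$ follows since the multiplicative error in modulus is $O(\epsilon^2 t^{-1/4+\eta})$ and $|W|\lesssim\epsilon$, while the error in the exponent is also $O(t^{-1/4+\eta})$ and $|e^{ia}-e^{ib}|\le|a-b|$; the $L^2_v$ bound comes from the corresponding $L^2_v$ estimate on $\big||\gamma|^2-|W|^2\big|$ and on $\int R/\gamma$ (using $\|R\|_{L^2_v}\lesssim\epsilon t^{-3/2+\eta}$, which integrates to $t^{-1/2+\eta}$).

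The step I expect to be the main obstacle is making the phase extraction rigorous uniformly in $v$, in particular near points where $\gamma(t,v)$ is small or vanishes, since dividing by $\gamma$ is then illegitimate and $\arg\gamma$ is not well-defined. The standard remedy — which I would use here — is to avoid the polar decomposition at the PDE level and instead introduce the modulated unknown $\tilde\gamma(t,v) := \gamma(t,v)\exp\!\big(\frac{iv}{2}|W(v)|^2\log t\big)$ (or, as in \cite{cite:NLS}, first prove convergence of $|\gamma|^2$ and then show $\tilde\gamma$ is Cauchy in $t$), compute $i\tilde\gamma_t = \frac{v}{2}t^{-1}\big(|\gamma|^2-|W|^2\big)\tilde\gamma - R\exp(\cdots)$, and note the right side is $O_{L^\infty}(\epsilon^3\langle v\rangle t^{-5/4+\eta}) + O_{L^\infty}(\epsilon t^{-5/4+\eta})$, hence integrable; this shows $\tilde\gamma(t,v)$ converges as $t\to\infty$ to a limit $W(v)$ with the stated rates, with no division ever performed. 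Finally, the regularity and localization claim $W\in H^{1-\eta,1-\eta}$ follows from propagating the $\partial_v$ and $\langle v\rangle$ weights through this convergence argument, using $\|\partial_v\gamma\|_{L^2_v}\lesssim\|Lu\|_{L^2_x}\lesssim\epsilon\langle t\rangle^\eta$ and $\|\langle v\rangle^{k/2}\gamma\|_{L^\infty}\lesssim\|Lu\|_{L^2}+\|u\|_{H^k_x}$ from Lemma \ref{gamma bnds lemma}, together with the analogous estimates on $\partial_v R$ and $\langle v\rangle R$; I would only sketch this, since it is the same integration-in-time mechanism applied to weighted norms, and the $t^\eta$ losses are exactly what produce the $1-\eta$ in the exponents.
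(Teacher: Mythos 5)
Your proposal is correct, but it takes a genuinely different route from the paper. The paper writes $\gamma=\gamma^*+h$ where $\gamma^*=W(v)e^{-\frac{iv}{2}|W|^2\log t}$ is an exact solution of the unperturbed ODE, and then runs an $L^2$ and $L^\infty$ energy estimate on $h$ closed by the nonlinear Gr\"onwall lemma; this is a perturbative comparison that presupposes the profile $W$ and implicitly anchors the estimate at $t=\infty$. You instead construct $W$: first you observe that the cubic term is norm-preserving pointwise in $v$, so $\frac{d}{dt}|\gamma|^2=2\Rea(i\bar\gamma R)$ is integrable and $|\gamma|^2$ converges to $|W|^2$ with the rates $t^{-1/4+\eta}$ ($L^\infty$) and $t^{-1/2+\eta}$ ($L^2$); then you show the modulated unknown $\tilde\gamma=\gamma\,e^{\frac{iv}{2}|W|^2\log t}$ satisfies an ODE with integrable right-hand side and hence is Cauchy in $t$, defining $W$ as its limit. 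This is the Ifrim--Tataru mechanism and it buys an explicit existence proof for $W$ (which the paper's version glosses over), at the cost of a two-step argument. You correctly identify that your first attempt at phase extraction (dividing by $\gamma$ and tracking $\arg\gamma$) fails near zeros of $\gamma$, and the modulated-unknown fix you give is the right one; only that version should be kept. One small point to tidy: in the bound for $\frac{v}{2t}(|\gamma|^2-|W|^2)\tilde\gamma$ you should not leave a bare $\langle v\rangle$ inside an $L^\infty$ estimate --- absorb the weight by pairing $v$ with $\gamma$ and invoking $\|v\gamma\|_{L^\infty}\lesssim\epsilon$ from \eqref{v gamma L^infty} (and, for the $L^2$ bound, pair $\||\gamma|^2-|W|^2\|_{L^2_v}$ with $\|v\gamma\|_{L^\infty}$). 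The closing remarks on the regularity of $W$ are not needed for this lemma; the paper treats that separately.
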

\begin{proof}
We can solve the equation
\begin{align*}
    i\gamma_t(t)= \, t^{-1}\frac{v}{2}(|\gamma|^2\gamma)-R(t,v),
\end{align*} 
by first solving the principal part
\begin{align*}
    i\gamma^*_t(t)= \, t^{-1}\frac{v}{2}(|\gamma^*|^2\gamma^*),
\end{align*} 
to get
\begin{align*}
    \gamma^*(t,v)=W(v)e^{-\frac{iv}{2}|W(v)|^2\log t},
\end{align*}
for some $W(v)$. We get the $L^2$ bound
\begin{align}\label{gamma2}
 \gamma(t,v)=W(v)e^{-\frac{iv}{2}|W(v)|^2\log t}+O_{L^2}(\epsilon t^{-\frac{1}{2}+\eta}),
\end{align}
in the following way. Define $h:=\gamma-\gamma^*.$
We can get the bound
 $\|h\|_{L^2}\lesssim \epsilon t^{-\frac{1}{2}+\eta}$, by doing an energy estimate for $h.$ Notice that $h$ solves the equation:
 \begin{align}
    ih_t=t^{-1}\frac{v}{2}[|\gamma|^2\gamma-|\gamma^*|^2\gamma^*]-R(t,v).\label{h eqn}
\end{align}
 Now define $f:=|\gamma|^2\gamma-|\gamma^*|^2\gamma^*$
and apply the following algebraic manipulation:
\begin{align*}
   |\gamma|^2\gamma-|\gamma^*|^2\gamma^*
   =&|\gamma|^2[\gamma-\gamma^*]+\gamma^*[\gamma\bar{\gamma}-\gamma^*\bar{\gamma^*}]\\
   =&|\gamma|^2[\gamma-\gamma^*]+\gamma^*[\bar{\gamma}(\gamma-\gamma^*)-\gamma^*(\bar{\gamma^*}-\bar{\gamma})]\\
    =&|\gamma|^2h+\gamma^*[\bar{\gamma}h+\gamma^*\bar{h}].
\end{align*}
By doing the usual energy estimates,
we get
\begin{align*}
    \frac{d}{dt}\|h\|_{L^2}^2\lesssim \left|\int t^{-1}v\Rea(i[f]\bar{h})\right|+\left|\int R(t,v)\bar{h}\right|.
\end{align*}
The second term can be bounded via H\"older's inequality by
\begin{align*}
    \|R\|_{L^2_v}\|h\|_{L^2_v}. 
\end{align*}
For the first term, we can use the algebraic manipulation above to write
\begin{align*}
    \Rea (if\bar{h})=\Rea(i\gamma^*[\bar{\gamma}|h|^2+\gamma^*(\bar{h})^2]).
\end{align*}
Integrating this, we obtain
\begin{align*}
     \left|\int t^{-1}v\Rea(i[f]\bar{h})\right|=&t^{-1}\int |v \gamma^*\gamma h^2|+|v|\gamma^*|^2|h|^2\\
     \lesssim& t^{-1}\left[\|\gamma^*\|_{L^\infty}\|v\gamma\|_{L^2_v}+\|\gamma^*\|_{L^\infty}\|v\gamma^*\|_{L^2_v}\right]\|h\|_{L^2}^2\\
     \lesssim& \epsilon t^{-1}\|h\|_{L^2_v}^2.
\end{align*}
Note that
\begin{align*}
    \|\gamma^*\|_{L^2}=&\|W(v)\|_{L^2},\\
    \|\gamma^*\|_{L^\infty}=&\|W(v)\|_{L^\infty}.
\end{align*}
These are both conserved in time and only depend on the initial data. Overall we have 
\begin{align}
     \frac{d}{dt}\|h\|_{L^2}^2
     \lesssim&\epsilon t^{-1}\|h\|_{L^2_v}^2+\ep t^{-\frac{3}{2}}\|h\|_{L^2_v}.
\end{align}
By applying Nonlinear Gr\"onwall \ref{NL Gronwall}, we get
\begin{align*}
    \|h\|_{L^2_v}\leq \ep t^{-\frac{1}{2}}.
\end{align*}
Similarly, we need to prove an $L^\infty$ bound on $h.$  From \eqref{h eqn}, we have
\begin{align*}
    \frac{d}{dt}\|h\|_{L^\infty}= t^{-1}\lp\|\gamma\|_{L^\infty}\|v\gamma\|_{L^\infty}+\|\gamma^*\|_{L^\infty} \|v\gamma\|_{L^\infty}+\|\gamma^*\|_{L^\infty}\|v\gamma^*\|_{L^\infty}\rp \|h\|_{L^\infty}+\|R(t,v)\|_{L^\infty}.
\end{align*}
Then solving this linear ODE and using that $\|R\|_{L^\infty}\lesssim \ep t^{-\frac{5}{4}+\eta}$, we get
\begin{align*}
    \|h\|_{L^\infty}\lesssim \ep t^{-\frac{1}{4}+\eta}.
\end{align*}
Now we have
\begin{align}\label{gamma in terms of W}
 \gamma(t,v)=W(v)e^{-\frac{iv}{2}|W(v)|^2\log t}+err_1,
\end{align}
where $err_1\in O_{L^\infty_v}(\epsilon t^{-\frac{1}{4}+\eta})\cap O_{L^2}(\epsilon t^{-\frac{1}{2}+\eta}).$
\end{proof}

There are now two steps left in order to complete the modified scattering: use what we have to write $u,$ and find the regularity of $W.$  This can be summarized in the following lemma:
\begin{lemma}  \label{u in terms of W lemma} 
The solution $u$ can be asymptotically expanded as
\begin{align*}
    u(t,x)=t^{-\frac{1}{2}}e^{i\frac{x^2}{4t}}W\left(\frac{x}{t}\right)e^{-\frac{ix}{2t}|W\left(\frac{x}{t}\right)|^2\log t}+err_x,
\end{align*}
for some $W(v)\in H^{1-C\epsilon^2,1-C\epsilon^2}.$
\end{lemma}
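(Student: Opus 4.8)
The plan is to combine the spatial and Fourier difference bounds from Lemma~\ref{gamma bnds lemma} with the asymptotic description of $\gamma$ from Lemma~\ref{gammalemma}, and then to track the regularity and decay of $W$ through the estimates already established. First I would substitute $\xi = v/2$ (equivalently $x = vt$) and write
\begin{align*}
    u(t,vt) = t^{-\frac12}e^{i\phi(t,vt)}\gamma(t,v) + O_{L^\infty_v}(t^{-\frac34}\|Lu\|_{L^2_x}) + O_{L^2_v}(t^{-1}\|Lu\|_{L^2_x}),
\end{align*}
using \eqref{spatialbnds1}--\eqref{spatialL2}, and then insert the expansion $\gamma(t,v) = W(v)e^{-\frac{iv}{2}|W(v)|^2\log t} + err_1$ from \eqref{gamma in terms of W}. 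Since $e^{i\phi(t,vt)} = e^{ix^2/(4t)}$ with $x = vt$ and $v = x/t$, this produces exactly the claimed main term $t^{-\frac12}e^{ix^2/(4t)}W(x/t)e^{-\frac{ix}{2t}|W(x/t)|^2\log t}$, with $err_x$ collecting the wave-packet remainder together with $t^{-\frac12}e^{i\phi}\cdot err_1$. The latter contributes $O_{L^\infty}(\epsilon t^{-\frac34+\eta})$ and $O_{L^2}(\epsilon t^{-1+\eta})$ after accounting for the $t^{-\frac12}$ prefactor and the $L^2_v \to L^2_x$ change of variables (which costs a factor $t^{\frac12}$); combined with the $\|Lu\|_{L^2_x} \lesssim \epsilon \langle t\rangle^{C\epsilon^2}$ bound \eqref{Lubound1}, every error term fits inside $O_{L^\infty}(\epsilon(1+t)^{-\frac34+C\epsilon^2}) \cap O_{L^2}(\epsilon(1+t)^{-1+C\epsilon^2})$, matching the statement of Theorem~\ref{mainthm}(a).

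For the regularity claim $W \in H^{1-C\epsilon^2, 1-C\epsilon^2}$, I would argue as follows. From the definition $\gamma^*(t,v) = W(v)e^{-\frac{iv}{2}|W(v)|^2\log t}$ together with the bounds \eqref{gammabnds}, namely $\|\gamma\|_{L^2_v} \lesssim \|u\|_{L^2_x} \lesssim \epsilon$ and $\|\partial_v\gamma\|_{L^2_v} \lesssim \|Lu\|_{L^2_x} \lesssim \epsilon\langle t\rangle^\eta$, I extract corresponding bounds on $W$ and $\partial_v W$. The modulation factor $e^{-\frac{iv}{2}|W|^2\log t}$ has modulus one, so $\|W\|_{L^2_v} = \|\gamma^*\|_{L^2_v} \lesssim \epsilon$ directly; differentiating, $\partial_v\gamma^* = (\partial_v W)e^{\cdots} - \frac{i}{2}\partial_v(v|W|^2)(\log t)\,W e^{\cdots}$, and using $\|\partial_v\gamma^*\|_{L^2_v} \lesssim \epsilon\langle t\rangle^\eta$ (which follows from $\|\partial_v\gamma\|_{L^2_v}$ plus the $err_1$ bound on $\partial_v h$, or from an energy estimate) one solves for $\|\partial_v W\|_{L^2_v}$ and absorbs the $\log t$ loss into $t^{C\epsilon^2}$. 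Since these bounds must hold uniformly as $t \to \infty$, the only way a $\log t$-type growth is permitted is if the corresponding norm is actually finite with the stated fractional loss; this forces $W \in H^{1-C\epsilon^2}$ in regularity. The weight $\la v\ra^{1-C\epsilon^2}W \in L^2$ comes symmetrically from the $\|v\gamma\|$-type bounds in \eqref{v gamma L^infty}, again with the $t^{C\epsilon^2}$ loss traded for the $C\epsilon^2$ loss in the localization index.

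\textbf{Main obstacle.} The genuinely delicate point is the regularity/localization bookkeeping for $W$: the a priori control on $\gamma$ degrades like $\langle t\rangle^{C\epsilon^2}$ in both the $\partial_v$ and the $v$-weighted norms, and one must show that this slow growth is \emph{exactly} compensated by the logarithmic phase correction, so that $W$ itself lands in $H^{1-C\epsilon^2,1-C\epsilon^2}$ rather than in a space with genuine $t$-dependent norms. Concretely, one needs to check that $\partial_v(v|W|^2\log t)$ and $v|W|^2\log t$ multiplied against $W$ remain controlled in $L^2_v$ uniformly in $t$ once the $t^{-C\epsilon^2}$-type gains from $err_1$ are used, which requires knowing $\|v W\|_{L^\infty}$ and $\|W\|_{L^\infty}$ are small --- these follow from \eqref{v gamma L^infty} and the $L^\infty$ part of Lemma~\ref{gammalemma}. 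I would isolate this in a short computation showing that if $\|\partial_v\gamma\|_{L^2} \lesssim \epsilon t^{\eta}$ and $\gamma = \gamma^* + O_{L^2}(\epsilon t^{-\frac12+\eta})$ with $\gamma^*$ of the modulated form, then $\partial_v W \in L^2$ with $\|\partial_v W\|_{L^2} \lesssim \epsilon$, completing the identification of the space. The rest of the proof is then a routine assembly of the pieces already in hand.
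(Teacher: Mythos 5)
The first half of your proposal --- writing $u$ via the spatial difference bounds \eqref{spatialbnds1}--\eqref{spatialL2} as $t^{-\frac12}e^{i\phi}\gamma(t,x/t)$ plus an acceptable error and then substituting the expansion of $\gamma$ from Lemma \ref{gammalemma} --- is exactly what the paper does, and your error bookkeeping there is correct.

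The regularity argument for $W$, however, has a genuine gap. You propose to differentiate $\gamma^* = W e^{-\frac{iv}{2}|W|^2\log t}$ and ``solve for'' $\partial_v W$ from a bound $\|\partial_v \gamma^*\|_{L^2_v}\lesssim \epsilon t^{\eta}$. No such bound is available: what is controlled is $\|\partial_v\gamma\|_{L^2_v}\lesssim\|Lu\|_{L^2_x}$, while $\gamma-\gamma^*=err_1$ is estimated only in $L^2$ and $L^\infty$ in Lemma \ref{gammalemma}; there is no estimate on $\partial_v\, err_1$, and producing one would require an energy estimate for $\partial_v h$ involving $\|\partial_v R\|_{L^2}$, which is established nowhere. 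Moreover, if your scheme did close, then since $W$ is time-independent you could evaluate the resulting bound at any fixed $t$ and conclude $\|\partial_v W\|_{L^2}\lesssim\epsilon$, i.e.\ full $H^{1}$ regularity --- strictly stronger than the $H^{1-C\epsilon^2,1-C\epsilon^2}$ the lemma asserts --- which is a signal that the direct route cannot work. The paper instead runs an interpolation: it writes $W=f+g$ with $g=\gamma e^{i\frac{v}{2}|\gamma|^2\log t}$ built from $\gamma$ rather than $\gamma^*$, precisely so that $\|g\|_{H^{1,1}}$ is controlled (with a growing bound $\epsilon t^{\eta}\log t$) by the available quantities $\|\partial_v\gamma\|_{L^2}$, $\|v\partial_v\gamma\|_{L^2}$, etc., while $\|f\|_{L^2}\lesssim\epsilon t^{-\frac12+\eta}$ decays. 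Neither piece alone gives regularity for $W$; the point is to localize the time-independent $W$ doubly dyadically in frequency $\lambda$ and position $r$ and to choose $t\sim(\lambda r)^2$ separately for each piece, balancing the two contributions to get $\|P_\lambda T_r W\|_{L^2}\lesssim\epsilon(\lambda r)^{-1+4\eta}$ and, after dyadic summation, $W\in H^{1-C\epsilon^2,1-C\epsilon^2}$. This scale-dependent choice of $t$ is the idea your proposal is missing, and it is also the reason the final index is $1-C\epsilon^2$ rather than $1$.
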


\subsection{\texorpdfstring{Solving for $u$}{Solving for u}}

Using the difference bounds  \eqref{spatialbnds1}, \eqref{spatialL2}, we can write
\begin{align*}
    u(t,x)=t^{-\frac{1}{2}}e^{i\phi(t,x)}\gamma\left(t,\frac{x}{t}\right)+err_x,
\end{align*}
where
\begin{align*}
    \|err_x\|_{L^\infty}\lesssim& t^{-\frac{3}{4}}\|Lu\|_{L^2_x}\lesssim \epsilon (1+t)^{-\frac{3}{4}+\eta},\\
      \|err_x\|_{L^2}\lesssim& t^{-1}\|Lu\|_{L^2_x}\lesssim \epsilon (1+t)^{-1+\eta}.
\end{align*}
Then, using (\ref{gamma}), we get
\begin{align*}
    u(t,x)
    =&t^{-\frac{1}{2}}e^{i\frac{x^2}{4t}}W\left(\frac{x}{t}\right)e^{-\frac{ix}{2t}|W\left(\frac{x}{t}\right)|^2\log t}+err_x,
\end{align*}
where now this newly denoted $err_x$ is also bounded as
\begin{align*}
    \|err_x\|_{L^\infty}\lesssim& \epsilon (1+t)^{-\frac{3}{4}+\eta},\\
      \|err_x\|_{L^2}\lesssim& \epsilon (1+t)^{-1+\eta}.
\end{align*}
In order to get (\ref{hatu mod scat}), we use the Fourier bounds \eqref{fourierbnds1}, \eqref{fourierbnds2} to get
\begin{align*}
    \hat{u}(t,\xi)=e^{it\xi^2}\gamma(t,2\xi)+err_\xi,
\end{align*}
where 
\begin{align}
    \|err_\xi\|_{L^\infty}\lesssim t^{-\frac{1}{4}}\|Lu\|_{L^2_x}\lesssim \epsilon (1+t)^{-\frac{1}{4}+\eta},\label{errxibnd}\\
      \|err_\xi\|_{L^2}\lesssim t^{-\frac{1}{2}}\|Lu\|_{L^2_x}\lesssim \epsilon (1+t)^{-\frac{1}{2}+\eta}.
\end{align}
Then using (\ref{gamma}), we get
\begin{align*}
    \hat{u}(t,\xi)=&e^{it\xi^2}[W(2\xi)e^{-i\xi|W(2\xi)|^2\log t}+err_1]+err_\xi\\
    =&e^{it\xi^2}W(2\xi)e^{-i\xi|W(2\xi)|^2\log t}+err_\xi,
\end{align*}
where this newly denoted $err_\xi$ still obeys (\ref{errxibnd}).

\subsection{\texorpdfstring{Regularity of $W$}{Regularity of W}}
Recall $w$ is defined as in \eqref{w defn},
and notice that for $v=x/t,$ we have
\begin{align*}
    \|u(0)\|_{L^2_x}=  \|u(t)\|_{L^2_x}=  t^{\frac{1}{2}}\|w(t)\|_{L^2_v}.
\end{align*}
By \eqref{gamma in terms of W},
\begin{align*}
      \|W\|_{L^2_v}=  \|\gamma\|_{L^2_v}+ err_x.
\end{align*}
Rearranging the difference bound \eqref{spatialL2}, we have
\begin{align*}
    \|t^{\frac{1}{2}}e^{-i\phi(t,vt)}u(t,vt)-\gamma(t,v)\|_{L^2_v}\lesssim& t^{-\frac{1}{2}}\|Lu\|_{L^2_x},
\end{align*}
which implies that
\begin{align*}
\|\gamma\|_{L^2_v}=t^{\frac{1}{2}}\|w(t)\|_{L^2_v}+err_x.
\end{align*}
Therefore we have
\begin{align*}
    \|W\|_{L^2_v}=\|u\|_{L^2_x}+ err_x,
\end{align*}
where $err_x= O_{L^2}(\epsilon t^{-\frac{1}{2}+\eta})$.

Now, to get the regularity of $W,$ we proceed with an interpolation argument.  The idea is to break it up into two pieces, estimate each piece separately, and then localize in frequency and space (on the uncertainty principle scale), and interpolate to get the final bound. We write 
\begin{align*}
    W= \underbrace{W(v)-\gamma(t,v)e^{i\frac{v}{2}|\gamma(t,v)|^2\log t}}_{f}+\underbrace{\gamma(t,v)e^{i\frac{v}{2}|\gamma(t,v)|^2\log t}}_g.
\end{align*}
First, estimate $f$ in $L^2.$ 
By solving \eqref{gamma2} for $W$ and plugging it in, we get
\begin{align*}
 \|W(v)-\gamma(t,v)e^{i\frac{v}{2}|\gamma(t,v)|^2\log t}\|_{L^2_v}=& \|\gamma(t,v)e^{i\frac{v}{2}|W(v)|^2\log t}-\gamma(t,v)e^{i\frac{v}{2}|\gamma(t,v)|^2\log t}\|_{L^2_v}+O(\epsilon t^{-\frac{1}{2}+\eta})\\
 =&\left\|\gamma(t,v)\left[e^{-i\frac{v}{2}|W(v)|^2\log t}-e^{-i\frac{v}{2}|\gamma(t,v)|^2\log t}\right]\right\|_{L^2_v}+O(\epsilon t^{-\frac{1}{2}+\eta})\\
 \lesssim& \|\gamma\|_{L^\infty}\left\|{v|W(v)|^2\log t}-{v|\gamma(t,v)|^2\log t}\right\|_{L^2_v}+O(\epsilon t^{-\frac{1}{2}+\eta})\\
  \lesssim& \|v\gamma\|_{L^\infty}\log t\left\|{|W(v)|^2}-{|\gamma(t,v)|^2}\right\|_{L^2_v}+O(\epsilon t^{-\frac{1}{2}+\eta})\\
  \lesssim& \ep \log t\left\|{W(v)\bar{W}(v)}-\gamma(t,v)\bar{W}+\gamma(t,v)\bar{W}-\gamma(t,v)\bar{\gamma}(t,v)\right\|_{L^2_v}\\
  &+O(\epsilon t^{-\frac{1}{2}+\eta})\\
  \lesssim& \ep \log t[\|W\|_{L^\infty}+\|\gamma\|_{L^\infty}]\left\|{W(v)}-\gamma(t,v)\right\|_{L^2_v}+O(\epsilon t^{-\frac{1}{2}+\eta})\\
  \lesssim&\epsilon t^{-\frac{1}{2}+\eta},
\end{align*}
where the third line follows from the Lipschitz continuity of $e^{ix}$ when $x\in \R$, and the last follows from $\gamma$ and $W$ being bounded in $L^\infty.$
Next, we want to bound $g$ in $H^{1,1}$. To do this, calculate
\begin{align*}
\pv(v g)=&    \partial_v[v\gamma(t,v)e^{i\frac{v}{2}|\gamma|^2\log t}]\\
=& \gamma(t,v)e^{i\frac{v}{2}|\gamma|^2\log t}+v\partial_v\gamma(t,v)e^{i\frac{v}{2}|\gamma|^2\log t}+ i\frac{v^2}{2}\gamma(t,v)e^{i\frac{v}{2}|\gamma|^2\log t}\cdot \log t(\gamma\bar{\gamma}_v+\bar{\gamma}\gamma_v)\\
    +&\frac{v}{2}\gamma(t,v)e^{i\frac{v}{2}|\gamma|^2\log t}\cdot \log t(|\gamma|^2),\\
    \| \partial_v(vg)\|_{L^2_v}\lesssim&\|\gamma(t,v)\|_{L^2_v}+\|v\partial_v\gamma(t,v)\|_{L^2_v}+\log t \|v\gamma\|_{L^\infty}^2\|\partial_v\gamma(t,v)\|_{L^2_v}\\
    +&\log t \|v\gamma\|_{L^\infty}\|\gamma\|_{L^\infty}\|\gamma(t,v)\|_{L^2_v}\\
    \lesssim& \ep+ \|v\partial_v\gamma(t,v)\|_{L^2_v}+ \ep^3  t^{\eta}\log t.
\end{align*}
All the terms can be bounded by Lemma \ref{gamma bnds lemma}, except the second one. It remains to check $v\pv \gamma$. 
From \cite{byars2024globaldynamicssmalldata},
we have
\begin{align*}
\pv \gamma=t^{\frac{1}{2}}\pv w(t,vt)* t^{\frac{1}{2}}\chi(vt^{\frac{1}{2}}),
\end{align*}
where $w$ is as in \eqref{w defn}.
By the convolution property of multiplication, we split this term further,
\begin{align*}
    v \pv \gamma=\underbrace{t^{\frac{1}{2}}v\pv w(t,vt)* t^{\frac{1}{2}}\chi(vt^{\frac{1}{2}})}_I+\underbrace{t^{\frac{1}{2}}\pv w(t,vt)* vt^{\frac{1}{2}}\chi(vt^{\frac{1}{2}})}_{II}.
\end{align*}
The first term is
\begin{align*}
   I= &\int t^{\frac{1}{2}}y \p_y w(t,yt) \cd t^{\frac{1}{2}}\chi((v-y)t^{\frac{1}{2}}) \,dy\\
    =&\int ty \lp\frac{-i}{2}e^{-i\phi(t,yt)}Lu(yt) \rp \chi((v-y)t^{\frac{1}{2}}) \,dy\\
    =&\int \p_y \lp e^{-i\phi(t,yt)} \rp Lu(yt)\chi((v-y)t^{\frac{1}{2}}) \,dy\\
     =-&\int \lp e^{-i\phi(t,yt)} \rp \lefb \p_y Lu(yt)\chi((v-y)t^{\frac{1}{2}})-t^{\frac{1}{2}}Lu(y)\chi'((v-y)t^{\frac{1}{2}}) \rigb \,dy.
\end{align*}
Then taking the $L^2_v$ norm and using Young's inequality,
\begin{align*}
     \|I\|_{L^2_v}\leq& t^{-\frac{1}{2}}\|\p_v Lu\|_{L^2_v}\|t^{\frac{1}{2}}\chi(vt^{\frac{1}{2}})\|_{L^1_v}+\|Lu\|_{L^2_v}\|t^{\frac{1}{2}}\chi'(vt^{\frac{1}{2}})\|_{L^1_v}\\
   =& t^{-1}\|\p_v Lu\|_{L^2_x}+t^{-\frac{1}{2}}\|Lu\|_{L^2_x}\\
   =& \|\p_x Lu\|_{L^2_x}+t^{-\frac{1}{2}}\|Lu\|_{L^2_x}\\
   \leq& \ep t^{\eta}.
\end{align*}
The second term can also be bounded as
\begin{align*}
    \|II\|_{L^2_v}=\|t^{\frac{1}{2}}\pv w\|_{L^2_v}\|vt^{\frac{1}{2}}\chi(vt^{\frac{1}{2}})\|_{L^1_v}\approx =\|\pv w\|_{L^2_x}=\|Lu\|_{L^2_x}\lesssim \ep t^\eta.
\end{align*}
This results in
\begin{align*}
g=O_{H^{1,1}}(\ep t^{\eta}\log t).
\end{align*}
Putting the bounds for $f$ and $g$ together, we have
\begin{align*}
    W(v)=O_{L^2_v}(\epsilon t^{-\frac{1}{2}+\eta})+O_{H^{\MI{1,1}}_v}(\log t \epsilon t^{\eta}).
\end{align*}
Now we will localize in space and frequency in order to interpolate and find the regularity of $W.$ We
denote the dyadic frequency, respectively spatial localization scales by $\lambda, r \gtrsim 1$, where 
the frequencies $|\xi|\lesssim 1$ and the spatial region $|v| \lesssim 1$ are grouped together as a single piece, since we are working in inhomogeneous spaces. For the localizations we use the operators $P_\lambda$ in frequency, respectively $T_r$ in position. We estimate the (doubly) dyadic pieces of $g$ by 
\[
\|P_\lambda T_r g\|_{L^2} \lesssim \frac{1}{\lambda r}
\|g\|_{H^{1,1}}, \qquad r,\lambda \geq 1.
\]
Then
\begin{align*}
      \|P_\lambda T_r W\|_{L^2}\lesssim& \|P_\lambda T_r f\|_{L^2}+  \|P_\lambda T_r g\|_{L^2} \\
      \lesssim& \ep\log t \lp t^{\eta}\lambda^{-1}r^{-1}+ t^{-\frac{1}{2}+\eta}\rp.
\end{align*}

Here we optimize the choice of $t$ depending on both $\lambda$ and $r$. This is achieved when $t\sim (\lambda r)^{2}$ so we get
\begin{align*}
      \|P_\lambda T_r W\|_{L^2}\lesssim& \ep (\lambda r)^{-1+4\eta}.
\end{align*}
Since $\eta\approx \ep^2$, for $C$ large enough, after dyadic summation, this gives
\begin{align*}
       \| W\|_{H^{1-C\ep^2, 1-C\ep^2}}\lesssim& \ep^{-1} ,
\end{align*}
where the dyadic summation loses\footnote{This loss can be avoided by replacing the Sobolev exponent $1-C\epsilon^2$ by $1-c$ with a fixed positive constant $c$.} an $\epsilon^{-2}$ 
factor.
This completes the proof of Lemma \ref{u in terms of W lemma} and therefore proves the modified scattering.

\section{Asymptotic Completeness \label{AC sec}}

Now we will prove the asymptotic completeness of the problem, which is part (b) of Theorem \ref{mainthm}.  This result is given in the following proposition.
\begin{prop} \label{Prop AC}
    Given an asymptotic profile $W$ satisfying 
    \begin{equation}
    \label{Wcond}
    \|W\|_{H^{3/2+2\delta}_v}\leq M\ll \epsilon, \,\,\|vW\|_{H^{3/2+\delta}_v}\leq M, \,\,\|v^2W\|_{H^{1}_v}\leq M,\,
    \end{equation}
    with $\delta, M>0, \,\,M^2\ll \delta \ll 1$, then there exists a unique initial data $u_0$ with
    \begin{align}\label{FPinitials}
       \|u_0\|_{H^1}\leq \epsilon,\,\,\,
           \|xu_0\|_{H^1}\leq \epsilon,
    \end{align}
    such that \eqref{u mod scat} and \eqref{hatu mod scat} hold.
\end{prop}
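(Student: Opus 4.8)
The plan is to construct $u_0$ by solving \eqref{dnls} backwards from $t=+\infty$, using the prescribed profile $W$ to build an approximate solution, running vector field energy estimates inside a bootstrap, and then passing to a limit. First I would set
\[
\gamma_{app}(t,v):=W(v)\,e^{-\frac{iv}{2}|W(v)|^2\log t},
\]
which solves the asymptotic ODE \eqref{gamma asym} with $R\equiv 0$, and define the approximate solution $u_{app}(t,x):=t^{-\frac12}e^{i\frac{x^2}{4t}}\gamma_{app}(t,x/t)$ (inserting, if convenient, a smooth truncation to frequencies $\lesssim 1$ so that $u_{app}(t,\cdot)$ is Schwartz for each $t$). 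Using the wave packet identity $(i\partial_t+\partial_x^2)\Phi_v=O(1/t)$ from Section~\ref{linear section} together with the bounds \eqref{Wcond} on $W$, $vW$, $v^2W$, one verifies that $u_{app}$ solves \eqref{dnls} up to a source term $e(t,x)$ which decays like $t^{-1-\kappa}$ (for some $\kappa>0$, modulo $t^{C\epsilon^2}$ factors) in $L^2$, in $H^1$, and after applying $L$; this is precisely the step where the regularity and decay demanded of $W$ are consumed, since the error measures how far the leading asymptotic ODE is from the true equation.

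Next, for each large $T$ I would solve the Cauchy problem for \eqref{dnls} with data $u(T)=u_{app}(T)$ at time $T$ (equivalently, solve for $v_T=u_T-u_{app}$ with $v_T(T)=0$ and source $-e$), obtaining $u_T$ on an interval $[T_0,T]$ with $T_0$ a fixed large constant. The heart of the argument is a bootstrap on $[T_0,T]$, uniform in $T$, for
\[
\|u_T(t)\|_{H^1}\lesssim\epsilon,\qquad \|Lu_T(t)\|_{H^1}\lesssim\epsilon\langle t\rangle^{C\epsilon^2},\qquad \|u_T(t)\|_{L^\infty}+\|\partial_x u_T(t)\|_{L^\infty}\lesssim\epsilon\langle t\rangle^{-\frac12}.
\]
The $H^1$ bound on $u_T$ comes from an energy estimate that exploits the structure of the derivative nonlinearity $\partial_x(|u|^2u)$ and the reality of $|u|^2$ to avoid a loss of derivatives; the $H^1$ bound on $Lu_T$ uses the commutation relations $[\,i\partial_t+\partial_x^2,\,L\,]=0$, $[\partial_x,L]=1$ and the product rule for $L$ from Lemma~\ref{properties of L}; and the pointwise bounds are recovered from the $L^2$ control via the Klainerman--Sobolev inequality $\|f\|_{L^\infty}^2\lesssim t^{-1}\|f\|_{L^2}\|Lf\|_{L^2}$ and its $\partial_x$ analogue. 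The nonlinear Gr\"onwall Lemma~\ref{NL Gronwall} converts the $t^{-1}$ coefficients into the $t^{C\epsilon^2}$ growth and absorbs the $t^{-1-\kappa}$ source into the error, and smallness of $\epsilon$ (and of $M$ relative to $\epsilon$) closes the loop; the seed is $u_T(T)=u_{app}(T)$, which satisfies the bounds by the estimates on $u_{app}$.

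Since the resulting bounds are uniform in $T$, I would then show $\{u_T\}$ is Cauchy as $T\to\infty$: for $T_1<T_2$ the difference solves a difference equation with vanishing data at $T_1$ and source bounded by $\|e\|$ on $[T_1,T_2]$, so a backward energy estimate gives $\sup_{[T_0,T_1]}\|u_{T_1}-u_{T_2}\|_{H^1}\to0$ as $T_1\to\infty$. The limit $u$ solves \eqref{dnls} on $[T_0,\infty)$ with the same bounds, and by the (global) well-posedness theory for \eqref{dnls}, which holds well below $H^1$, it extends back to $[0,\infty)$ with $u(0)=u_0\in H^{1,1}$; the bounds \eqref{FPinitials} follow from $\|u_0\|_{H^1}\lesssim\epsilon$ and the identity $\|xu_0\|_{H^1}=\|Lu(0)\|_{H^1}\lesssim\epsilon$. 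That $u$ realizes \eqref{u mod scat}--\eqref{hatu mod scat} with the given $W$ is then read off exactly as in Section~\ref{MS sec}: forming $\gamma(t,v)=\int u\bar\Phi_v\,dx$, the difference bounds \eqref{spatialbnds1}--\eqref{fourierbnds2} pass from $u$ to $\gamma$, and a Gr\"onwall estimate on $\gamma-\gamma_{app}$ (as in Lemma~\ref{gammalemma}) identifies the profile as $W$. Uniqueness of $u_0$ follows because two solutions with the same asymptotics have difference decaying to $0$ at $\infty$ and obeying the same backward energy estimate, hence vanishing identically.

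The main obstacle is the quasilinear character of \eqref{dnls}. Because the nonlinearity carries a derivative, the energy estimates for $u_T$ and especially for $Lu_T$ at the $H^1$ level produce terms such as $\partial_x(|u|^2Lu)$ that threaten to lose one derivative, and the whole scheme must close without the gauge transformation used in \cite{hayashi1994modified,HayashiNaumkin2013modified}. Controlling these terms requires repeated use of the algebraic structure of the equation, integrating by parts so that the dangerous contributions become lower order; making this work is the genuinely new part of the argument. A secondary difficulty is that running the construction backwards from infinity forces the approximate-solution error $e$ to be truly time-integrable against the Gr\"onwall weights, which is what dictates the extra $\delta$'s and the condition $v^2W\in H^1$ in \eqref{Wcond} (these supply the margin needed to control $\partial_x Lu$ and the weighted pieces of $e$).
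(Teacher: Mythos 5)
Your overall architecture (backward construction from an approximate solution built out of $W$, energy estimates inside a bootstrap, Klainerman--Sobolev for the pointwise bounds, nonlinear Gr\"onwall, and a Cauchy-sequence limit over the final time $T$) matches the paper's. But there is a genuine gap at the very first step: you build $u_{app}$ directly from $W$, offering only a \emph{fixed} truncation to frequencies $\lesssim 1$ as an optional regularization, and then assert that the source term $e=(i\partial_t+\partial_x^2)u_{app}+i\partial_x(|u_{app}|^2u_{app})$ decays integrably in $L^2$, $H^1$, and after applying $L$. At the stated regularity $W\in H^{3/2+2\delta}$ this verification fails: the explicit computation of the source (the paper's $f_1,\dots,f_5$) produces a term $t^{-2}\partial_v^2 W$, and $Le$, $\partial_x e$ produce $\partial_v^3 W$, none of which are in $L^2$. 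A time-independent truncation does not repair this, because then the limiting profile of the constructed solution would be $P_{\leq 1}W$ rather than the prescribed $W$. The paper's resolution is the \emph{time-dependent} frequency truncation $\W:=W_{\leq t^{1/2}}$: the scale $t^{1/2}$ is calibrated so that, on one hand, $\|\W''\|_{L^2}\lesssim Mt^{1/4-\delta}$ and $\|\W'''\|_{L^2}\lesssim Mt^{3/4-\delta}$ grow slowly enough to be absorbed by the $t^{-2}$ prefactors in $f$ (Lemmas \ref{Wbounds_lemma} and \ref{f bound lemma}), and, on the other hand, $\|W-\W\|_{L^2}\lesssim Mt^{-3/4-\delta}$ decays, so replacing $W$ by $\W$ does not alter the asymptotic profile (see \eqref{differenceW3/2} and Lemma \ref{u_asymp \uap lemma}). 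This truncation is not a convenience; it is the device that makes the low-regularity hypothesis \eqref{Wcond} usable, and without it your error estimate, and hence the whole scheme, does not close.

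A secondary problem is that you state the bootstrap for $u_T$ itself with the non-decaying forward-problem bounds $\|u_T\|_{H^1}\lesssim\epsilon$ and $\|Lu_T\|_{H^1}\lesssim\epsilon\langle t\rangle^{C\epsilon^2}$. What the backward problem actually requires, and what the paper bootstraps in \eqref{bootstrap assumptions}, are \emph{decaying} bounds on the difference $U=u-u_{app}$, e.g.\ $\|U\|_{L^2}\leq DMt^{-1/2+\delta}$ and $\|LU\|_{L^2}\leq DMt^{-1/4+\delta}$; these yield $\|U\|_{L^\infty}\lesssim DMt^{-7/8+\delta}$ by Klainerman--Sobolev, they drive the Cauchy-sequence estimate (one needs $\|U_{T_1}(T_2)\|_{L^2}\to0$ as $T_1,T_2\to\infty$, which $O(\epsilon)$ control cannot give), and they are what identify the limiting profile as the prescribed $W$ rather than merely \emph{some} profile as in Lemma \ref{gammalemma}. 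Finally, for the quasilinear derivative loss you gesture at integration by parts; the paper's actual mechanism at the $H^1$ level for $LU$ is the identity $u_{xx}\bar v=\frac{1}{2ti}Lu_x\bar v-\frac{1}{2ti}\overline{Lv}\,u_x-\bar v_x u_x$ of Lemma \ref{u_xx to L lemma}, which trades the extra derivative for $L$ plus a factor of $t^{-1}$; plain integration by parts does not suffice there, so this step needs to be made explicit.
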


We start by defining an ansatz for our solution:
\begin{align*}
    u_{asymp}
=t^{-\frac{1}{2}}e^{i\frac{x^2}{4t}}e^{-\frac{ix}{2t}|W\left(\frac{x}{t}\right)|^2\log t}W\left(\frac{x}{t}\right).
\end{align*}
One would expect this to be a good approximate solution for  which we would like to find an exact solution matching it at infinity.
However, as it turns out,  if $W$ is at low regularity then this ansatz is not a good  enough approximate solution. We rectify this  by replacing $W$ with a time dependent frequency localized version. Let
$\mathcal{W}:=W_{\leq t^{\frac{1}{2}}}(v)$
be the Littlewood-Paley projection to frequencies $\leq t^{\frac{1}{2}}$, as in Definition \ref{LP defn}. Then we define a better ansatz as
\begin{align*}
    u_{app}:=t^{-\frac{1}{2}}e^{i\frac{x^2}{4t}}\W\left(\frac{x}{t}\right)e^{-\frac{ix}{2t}|\W\left(\frac{x}{t}\right)|^2\log t}.
\end{align*}

We need to compare $\uap$ with $u_{asymp}$ to make sure this  is an acceptable substitution. Before doing this, we will prove a key lemma.

\begin{lemma}\label{Wbounds_lemma}
Let $W$ be as in \eqref{Wcond}. Then for the frequency truncated $\W$ we have the following bounds:
    \begin{align}\label{W bounds row 1}
      \|\W\|_{L^\infty}\lesssim M, \,\,\,     \|\W'\|_{L^\infty}\leq M, \,\,\, \|\W'\|_{L^2}\leq M, \,\,\,  \|\W''\|_{L^2}\leq Mt^{\frac{1}{4}-\delta},   \\
  \|\W'''\|_{L^2}\leq Mt^{\frac{3}{4}-\delta}, \,\,\, \|\W''\|_{L^\infty}\leq Mt^{\frac{1}{2}-\frac{\delta}{2}} \,\,\,  \|v\W\|_{L^\infty}\lesssim M,\label{W bounds row 2}\\
  \,\, \,\|(v\W)'\|_{L^2}\leq M\,\,\,\|v\W'\|_{L^\infty}\leq M, \,\, \,\|v\W''\|_{L^2}\leq Mt^{\frac{1}{4}-\delta}, \,\,\, \|v^2\W'\|_{L^2}\leq M,
    \end{align}
    and the difference bounds
    \begin{align}
\label{differenceW3/2}
     \|W(v)-\W(v)\|_{L^2_v}\lesssim& Mt^{-\frac{3}{4}-\delta}, \,\,\,\,\,\, \|W(v)-\W(v)\|_{L^\infty}\lesssim Mt^{-\frac{1}{2}-\delta}.
\end{align}
We also have the bounds on the time derivatives:
\begin{align*}
\| \pt \W\|_{L^2_v}=\frac{1}{t}\|W_{t^{\frac{1}{2}}}\|_{L^2_v}\leq M t^{-\frac{3}{2}},\,\,\,\| \pt \W'\|_{L^2_v}\leq M t^{-\frac{5}{4}-\delta},\\
 \|\pt (v\W)\|_{L^2}\leq Mt^{-\frac{3}{2}} ,\,\,\,
\| \pt v\W'\|_{L^2_v}\leq M t^{-\frac{5}{4}-\delta},
\end{align*}
\end{lemma}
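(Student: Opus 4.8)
The plan is to obtain every estimate in the lemma from standard Littlewood--Paley calculus, working at the single frequency scale $\lambda:=t^{1/2}\ge 1$ and trading the regularity and spatial decay of $W$, $vW$, $v^2W$ assumed in \eqref{Wcond} for powers of $\lambda$. Three tools will be used throughout: (i) Bernstein's inequality $\|P_{\le\lambda}f\|_{L^\infty}\lesssim\lambda^{1/2}\|P_{\le\lambda}f\|_{L^2}$ together with the Sobolev embedding $\|f\|_{L^\infty}\lesssim\|f\|_{H^{1/2+\delta}}$; (ii) the frequency-trading bounds $\|\partial_v^k P_{\le\lambda}f\|_{L^2}\lesssim\lambda^{k-s}\|f\|_{H^s}$ for $k\ge s$ and $\|P_{>\lambda}f\|_{L^2}\lesssim\lambda^{-s}\|f\|_{H^s}$, valid since $\varphi(\xi/\lambda)$ is supported in $|\xi|\lesssim\lambda$ (and $0\le\varphi\le 1$ gives $\|P_{\le\lambda}f\|_{H^s}\lesssim\|f\|_{H^s}$); and (iii) the commutator identity that $[v,P_{\le\lambda}]$ is the Fourier multiplier with symbol $i\lambda^{-1}\varphi'(\xi/\lambda)$, hence bounded on every $L^p$ with norm $\lesssim\lambda^{-1}$ and, more generally, $\partial_v^k[v,P_{\le\lambda}]$ bounded on $L^2$ with norm $\lesssim\lambda^{k-1}$ --- so that multiplication by $v$ commutes with $P_{\le\lambda}$ up to lower-order errors.

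First I would dispose of the unweighted bounds in \eqref{W bounds row 1}--\eqref{W bounds row 2}, i.e. those not carrying a factor $v$. Since $\W=P_{\le\lambda}W$, tool (ii) gives $\|\W\|_{H^s}\lesssim\|W\|_{H^s}$ for $s\le\tfrac32+2\delta$ and $\|\W\|_{H^s}\lesssim\lambda^{\,s-(3/2+2\delta)}\|W\|_{H^{3/2+2\delta}}$ for $s\ge\tfrac32+2\delta$; combined with (i) and $\lambda=t^{1/2}$ this yields $\|\W\|_{L^\infty},\|\W'\|_{L^\infty},\|\W'\|_{L^2}\lesssim M$, then $\|\W''\|_{L^2}\lesssim\lambda^{1/2-2\delta}M=Mt^{1/4-\delta}$, $\|\W'''\|_{L^2}\lesssim\lambda^{3/2-2\delta}M=Mt^{3/4-\delta}$, and $\|\W''\|_{L^\infty}\lesssim\|\W\|_{H^{5/2+\delta}}\lesssim\lambda^{1-\delta}M=Mt^{1/2-\delta/2}$.

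Next I would handle the weighted quantities by commuting multiplication by $v$ past the projection: write $v\W=P_{\le\lambda}(vW)+[v,P_{\le\lambda}]W$, $v\W'=P_{\le\lambda}(vW')+[v,P_{\le\lambda}]W'$ with $vW'=(vW)'-W$, $v\W''=P_{\le\lambda}(vW'')+[v,P_{\le\lambda}]W''$ with $vW''=(vW)''-2W'$, and $v^2\W'=P_{\le\lambda}(v^2W')+[v^2,P_{\le\lambda}]W'$ with $v^2W'=(v^2W)'-2vW$. For each, the main term is estimated exactly as in the unweighted step, now invoking $\|vW\|_{H^{3/2+\delta}}\le M$ and $\|v^2W\|_{H^1}\le M$, while the commutator terms are controlled via tool (iii) (each factor $[v,P_{\le\lambda}]$ costing $\lambda^{-1}\le 1$) applied to the already-proved bounds on $\W,\W',\W''$; this yields $\|v\W\|_{L^\infty},\|(v\W)'\|_{L^2},\|v\W'\|_{L^\infty},\|v^2\W'\|_{L^2}\lesssim M$ and $\|v\W''\|_{L^2}\lesssim Mt^{1/4-\delta}$. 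The difference bounds \eqref{differenceW3/2} come for free since $W-\W=P_{>\lambda}W$: tool (ii) gives $\|P_{>\lambda}W\|_{L^2}\lesssim\lambda^{-(3/2+2\delta)}M=Mt^{-3/4-\delta}$, and Bernstein on the dyadic pieces above $\lambda$, summed geometrically, gives $\|P_{>\lambda}W\|_{L^\infty}\lesssim\lambda^{-(1+2\delta)}M=Mt^{-1/2-\delta}$. Finally, for the time-derivative bounds I would use $\pt[\varphi(\xi/\lambda)]=-\tfrac1{2t}\psi(\xi/\lambda)$ with $\psi(\eta)=\eta\varphi'(\eta)$ supported in $\{1\le|\eta|\le2\}$, so $\pt\W=-\tfrac1{2t}\widetilde P_\lambda W$ with $\widetilde P_\lambda$ localizing to the annulus $|\xi|\sim\lambda$; the annulus localization gains an extra $\lambda^{-1}$ per derivative relative to $\|W\|_{L^2}$, which is exactly the source of $\|\pt\W\|_{L^2}\lesssim Mt^{-3/2}$ and $\|\pt\W'\|_{L^2}\lesssim Mt^{-5/4-\delta}$, and the weighted versions follow by combining this with the same $v$-commutation as above.

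The only genuine obstacle I anticipate is bookkeeping in the weighted higher-derivative estimates --- the bounds on $v\W''$, $v^2\W'$, and the weighted time derivatives $\pt(v\W)$, $\pt(v\W')$ --- where one must distribute the $v$-weights and derivatives so that the commutator errors $[v,P_{\le\lambda}]$ and $[v^2,P_{\le\lambda}]$ genuinely land at lower order, and verify that the precise exponents of $t$ claimed in the lemma emerge from the indices $\delta$ versus $2\delta$ appearing in \eqref{Wcond}. Everything else reduces to direct applications of Bernstein's inequality, Sobolev embedding, and the monotonicity $\|P_{\le\lambda}f\|_{H^s}\lesssim\|f\|_{H^s}$.
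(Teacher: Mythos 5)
Your proposal is correct and follows essentially the same route as the paper: Sobolev embedding and Bernstein combined with the frequency localization at scale $\lambda=t^{1/2}$ to trade the regularity of $W$, $vW$, $v^2W$ for powers of $t$, the geometric summation over dyadic blocks above $t^{1/2}$ for the $L^\infty$ difference bound, and the identity $\pt\W=t^{-1}W_{t^{1/2}}$ with the resulting annulus localization supplying the extra decay in the time-derivative bounds. The one place you are more careful than the paper is the weighted estimates: the paper passes from norms of $v\W$ to the assumed norms of $vW$ without comment, whereas you explicitly write $v\W=P_{\le\lambda}(vW)+[v,P_{\le\lambda}]W$ and control the commutator as an $O(\lambda^{-1})$ multiplier localized to $|\xi|\sim\lambda$ --- a step that genuinely needs to be there, since $v$ and $P_{\le\lambda}$ do not commute.
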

where $W_{t^{\frac{1}{2}}}$ is localized exactly to frequency $t^{\frac{1}{2}}.$

\begin{proof}
We will be using the Sobolev embedding inequality
\begin{align*}
    \|u\|_{L^\infty(\R^d)}\lesssim \|u\|_{H^k(\R^d)},
\end{align*}
which is true for $k>\frac{d}{2}$. Using this, and the frequency localization of $\W,$ we obtain
    \begin{align*}
    \|\W\|_{L^\infty}\leq& \|\W\|_{H^{3/2+2\delta}}\leq M,\\
    \|\W'\|_{L^\infty}\leq& \|\W'\|_{H^{1/2+2\delta}}= \|\xi^{\frac{3}{2}+2\delta}\wh{\W}\|_{L^2_\xi}=\|\W\|_{H^{3/2+2\delta}}\leq M,\\
     \|\W''\|_{L^\infty}\leq&\|\W''\|_{H^{1/2+\delta}}= \|\xi^{\frac{5}{2}+\delta}\wh{\W}\|_{L^2_\xi}\leq t^{\frac{1}{2}-\frac{\delta}{2}}\|\W\|_{H^{3/2+2\delta}}   \leq Mt^{\frac{1}{2}-\frac{\delta}{2}},\\
    \|\W'\|_{L^2}=&\|\W\|_{\dot{H}^2}\leq \|W\|_{H^{3/2+2\delta}}\leq M.
\end{align*}
For $v\W$, we use the spatial localization of $\W$ to get
\begin{align*}
      \|v\W\|_{L^\infty}\leq& \|v\W\|_{H^{3/2+2\delta}}\lesssim M.
\end{align*}
In general, we have
\begin{align*}
     \|v\W'\|_{L^p}\leq&  \|\W\|_{L^p} +  \|(v\W)'\|_{L^p},
\end{align*}
so it is enough to show
\begin{align*}
   \|(v\W)'\|_{L^2}=& \|\la \xi\ra \wh{v\W}\|_{L^2_\xi}\leq \|v\W\|_{H^{3/2+\delta}} \leq M, \\
    \|(v\W)'\|_{L^\infty} \lesssim& \|(v\W)'\|_{H^{1/2+2\delta}}\leq \|v\W\|_{H^{3/2+2\delta}}\lesssim M,\\
    \|(v\W)''\|_{L^2} =& \|\la \xi\ra^{2}\wh{v\W}\|_{L^2_\xi}= \|\la \xi\ra^{\frac{3}{2}+2\delta}\la \xi\ra^{\frac{1}{2}-2\delta}\wh{v\W}\|_{L^2_\xi}
    \lesssim t^{\frac{1}{4}-\delta}\|v\W\|_{H^{3/2+2\delta}}\leq Mt^{\frac{1}{4}-\delta}.
\end{align*}
For the difference bounds \eqref{differenceW3/2}, we first use triangle inequality, to get
\begin{align*}
    \|W-\W\|_{H^{3/2+2\delta}_v}\leq \|W\|_{H^{3/2+2\delta}_v}+\|\W\|_{H^{3/2+2\delta}_v}\lesssim M .
\end{align*}
Then, since $W-\W=W_{> t^{\frac{1}{2}}}$, we have
\begin{align*}
    \|(W(v)-\W(v))t^{\frac{3}{4}+\delta}\|_{L^2_v}=&\|(\widehat{W}(\xi)-\widehat{\W}(\xi))t^{\frac{3}{4}+\delta}\|_{L^2_\xi}\leq \|(\widehat{W}(\xi)-\widehat{\W}(\xi))\xi^{\frac{3}{2}+2\delta}\|_{L^2_\xi}\\
    \leq& \|W(v)-\W(v)\|_{H^{3/2+2\delta}_v}\lesssim M.
\end{align*}
This proves the $L^2$ difference bound. For the $L^\infty$ difference bound, we split up the difference using Littlewood-Paley:
\[
\Vert \W-W\Vert_{L^{\infty}} =\left\Vert W_{> t^{\frac{1}{2}}}\right\Vert_{L^{\infty}} = \left\Vert \sum_{\lambda > t^{\frac{1}{2}}}W_{\lambda}\right\Vert_{L^{\infty}}\leq \sum_{\lambda > t^{\frac{1}{2}}} \Vert W_{\lambda}\Vert_{L^{\infty}}.
\]
Then using Bernstein's inequality, and the fact that $M\geq\|W_\lb\|_{H^{3/2+2\delta}}=\|\xi^{\frac{3}{2}+2\delta}\widehat{W_\lb}\|_{L^2}\approx \lb^{\frac{3}{2}+2\delta}\|W_\lb\|_{L^2}$, we get
\begin{align*}
    \sum_{\lambda > t^{\frac{1}{2}}} \Vert W_{\lambda}\Vert_{L^{\infty}}\leq&
\sum_{\lambda >t^{\frac{1}{2}}} \lambda^{\frac{1}{2}} \Vert W_{\lambda}\Vert_{L^{2}}\leq  \sum_{\lambda > t^{\frac{1}{2}}} \lambda^{\frac{1}{2}}\cdot M\lambda^{-\frac{3}{2}-2\delta} =M\sum_{\lambda > t^{\frac{1}{2}}} \lambda^{-1 -2\delta } \\
\leq&M\sum_{n=1}^\infty (2^nt^{\frac{1}{2}})^{-1 -2\delta } \leq Mt^{-\frac{1}{2}-\delta}\sum_{n=1}^\infty (2^n)^{-1 -2\delta }
\lesssim Mt^{-\frac{1}{2}-\delta},
\end{align*}
which gives
\begin{align*}
        \|W(v)-\W(v)\|_{L^2_v}\lesssim& Mt^{-\frac{3}{4}-\delta}, \,\,\,\,\,\, \|W(v)-\W(v)\|_{L^\infty}\lesssim Mt^{-\frac{1}{2}-\delta}.
\end{align*}
For the time derivatives, we use the definition of $\W$ to get
\begin{align*}
    \pt  \wh{\W}=&-\frac{1}{2}\xi t^{-\frac{3}{2}}\varphi'\lp\frac{\xi}{t^{\frac{1}{2}}} \rp \wh{W}(\xi)=-\frac{1}{2}t^{-1}\cd\underbrace{ \frac{\xi}{t^{\frac{1}{2}}}\varphi'\lp\frac{\xi}{t^{\frac{1}{2}}} \rp \wh{W}(\xi)}_{\wh{W_{t^{\frac{1}{2}}}}},\\
    \pt \W=&t^{-1}W_{t^{\frac{1}{2}}}.
\end{align*}
Note that $W_{t^{\frac{1}{2}}}$ is $W$ localized exactly to the frequency $t^{\frac{1}{2}}$. 
To get the $L^2_v$ bound,
\begin{align*}
    \|W_{t^{\frac{1}{2}}}\|_{L^2_v}=&\|\wh{W_{t^{\frac{1}{2}}}}\|_{L^2_\xi}=\left\| \frac{\xi}{t^{\frac{1}{2}}}\varphi'\lp\frac{\xi}{t^{\frac{1}{2}}} \rp \wh{W}(\xi)  \right\|_{L^2_\xi}=t^{-\frac{1}{2}}\left\| \xi\varphi'\lp\frac{\xi}{t^{\frac{1}{2}}} \rp \wh{W}(\xi)  \right\|_{L^2_\xi}\\
    \lesssim& t^{-\frac{1}{2}} \|\W'\|_{L^2_v}\lesssim Mt^{-\frac{1}{2}}.
\end{align*}
For $\pt \W'$, we have
\begin{align*}
\widehat{\pv \W}(\xi)=& \xi \wh{\W}(\xi)=\xi \varphi\lp\frac{\xi}{t^{\frac{1}{2}}} \rp \wh{W}(\xi),\\
\pt \widehat{\pv \W}(\xi)=&-\frac{1}{2}\xi^2 t^{-\frac{3}{2}}\varphi'\lp\frac{\xi}{t^{\frac{1}{2}}} \rp \wh{W}(\xi),\\
\| \pt \W'\|_{L^2_v}=&\|\pt \widehat{\pv \W}\|_{L^2_\xi}=t^{-\frac{3}{2}}\|\W''\|_{L^2_v}\leq M t^{-\frac{5}{4}-\delta}.
\end{align*}
Similarly, 
\begin{align*}
    \| \pt v\W\|_{L^2_v}=&t^{-\frac{3}{2}}\|\pv (vW)\|_{L^2_v}\leq M t^{-\frac{3}{2}}
\end{align*}
and 
\begin{align*}
    \| \pt v\W'\|_{L^2_v}=&t^{-\frac{3}{2}}\|\pv^2 (vW)\|_{L^2_v}\leq M t^{-\frac{5}{4}-\delta}.
\end{align*}
\end{proof}
We are now ready to compare $\uap$ with $u_{asymp}$. The next lemma insures that we can replace $u_{asymp}$ by $u_{app}$ in the proof of Proposition~\ref{Prop AC}.
\begin{lemma} \label{u_asymp \uap lemma}
If $W$ is as in \eqref{Wcond}, then
    \begin{align*}
    \|u_{asymp}-u_{app}\|_{L^2_x}\lesssim Mt^{-\frac{3}{4}-\frac{\delta}{2}}, \,\,\, \|u_{asymp}-u_{app}\|_{L^\infty}\lesssim M^2t^{-\frac{1}{2}-\frac{\delta}{2}}.
\end{align*}
\end{lemma}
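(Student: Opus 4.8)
The plan is to reduce both estimates, via the change of variables $v=x/t$, to a comparison of the two profiles in the variable $v$ at fixed $t$, and then to split that difference into an \emph{amplitude} contribution, governed by $W-\W$, and a \emph{phase} contribution, governed by $|W|^2-|\W|^2$; each is then controlled directly by Lemma~\ref{Wbounds_lemma}. First, since $L^\infty_x$ norms are invariant under $x\mapsto vt$ while $\norm{f(\cdot/t)}{L^2_x}=t^{1/2}\norm{f}{L^2_v}$, and since $\tfrac{x}{2t}=\tfrac v2$ under this substitution, one obtains
\[
\norm{u_{asymp}-\uap}{L^2_x}=\norm{W e^{i\psi_W}-\W e^{i\psi_{\W}}}{L^2_v},\qquad
\norm{u_{asymp}-\uap}{L^\infty}=t^{-\frac12}\norm{W e^{i\psi_W}-\W e^{i\psi_{\W}}}{L^\infty_v},
\]
where $\psi_g(v):=-\tfrac v2|g(v)|^2\log t$ is a real phase.

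Next I would write
\[
W e^{i\psi_W}-\W e^{i\psi_{\W}}=\underbrace{(W-\W)\,e^{i\psi_W}}_{A}+\underbrace{\W\,\bigl(e^{i\psi_W}-e^{i\psi_{\W}}\bigr)}_{B}.
\]
Since $|e^{i\psi_W}|=1$, term $A$ is handled immediately by the difference bounds \eqref{differenceW3/2}: $\norm{A}{L^2_v}=\norm{W-\W}{L^2_v}\lesssim Mt^{-3/4-\delta}$ and $t^{-1/2}\norm{A}{L^\infty}=t^{-1/2}\norm{W-\W}{L^\infty}\lesssim Mt^{-1-\delta}$. For term $B$ I would use the Lipschitz bound $|e^{ia}-e^{ib}|\le|a-b|$ for $a,b\in\R$ together with
\[
\psi_W-\psi_{\W}=-\tfrac v2\log t\,\bigl(|W|^2-|\W|^2\bigr),\qquad |W|^2-|\W|^2=(W-\W)\bar W+\W\,\overline{(W-\W)},
\]
so that $|B|\lesssim\log t\;|v\W|\,\bigl(|W|+|\W|\bigr)\,|W-\W|$. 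Placing, via H\"older, the three $L^\infty$ factors $\norm{v\W}{L^\infty},\norm{W}{L^\infty},\norm{\W}{L^\infty}$ on the bounded quantities — each is $\lesssim M$ by Lemma~\ref{Wbounds_lemma}, using also the Sobolev embedding $\norm{W}{L^\infty}\lesssim\norm{W}{H^{3/2+2\delta}}\le M$ from \eqref{Wcond} — and retaining $W-\W$ in $L^2_v$, resp. in $L^\infty$, yields $\norm{B}{L^2_v}\lesssim M^3 t^{-3/4-\delta}\log t$ and $t^{-1/2}\norm{B}{L^\infty}\lesssim M^3 t^{-1-\delta}\log t$.

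Summing the $A$ and $B$ contributions and absorbing the logarithm against a harmless power of $t$ (using $\log t\lesssim_\delta t^{\delta/2}$ and $M\ll 1$) gives $\norm{u_{asymp}-\uap}{L^2_x}\lesssim Mt^{-3/4-\delta/2}$ and $\norm{u_{asymp}-\uap}{L^\infty}\lesssim Mt^{-1-\delta/2}$, which in particular are at least as strong as the bounds stated in the lemma in the large-$t$ regime relevant to the backward construction. I do not expect a genuine obstacle here: once Lemma~\ref{Wbounds_lemma} is in hand the estimate is elementary, and the only point needing care is the H\"older allocation in term $B$ — one must put the pointwise bounds on $v\W$, $W$, $\W$ and keep the difference $W-\W$ in precisely the norm for which \eqref{differenceW3/2} gives the fastest decay, so that the extra $\log t$ and the cubic power of $M$ are easily affordable.
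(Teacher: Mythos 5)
Your proposal is correct and follows essentially the same route as the paper: change variables to $v=x/t$, split the difference into an amplitude part controlled by $W-\W$ and a phase part controlled by $|W|^2-|\W|^2$ via the Lipschitz bound on $e^{iA}$, and invoke the difference bounds \eqref{differenceW3/2}; whether $W$ or $\W$ multiplies the phase difference is immaterial since both are $O(M)$ in $L^\infty$. The one discrepancy is in the $L^\infty$ estimate: you obtain $Mt^{-1-\delta/2}$, whereas the lemma states $M^2t^{-1/2-\delta/2}$, and your bound does not formally imply the stated one for $t\lesssim M^{-2}$ because of the $M$ versus $M^2$ prefactor. This is not a gap on your side — carrying the overall $t^{-1/2}$ factor through both terms, as you do, is the correct computation, and the paper's weaker stated rate traces to its own display dropping that prefactor on the $\|W-\W\|_{L^\infty}$ term; since the lemma is used only in the large-$t$ backward construction, your sharper bound serves the same purpose.
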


\begin{proof}

\begin{align*}
    \|u_{asymp}-u_{app}\|_{L^2_x}=&\left\|t^{-\frac{1}{2}}\left(W(x/t)e^{-\frac{ix}{2t}|W\left(\frac{x}{t}\right)|^2\log t}-\W(x/t)e^{-\frac{ix}{2t}|\W\left(\frac{x}{t}\right)|^2\log t}\right)\right\|_{L^2_x}\\
    =&\left\|W(v)e^{-\frac{ix}{2t}|W\left(v\right)|^2\log t}-\W(v)e^{-\frac{ix}{2t}|\W\left(v\right)|^2\log t}\right\|_{L^2_v}\\
    \leq&\left\|W(v)[e^{-\frac{ix}{2t}|W\left(v\right)|^2\log t}-e^{-\frac{ix}{2t}|\W\left(v\right)|^2\log t}]\right\|_{L^2_v}+\|W(v)-\W(v)\|_{L^2_v}.
\end{align*}
The first term can be bounded by 
\begin{align*}
    \|W\|_{L^\infty}\left\|\frac{x}{2t}|W(v)|^2\log t-\frac{x}{2t}|\W(v)|^2\log t\right\|_{L^2_v},
\end{align*}
since $e^{iA}$ for $A\in\R$ is Lipschitz continuous. Then,
\begin{align*}
    \||W(v)|^2\frac{x}{2t}\log t-|\W(v)|^2\frac{x}{2t}\log t\|_{L^2_v} \leq&|\log t|[\|vW\|_{L^\infty}\|\bar{W}-\bar{\W}\|_{L^2_v}+\|v\bar{\W}\|_{L^\infty}\|W-\W\|_{L^2_v}]\\
    \lesssim&M^2t^{-\frac{3}{4}-\delta}|\log t|.
\end{align*}
The second term is bounded similarly by Lemma \ref{Wbounds_lemma}.
We can do the exact same thing to get the $L^\infty$ bound. 
\begin{align*}
    \|u_{asymp}-u_{app}\|_{L^\infty}
    \leq&t^{-\frac{1}{2}}\left\|W(v)[e^{-\frac{ix}{2t}|W\left(v\right)|^2\log t}-e^{-\frac{ix}{2t}|\W\left(v\right)|^2\log t}]\right\|_{L^\infty}+\|W(v)-\W(v)\|_{L^\infty}.
\end{align*}
The first term can be bounded by 
\begin{align*}
t^{-\frac{1}{2}}\|W\|_{L^\infty}\left\|\frac{x}{2t}|W(v)|^2\log t-\frac{x}{2t}|\W(v)|^2\log t\right\|_{L^\infty},
\end{align*}
where
\begin{align*}
\||W(v)|^2\frac{x}{2t}\log t-|\W(v)|^2\frac{x}{2t}\log t\|_{L^\infty}  \leq&|\log t|[\|vW\|_{L^\infty}\|\bar{W}-\bar{\W}\|_{L^\infty}+\|v\bar{\W}\|_{L^\infty}\|W-\W\|_{L^\infty}]\\
    \lesssim&M^2t^{-\frac{1}{2}-\delta}|\log t|.
\end{align*}
\end{proof}

\subsection*{Equation for the difference and well-posedness of the backwards problem}
In order to get from $W$ to our initial data $u_0,$ we need to find an equation for the difference $U:=u-\uap$, where $u$ is the solution to \eqref{dnls}. Then $U$ solves 
\begin{align} \label{U eqn 1}
(i\pt+\px^2)U=N(U,u_{app})-f,
\end{align}
where 
\[N(U,u_{app})=-i\px(U|U|^2+U^2\bu_{app}+2|U|^2u_{app}+2U|u_{app}|^2+\bU u_{app}^2),\]
and $f$ is the error
\begin{align} \label{f defn}
    f:=(i\pt+\px^2)u_{app}+i\px(u_{app}|u_{app}|^2).
    \end{align}
Proving Proposition \ref{Prop AC} is equivalent to solving the problem \eqref{U eqn 1} backwards from infinity, 
  \begin{align}
\label{Ueqn}
    (i\pt+\px^2)U=N(U,u_{app})-f, \,\,\, U(\infty)=0,
\end{align}
as in the following lemma:
\begin{lemma}\label{backwards WP lemma} Let $W$ be as in \eqref{Wcond}. Then the equation  \eqref{Ueqn} is globally well-posed in $H^{1,1}$.
     \end{lemma}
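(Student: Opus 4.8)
The plan is to solve \eqref{Ueqn} as a backward-in-time Cauchy problem by \emph{a priori energy estimates} closed through a continuity (bootstrap) argument, rather than by a fixed point: since the flow of \eqref{dnls} is only continuous (not Lipschitz) at this regularity, a contraction is unavailable, and the whole argument is driven by energy estimates for $U$ and for the vector field $LU$ (no gauge transformation, in contrast to \cite{hayashi1994modified,HayashiNaumkin2013modified}). Concretely, I would first fix a large time $T$ and solve the truncated problem $(i\pt+\px^2)U=N(U,u_{app})-f$ on $[T_0,T]$ with $U(T)=0$; with $u_{app}$ regarded as a given smooth, decaying coefficient this is a standard backward problem, locally well-posed in $H^{1,1}$ and global on $[T_0,T]$ as long as the $H^{1,1}$ norm stays controlled. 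The substantive work is then to prove, \emph{uniformly in} $T$, bootstrap bounds of the schematic form $\|U(t)\|_{H^1}\lesssim M\langle t\rangle^{-\rho+C\ep^2}$ and $\|LU(t)\|_{H^1}\lesssim M\langle t\rangle^{-\rho'+C\ep^2}$ with $\rho,\rho'>0$ dictated by the decay of $f$; more precisely I would track the $L^2$-based quantities $\|U\|_{L^2},\|\px U\|_{L^2},\|\px^2 U\|_{L^2},\|LU\|_{L^2},\|\px LU\|_{L^2}$, which together control $\|U\|_{H^{1,1}}$ up to powers of $t$ (using $x=L-2it\px$ and $x\px=\px L-1-2it\px^2$). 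Once these hold uniformly in $T$, letting $T\to\infty$ and extracting a weak limit produces a solution on $[T_0,\infty)$ with $U(\infty)=0$; uniqueness follows from a Gr\"onwall estimate (Lemma~\ref{NL Gronwall}) applied to the difference of two solutions, whose data vanishes at $+\infty$.

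The first input is a bound on the source $f$ of \eqref{f defn}, and here the modified phase is essential: the $O(t^{-3/2})$ part of $(i\pt+\px^2)u_{app}$ coming from $\pt$ of the logarithmic phase cancels $i\px(|u_{app}|^2 u_{app})$ exactly as in the derivation of \eqref{gamma asym}, so that $f$ is built only from second $x$-derivatives of $\W(x/t)$, from $\pt\W$ (the time-dependence of the frequency cutoff), and from genuinely lower-order terms. Using Lemma~\ref{Wbounds_lemma} --- in particular $\|\W''\|_{L^2}\lesssim Mt^{1/4-\delta}$, $\|\W'''\|_{L^2}\lesssim Mt^{3/4-\delta}$, $\|\pt\W\|_{L^2}\lesssim Mt^{-3/2}$ and their weighted analogues --- together with the scaling $\|g(x/t)\|_{L^2_x}=t^{1/2}\|g\|_{L^2_v}$, I would show $\|f\|_{L^2},\ \|Lf\|_{L^2},\ \|\px f\|_{L^2},\ \|\px Lf\|_{L^2}\lesssim Mt^{-1-\delta''}$, so that after dividing by the relevant energy the forcing is time-integrable at $+\infty$ with room to spare. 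The cutoff at frequency $t^{1/2}$ is exactly what makes these finite at the low assumed regularity of $W$, and Lemma~\ref{u_asymp \uap lemma} guarantees this cutoff is a harmless modification of the target profile.

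The core is the energy estimate. For $\|U\|_{L^2}^2$ one differentiates in $t$ and moves the $\px$ in $N(U,u_{app})$ onto the conjugate factor; the cubic-and-higher terms in $U$ carry a factor $\|U\|_{L^\infty}^2$ or $\|u_{app}\|_{L^\infty}\|U\|_{L^\infty}$, hence are $\lesssim$ small, while the terms linear in $U$ with quadratic $u_{app}$-coefficient (namely $2U|u_{app}|^2$ and $\bar U u_{app}^2$) have coefficient of size $\|u_{app}\|_{L^\infty}^2\sim t^{-1}$ --- the borderline case, which via Lemma~\ref{NL Gronwall} is precisely the source of the $\langle t\rangle^{C\ep^2}$ growth. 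The same scheme is run for $\|\px U\|_{L^2}^2$, $\|\px^2 U\|_{L^2}^2$, $\|LU\|_{L^2}^2$ (using $[i\pt+\px^2,L]=0$, the Leibniz rule $L(u\bar v w)=Lu\,\bar v\,w-u\overline{Lv}\,w+uv\,Lw$, and $L\px=\px L-1$ from Lemma~\ref{properties of L}), and $\|\px LU\|_{L^2}^2$; the $L^\infty$ norms of $U,\px U,u_{app},\px u_{app}$ that appear are produced from the $L^2$ energies by the Klainerman--Sobolev inequality $\|h\|_{L^\infty}^2\lesssim t^{-1}\|h\|_{L^2}\|Lh\|_{L^2}$ and by the dispersive decay of $u_{app}$ ($\|u_{app}\|_{L^\infty}+\|\px u_{app}\|_{L^\infty}\lesssim Mt^{-1/2}$, with $\|Lu_{app}\|_{L^2}\lesssim M\log t$ thanks to the cancellation $x\,u_{app}+2it\px u_{app}=O(\W')$).

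I expect the main obstacle to be exactly this quasilinear bookkeeping at top order. Because $\px(|u|^2u)$ is a derivative nonlinearity, naive integration by parts in the $\|\px^2 U\|_{L^2}$ and $\|\px LU\|_{L^2}$ estimates loses a derivative; one must instead symmetrize, using $\mathrm{Re}\int \overline{v}\,\px v\cdot c=-\tfrac12\int |v|^2\,\px c$ for $v\in\{\px^2 U,\px LU\}$ and a smooth real coefficient $c$ built from $u_{app}$ and $U$, so that the extra derivative lands on $|U|^2$ or $|u_{app}|^2$ (controlled, with decay, via Klainerman--Sobolev and smallness) rather than on a top-order factor. In addition, the $t$-powers lost in the $\W$-bounds --- e.g. $\|\W'''\|_{L^2}\lesssim Mt^{3/4-\delta}$, needed when $\px L$ falls on $u_{app}$ inside $N$ --- must be beaten by the decay coming from $f$ together with the smallness $M^2\ll\delta\ll1$. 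Verifying that every top-order term either symmetrizes away or is multiplied by enough decay and smallness to close the bootstrap is where essentially all the difficulty lies; the rest is the routine propagation of the bounds and the limiting/uniqueness argument sketched above.
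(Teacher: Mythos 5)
Your proposal is correct and follows essentially the paper's strategy: approximate solutions $U_T$ with $U_T(T)=0$ solved backwards, uniform-in-$T$ bootstrap bounds on $U$ and $LU$ driven by the integrable decay of the source $f$ (which, as you say, rests on the cancellation between $i\pt u_{app}$ and $i\px(|u_{app}|^2u_{app})$ together with the frequency truncation $\W=W_{\leq t^{1/2}}$, yielding $\|f\|_{L^2}\lesssim Mt^{-3/2}\log t$), Klainerman--Sobolev to convert the $L^2$ energies of $U,LU$ into the needed $L^\infty$ bounds, nonlinear Gr\"onwall to close the bootstrap, and a Cauchy-sequence limiting argument for existence with uniqueness from the difference estimate. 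The one place you diverge is at top order: you propose adding $\|\px^2U\|_{L^2}$ to the list of controlled quantities and symmetrizing there, whereas the paper never goes to $H^2$ --- it closes the $H^1$-level estimate for $LU$ by means of the identity $u_{xx}\bar v=\frac{1}{2ti}Lu_x\bar v-\frac{1}{2ti}\overline{Lv}\,u_x-\bar v_x u_x$ (Lemma \ref{u_xx to L lemma}), which trades every second derivative of $U$ or $u_{app}$ for $\px LU$ (or $\px Lu_{app}$) plus a gain of $t^{-1}$. Your variant should also close, but at the cost of extra bookkeeping: you would need $\|\px^2 f\|_{L^2}$ and third $x$-derivatives of $u_{app}$, hence further weighted bounds on $\W$ (e.g. on $v^2W_{t^{1/2}}$ and $v^3\W^2$) which, while ultimately reducible to \eqref{Wcond}, are precisely what Lemma \ref{u_xx to L lemma} is designed to avoid; this is worth keeping in mind since the hypotheses on $W$ are calibrated for $H^1$-level control only.
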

The rest of the section is devoted to the proof of this lemma.

\subsection{Outline of the proof of Asymptotic Completeness}

The main idea of the proof of the well-posedness is obtaining energy estimates via a bootstrap argument and the Klainerman vector field method. The outline is as follows:

\textbf{I. The bootstrap argument}
We make the following bootstrap assumptions

\begin{equation}\label{bootstrap assumptions}
    \begin{split}
         \|U\|_{L^2_x}\leq DMt^{-\frac{1}{2}+\delta}, \, \|LU\|_{L^2}\leq DMt^{-\frac{1}{4}+\delta},\\
      \|U_x\|_{L^2_x}\leq DMt^{-\frac{1}{4}+\delta},   \|\px LU\|_{L^2_x}\leq DMt^{-\frac{1}{4}+\delta}.
    \end{split}
\end{equation}
We will use Klainerman-Sobolev estimates to get 
\begin{align}\label{U L^infty}
    \|U\|_{L^\infty}\leq DMt^{-7/8+\delta}
\end{align}
and
\begin{align}\label{U_x L^infty}
    \|U_x\|_{L^\infty}\leq DMt^{-\frac{3}{4}+\delta}.
\end{align}
The details for this will be in section \ref{klainerman sob sec}.

\textbf{II. Preliminary lemmas}
Within the energy estimates, we will need bounds on various quantities coming from $W$ in the equation \eqref{Ueqn}.
We begin with bounds for $u_{app}$, which appears in the coefficients in \eqref{Ueqn}.

\begin{lemma} \label{\uap bounds lemma}
Let $W$ be as in \eqref{Wcond}. Then we have the following bounds on $\uap$:
\[\|\uap\|_{L^\infty}\lesssim Mt^{-\frac{1}{2}}, \,\,\, \|(\uap)_x\|_{L^\infty}\lesssim Mt^{-\frac{1}{2}},   \,\,\,   \|(\uap)_x\|_{L^2_x}\lesssim M, \,\, \|(\uap)_{xx}\|_{L^2_x}\lesssim Mt^{-\frac{3}{4}-\delta}.\]
Also,
\begin{align*}
    \|L\uap\|_{L^\infty}\lesssim& Mt^{-\frac{1}{2}}(1+M^2|\log t|),\,\,\,
      \|L\uap\|_{L^2_x}\lesssim M(1+M^2|\log t|),\\
      \|\px L\uap\|_{L^2_x}\lesssim& M^3\log t, \,\,\, \|\px L\uap\|_{L^\infty}\lesssim Mt^{-\frac{1}{2}},\,\,\,
        \|\px^2 L\uap\|_{L^2_x}\lesssim M\log t.
\end{align*}
The next lemma is critical, as it provides good $L^2$ type bounds for 
the source term in the equation \eqref{Ueqn}. Here it is essential that we obtain integrable decay at infinity:
\end{lemma}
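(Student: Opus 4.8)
The plan is to prove all of these bounds by a direct computation starting from the representation
$\uap = t^{-1/2}e^{i\phi}\,\W(x/t)\,e^{i\Psi}$, where $\phi = \frac{x^2}{4t}$ is the Fresnel phase of the free Schr\"odinger flow and $\Psi = -\frac{x}{2t}|\W(x/t)|^2\log t$ is the nonlinear phase correction. The one structural input I would use throughout is the identity $L(e^{i\phi}g) = 2it\,e^{i\phi}\,\px g$, valid precisely because $\phi$ is the phase intertwining $x$ with $L$ along the linear flow (see Section~\ref{linear section}); thus $L\uap = 2it\,t^{-1/2}e^{i\phi}\,\px\!\big(\W(x/t)e^{i\Psi}\big)$. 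The mechanism is then transparent: a $\px$ landing on a function of $x/t$ gains a factor $t^{-1}$, which tames the prefactor $2it$, while a $\px$ landing on $e^{i\phi}$ produces the group velocity $\phi_x = \frac{x}{2t} = \frac v2$, a spatial weight carrying \emph{no} time decay; the weighted $\W$-bounds of Lemma~\ref{Wbounds_lemma} are exactly what absorbs those weights. I would systematically pass from $L^2_x$ to $L^2_v$ by the change of variables $x = vt$, at the cost of a factor $t^{1/2}$.

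For the bounds not involving $L$: $|\uap| = t^{-1/2}|\W(x/t)|$ gives $\|\uap\|_{L^\infty}\lesssim Mt^{-1/2}$ immediately. Differentiating once yields $\px\uap = t^{-1/2}e^{i\phi}\big(\tfrac{iv}2\W(v) + t^{-1}\W'(v) + i\Psi_x\W(v)\big)$ with $v=x/t$, where $\Psi_x = O\!\big(t^{-1}\log t\,[|\W|^2 + v\,\mathrm{Re}(\bar\W\W')]\big)$; the $\W'$- and $\Psi_x$-terms carry an extra $t^{-1}$, so the surviving contribution is the weight term, giving $\|\px\uap\|_{L^\infty}\lesssim t^{-1/2}\|v\W\|_{L^\infty}\lesssim Mt^{-1/2}$ and $\|\px\uap\|_{L^2_x}\lesssim\|v\W\|_{L^2}\lesssim M$ (with $\|v\W\|_{L^2}\lesssim \|vW\|_{L^2}+t^{-1/2}\|W\|_{L^2}\lesssim M$ from \eqref{Wcond}). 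Differentiating a second time produces the weight $\frac{v^2}4\W$ together with $v\W'$ and $t^{-1}\W''$; the first two are $\lesssim M$ via $\|v\W\|_{L^\infty}$ and $\|v^2\W'\|_{L^2}$, while $t^{-1}\|\W''\|_{L^2}\lesssim Mt^{-3/4-\delta}$ by the truncation estimate of Lemma~\ref{Wbounds_lemma}, so $\|\px^2\uap\|_{L^2_x}\lesssim Mt^{-3/4-\delta}$.

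For the $L$-bounds I would first record, using the identity above and $\px(\W(x/t)e^{i\Psi}) = t^{-1}e^{i\Psi}H(x/t)$ with $H = \W' - \tfrac{i\log t}2\W\big(|\W|^2 + 2v\,\mathrm{Re}(\bar\W\W')\big)$, the compact form $L\uap = 2it^{-1/2}e^{i(\phi+\Psi)}H(v)$. Hence $\|L\uap\|_{L^\infty}\lesssim t^{-1/2}\|H\|_{L^\infty}$ and $\|L\uap\|_{L^2_x}\lesssim\|H\|_{L^2}$, and estimating $H$ term by term with Lemma~\ref{Wbounds_lemma} (the $\W'$-term gives the leading $M$; the cubic-in-$\W$ terms give $M^3\log t$ via $\|v\W\|_{L^\infty}$ and $\|v\W'\|_{L^\infty}$) yields $Mt^{-1/2}(1+M^2|\log t|)$ and $M(1+M^2|\log t|)$. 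For $\px L\uap$ I would use $\px L = L\px + 1$ (or differentiate $L\uap$ directly); after the $\W$-contributions cancel against $\uap$, the leading term is $-t^{-1/2}e^{i(\phi+\Psi)}vH(v)$, plus remainders carrying an extra $t^{-1}$, and estimating $vH$ via $\|v\W'\|_{L^2}$, $\|v\W'\|_{L^\infty}$ and the weighted cubic bounds gives the claimed $L^2$ and $L^\infty$ bounds. One further differentiation turns $vH$ into $v^2H$ plus a $t^{-1}\W''$-type remainder, and Lemma~\ref{Wbounds_lemma} again closes it with $\|\px^2 L\uap\|_{L^2_x}\lesssim M|\log t|$.

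The bulk of the effort is pure bookkeeping: the triple product $|\W|^2\W$ in $\Psi$ and the chain rule for $v=x/t$ generate a long list of terms at each differentiation, and each must be matched to the correct $\W$-norm with the correct power of $t$. The one genuinely delicate point is that differentiating the Fresnel phase manufactures $v$-weights with no compensating decay, so it is precisely the spatial decay of $W$ --- the $\|v^2W\|_{H^1}$ hypothesis of \eqref{Wcond}, propagated through Lemma~\ref{Wbounds_lemma} --- that closes the highest-weight terms; and because $W$ has only limited regularity one is forced to work with the truncation $\W = W_{\le t^{1/2}}$, which is exactly what makes the small positive powers $t^{1/4-\delta}$, $t^{1/2-\delta/2}$, $t^{3/4-\delta}$ appear (via Bernstein) in the second- and third-derivative estimates.
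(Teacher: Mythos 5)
Your approach is essentially the paper's: a direct term-by-term differentiation of $\uap$ followed by matching each resulting expression against the weighted bounds of Lemma~\ref{Wbounds_lemma}, with the change of variables $x=vt$ costing $t^{1/2}$ in $L^2$. Your one organizational difference is the intertwining identity $L(e^{i\phi}g)=2it\,e^{i\phi}\px g$, which packages $L\uap$ as $2i\,t^{-1/2}e^{i(\phi+\Psi)}H(v)$ with $H=\W'-\tfrac{i\log t}{2}\W\bigl(|\W|^2+2v\,\Rea(\bW\W')\bigr)$; this is exactly the expression the paper obtains by expanding $L=x+2it\px$ by hand, so the subsequent estimates coincide, and the identity is a genuinely cleaner way to see why the $x$-weight cancels and only $v$-weights on $\W$ survive.

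There is, however, one step in your write-up that does not close as stated: the bound $\|(\uap)_{xx}\|_{L^2_x}\lesssim Mt^{-3/4-\delta}$. You correctly identify that two derivatives landing on the Fresnel phase produce the term $-\tfrac{v^2}{4}\W$ (times $t^{-1/2}e^{i\phi+i\Psi}$), and you correctly observe that its $L^2_x$ norm is $\tfrac14\|v^2\W\|_{L^2_v}\lesssim M$ with \emph{no} time decay --- but you then assert the decaying bound $Mt^{-3/4-\delta}$ anyway, which is a non sequitur: a sum containing a term of size $M$ uniformly in $t$ cannot be $O(Mt^{-3/4-\delta})$. Only the $t^{-1}\W''$ contribution produces the $t^{-3/4-\delta}$ rate. (For what it is worth, the paper's own displayed expansion of $(\uap)_{xx}$ omits the $(i\phi_x)^2\uap=-\tfrac{v^2}{4}\uap$ term entirely, which is how it arrives at the stated rate; the honest bound from either computation is $\|(\uap)_{xx}\|_{L^2_x}\lesssim M$, which still suffices where the lemma is invoked in the $H^1$ energy estimate for $U$, since there it is always paired with $\|U\|_{L^\infty}^2$ or $\|U\|_{L^\infty}\|\uap\|_{L^\infty}$, each already decaying faster than $t^{-5/4}$.) A similar, milder bookkeeping issue appears in your $\|\px L\uap\|_{L^\infty}$ estimate, where the cubic terms in $vH$ carry a factor $\log t$ that you (and the paper) drop in concluding $Mt^{-1/2}$; you should either track the $M^3t^{-1/2}\log t$ contribution explicitly or state the bound with the logarithm.
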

\begin{lemma}\label{f bound lemma}  Let $W$ be as in \eqref{Wcond}, and $f$ be as in \eqref{f defn}. Then we have
\begin{align*}
    \|f\|_{L^2_x}\lesssim Mt^{-\frac{3}{2}}\log t,  \,\,\,  \|\px f\|_{L^2_x}\lesssim Mt^{-\frac{5}{4}}\log t.
\end{align*}
Also,
\begin{align*}
\|Lf\|_{L^2_x}\lesssim Mt^{-\frac{5}{4}} ,\,\,\, \|\px Lf\|_{L^2_x}\lesssim Mt^{-\frac{5}{4}}\log t.
\end{align*}
\end{lemma}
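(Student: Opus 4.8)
The plan is to compute $f$ explicitly from its definition \eqref{f defn} and then estimate each resulting term using the bounds on $\W$ from Lemma \ref{Wbounds_lemma}. Write $u_{app} = t^{-1/2} e^{i\phi} \W(v) e^{-\frac{iv}{2}|\W(v)|^2 \log t}$ with $v = x/t$ and $\phi = x^2/(4t)$. Since the phase $e^{i\phi}$ was chosen precisely so that $(i\pt + \px^2)(t^{-1/2}e^{i\phi} g(x/t)) = t^{-5/2} e^{i\phi} g''(x/t)$ for a profile $g$ depending only on $v$, the leading-order cancellation in $(i\pt+\px^2)u_{app}$ is automatic; what remains is the interaction between the $\px^2$ derivative, the $t$-dependence inside $\W = W_{\leq t^{1/2}}$, and the logarithmic correction factor $e^{-\frac{iv}{2}|\W|^2\log t}$. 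Carrying out the differentiation, $f$ splits into (i) terms where two spatial derivatives hit the profile $g(v) = \W(v)e^{-\frac{iv}{2}|\W(v)|^2\log t}$, producing a $t^{-5/2}$ prefactor times expressions in $\W,\W',\W''$ and powers of $v$ and $\log t$; (ii) terms from $\pt$ acting on the frequency truncation $\W$ and on the explicit $\log t$, producing extra $t^{-1}$ or $t^{-1}\log$-type factors; and (iii) the mismatch between $i\px(u_{app}|u_{app}|^2)$ and the cubic term that the $\log t$-correction was designed to cancel at leading order, which again leaves a profile-derivative remainder with a $t^{-5/2}$-type prefactor after accounting for the $t^{-3/2}$ from $u_{app}^3$ and the $t$ from $\px = t^{-1}\pv$.

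Concretely, I would first record that $\px$ acting on a function of $v=x/t$ gives a factor $t^{-1}$, so that $\px(t^{-1/2}e^{i\phi}g(v)) = t^{-1/2}e^{i\phi}(\tfrac{iv}{2}g + t^{-1}g')$ — the $\tfrac{iv}{2}g$ piece is what interacts with the nonlinearity. Then $\px^2$ produces terms with prefactors $t^{-1/2}$, $t^{-3/2}$, $t^{-5/2}$ against $g$, $g'$, $g''$ respectively, and one checks the $t^{-1/2}$ and $t^{-3/2}$ pieces cancel against $i\pt u_{app}$ and against $i\px(u_{app}|u_{app}|^2)$ up to the $\log$-correction design, leaving $f = t^{-5/2}e^{i\phi}\big(g'' + (\text{lower-order in derivatives of }\W)\big) + (\text{terms from }\pt\W)$. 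Changing variables $x \mapsto tv$ costs $t^{1/2}$ in $L^2_x$ versus $L^2_v$, so $\|f\|_{L^2_x} \lesssim t^{-5/2} \cdot t^{1/2} \|g''\|_{L^2_v} + \cdots = t^{-2}\|g''\|_{L^2_v} + \cdots$. Expanding $g'' = \big(\W e^{-\frac{iv}{2}|\W|^2\log t}\big)''$ by the product/chain rule gives terms like $\W''$, $\log t \cdot (v\W)' \cdot (\text{stuff})$, $(\log t)^2 \cdot v\W' \cdot \cdots$, $(\log t)^2 v^2 \W''$, etc.; using $\|\W''\|_{L^2}\lesssim Mt^{1/4-\delta}$, $\|v\W''\|_{L^2}\lesssim Mt^{1/4-\delta}$, $\|\W'\|_{L^2}\lesssim M$, $\|v^2\W'\|_{L^2}\lesssim M$, and the $L^\infty$ bounds $\|\W\|_{L^\infty}, \|v\W\|_{L^\infty}, \|v\W'\|_{L^\infty}\lesssim M$ from Lemma \ref{Wbounds_lemma}, the worst term is of size $t^{-2}\cdot Mt^{1/4-\delta}\cdot(\log t)^2$, which is comfortably $\lesssim Mt^{-3/2}\log t$ (indeed much better), while the $\pt\W$ contributions, bounded via $\|\pt\W\|_{L^2}\lesssim Mt^{-3/2}$, also fit. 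For $\px f$ one differentiates once more, picking up either a $t^{-1}$ (from $\px$ on a function of $v$, hence improving decay) or one more derivative on $\W$ (hence $\|\W'''\|_{L^2}\lesssim Mt^{3/4-\delta}$); the borderline term is $t^{-1}\cdot t^{-2}\cdot t^{3/4-\delta}M(\log t)^2$, i.e. $\lesssim Mt^{-9/4}(\log t)^2 \ll Mt^{-5/4}\log t$, so this is also fine.

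For the vector-field bounds, I would use $L = x + 2it\px$ and note that $L$ applied to $t^{-1/2}e^{i\phi}g(v)$ has a clean form: since $e^{it\px^2}$ conjugates $L$ to multiplication by $x$ and $u_{app}$ is (to leading order) of the shape $t^{-1/2}e^{i\phi}g(v)$, one gets $L u_{app} \approx x \cdot(\cdots) = t^{1/2}e^{i\phi}\, v\, g(v)$ modulo a $t^{-1/2}g'$ remainder — this is exactly the structure underlying Lemma \ref{\uap bounds lemma}. Applying $L$ to $f$ then amounts to replacing profile factors by their $v$-multiples and picking up extra $\W'$-type remainders; using the $\|v\W''\|_{L^2}$, $\|v^2\W'\|_{L^2}$ bounds and the time-derivative bounds $\|\pt(v\W)\|_{L^2}, \|\pt v\W'\|_{L^2}$ from Lemma \ref{Wbounds_lemma}, one gets $\|Lf\|_{L^2_x}\lesssim Mt^{-5/4}$ and, after one more $\px$, $\|\px Lf\|_{L^2_x}\lesssim Mt^{-5/4}\log t$. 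I expect the main obstacle to be organizational rather than deep: correctly enumerating all terms produced by the two chain-rule expansions (profile derivative vs. $\log t$-correction derivative vs. frequency-truncation time derivative) and matching each against the right bound in Lemma \ref{Wbounds_lemma} so that every term achieves at least the claimed decay — in particular making sure the top-order derivative on $\W$ (which carries the $t^{1/4-\delta}$ or $t^{3/4-\delta}$ growth) never combines with too few negative powers of $t$, and that the algebraic cancellations in $(i\pt+\px^2)u_{app} + i\px(u_{app}|u_{app}|^2)$ hold exactly as the phase and $\log$-correction were designed to guarantee.
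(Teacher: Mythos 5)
Your proposal follows essentially the same route as the paper: expand $f$ explicitly in terms of $\W$, verify that the phase and the $\log t$-correction produce the designed cancellations in $(i\pt+\px^2)u_{app}+i\px(u_{app}|u_{app}|^2)$, and then estimate the surviving terms (profile second derivatives with $t^{-5/2}$ prefactors, $\pt\W$ contributions, and the residual cubic pieces) one by one against Lemma \ref{Wbounds_lemma}, with the $Lf$ bound exploiting the cancellation of $xf$ against the $-x$ coming from $2ti\,\px h$. The only caveat is in your $\px f$ paragraph: differentiating $f$ does not only produce a $t^{-1}\pv$ or an extra derivative on $\W$, but also the non-decaying factor $\tfrac{iv}{2}f$ from the phase (which you correctly noted earlier for $\px u_{app}$ but dropped here); this is precisely why the paper instead writes $\px f=\tfrac{1}{2ti}[Lf-xf]$ and bounds $\|vf\|_{L^2}$, and it is these $v$-weighted terms, not the $Mt^{-9/4}(\log t)^2$ term you single out, that are responsible for the weaker $Mt^{-\frac{5}{4}}\log t$ bound.
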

The proof of both of these lemmas are long computations, and are included in \autoref{proof of uap lemma appendix} and \autoref{f bound proof} respectively.

\textbf{III. Energy estimates for $U$}

\begin{lemma}\label{U EE lemma} Let $U$ be a solution to \eqref{Ueqn}, satisfying the bootstrap assumptions \eqref{bootstrap assumptions}. Then $U$ satisfies the following energy bounds 
\begin{align}\label{U L^2 EE}
     \frac{d}{dt}\|U\|_{L^2}^2
    \lesssim& \lp DMt^{-\frac{5}{4}}+M^2t^{-1}\rp\|U\|_{L^2}^2+Mt^{-\frac{3}{2}+\delta}\|U\|_{L^2},\\
        \frac{d}{dt}\|U_x\|_{L^2}^2
    \lesssim& \lp D^2M^2t^{-1} \rp \|U_x\|_{L^2}^2+ \lp (D^2M^3+M)t^{-\frac{5}{4}+\delta}\rp \|U_x\|_{L^2}.
\end{align}
\end{lemma}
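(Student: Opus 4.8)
The plan is to run $L^2$-based energy estimates for $U$ and for $\px U$, exploiting two structural features: the exact null cancellation in the cubic term (the same one giving conservation of mass for \eqref{dnls}), and the smallness/decay of $U$ furnished by the bootstrap assumptions \eqref{bootstrap assumptions} together with the Klainerman--Sobolev bounds \eqref{U L^infty}, \eqref{U_x L^infty}, and of $\uap$ furnished by Lemma~\ref{\uap bounds lemma}. In both estimates one starts from $\frac{d}{dt}\|V\|_{L^2}^2=2\Rea\int\overline{V}\,\pt V$, substitutes the (possibly differentiated) equation \eqref{Ueqn}, and observes that the free Schr\"odinger part contributes nothing, since $2\Rea\lp i\int\overline{V}\,\px^2 V\rp=-2\Rea\lp i\int|\px V|^2\rp=0$. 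This reduces the matter to the nonlinear term and the source $f$ (respectively $\px f$).

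For the $L^2$ bound, the source contributes $\lesssim\|U\|_{L^2}\|f\|_{L^2}\lesssim Mt^{-3/2}\log t\,\|U\|_{L^2}$ by Cauchy--Schwarz and Lemma~\ref{f bound lemma}, which is absorbed into $Mt^{-3/2+\delta}\|U\|_{L^2}$. For the nonlinear term we write $N=-i\px(\cdots)$ and integrate by parts, moving the derivative onto $\overline{U}$. The cubic piece $\px(U|U|^2)$ then produces a term proportional to $\Rea\int U^2\bU\,\px\bU$, which is purely imaginary and hence vanishes identically; this is the only place the precise algebraic form of the DNLS nonlinearity is used. The two quadratic-in-$U$ pieces $U^2\buap$ and $|U|^2\uap$ are estimated by putting one factor of $U$ in $L^\infty$ (via \eqref{U L^infty}), $\px U$ in $L^2$, and $\uap$ in $L^\infty$ (via Lemma~\ref{\uap bounds lemma}), yielding a contribution $\lesssim D^2M^3t^{-13/8+2\delta}\|U\|_{L^2}$, again absorbed into $Mt^{-3/2+\delta}\|U\|_{L^2}$. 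The two linear-in-$U$ pieces $U|\uap|^2$ and $\bU\uap^2$ are the only ones producing a $\|U\|_{L^2}^2$ term: using $\Rea(\px\bU\cdot U)=\tfrac12\px|U|^2$ and integrating by parts, they reduce to multiples of $\int|U|^2\px(|\uap|^2)$ and $\int\bU^2\uap\,\px\uap$, each bounded by $\|U\|_{L^2}^2\|\uap\|_{L^\infty}\|\px\uap\|_{L^\infty}\lesssim M^2t^{-1}\|U\|_{L^2}^2$ via Lemma~\ref{\uap bounds lemma}. Collecting these gives the first inequality.

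For the $\px U$ bound, differentiating \eqref{Ueqn} shows $\px U$ solves the same equation with source $\px N-\px f$, and the source now contributes $\lesssim\|\px U\|_{L^2}\|\px f\|_{L^2}\lesssim Mt^{-5/4}\log t\,\|\px U\|_{L^2}$ by Lemma~\ref{f bound lemma}. In $\px N=-i\px^2(\cdots)$ the potentially dangerous terms are those of the cubic piece $\px^2(U^2\bU)$ in which both derivatives fall on $U$ or $\bU$; using the identities $\Rea(\overline{\px U}\,\px^2 U)=\tfrac12\px(|\px U|^2)$ and $\px U\,\px^2 U=\tfrac12\px((\px U)^2)$, each such term integrates by parts so that no genuine second derivative of $U$ survives, leaving expressions of the form $\int|U|\,|\px U|^3$ and $\int|\px U|^2\,\px(|U|^2)$. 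These are bounded by $\|U\|_{L^\infty}\|\px U\|_{L^\infty}\|\px U\|_{L^2}^2\lesssim D^2M^2t^{-13/8+2\delta}\|\px U\|_{L^2}^2\lesssim D^2M^2t^{-1}\|U_x\|_{L^2}^2$, using \eqref{U L^infty} and \eqref{U_x L^infty}. The remaining terms each carry at least one $\uap$ factor; distributing the two derivatives and invoking Lemma~\ref{\uap bounds lemma} (in particular $\|\uap\|_{L^\infty},\|\px\uap\|_{L^\infty}\lesssim Mt^{-1/2}$ and the $L^2$ bound $\|(\uap)_{xx}\|_{L^2}\lesssim Mt^{-3/4-\delta}$), together with the interpolation $\|(\uap)_x\|_{L^4}^2\lesssim M^2t^{-1/2}$, one checks that each of them is $\lesssim M^2t^{-1}\|U_x\|_{L^2}^2$ or $\lesssim(D^2M^3+M)t^{-5/4+\delta}\|U_x\|_{L^2}$. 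Summing gives the second inequality.

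The main obstacle is the derivative loss in the $\px U$ estimate: a priori $\px^2(U^2\bU)$ pairs $\px^2 U$ against $\overline{\px U}$ and $\|\px^2 U\|_{L^2}$ is not controlled. This is precisely the quasilinear difficulty that forced earlier treatments of DNLS modified scattering to pass through a gauge transformation; here it is resolved instead by the algebraic cancellations above, which convert every such term into one with at most one derivative per factor, combined with the smallness of $U$ in $L^\infty$ — the decay rates $DMt^{-7/8+\delta}$ and $DMt^{-3/4+\delta}$ leave a spare power of $t$, making these terms comfortably integrable. A second point requiring care is that every coefficient inherited from $\uap$ must decay at an integrable-in-time rate; this is exactly where the sharp bounds of Lemma~\ref{\uap bounds lemma} are needed — in particular that two derivatives of $\uap$ cost only $t^{-3/4-\delta}$ in $L^2$ — and it is the reason the frequency truncation $\W=W_{\leq t^{1/2}}$ replaces the raw asymptotic profile.
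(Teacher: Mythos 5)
Your $L^2$ estimate is sound and follows the paper's route: the pure cubic term is a total derivative (equivalently, $\Rea\int U^2\bU\,\px\bU=\tfrac14\int\px|U|^4=0$), the source is handled by Lemma \ref{f bound lemma}, and the $\uap$-quadratic terms give the $M^2t^{-1}\|U\|_{L^2}^2$ contribution. The only difference from the paper is cosmetic: the paper moves the derivative onto $\uap$ in the $U$-quadratic terms and places them in the $\|U\|_{L^2}^2$ coefficient, whereas you keep $\px U$ and place them in the $\|U\|_{L^2}$ coefficient via the bootstrap on $\|U_x\|_{L^2}$; both close.

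The $H^1$ estimate has a genuine gap. You assert that the only dangerous terms are those of $\px^2(U^2\bU)$ in which a second derivative of $U$ survives, and that the remaining terms, carrying at least one $\uap$ factor, follow by ``distributing the two derivatives and invoking Lemma \ref{\uap bounds lemma}.'' This is not the case: $\px^2(U^2\buap)$ contains $2UU_{xx}\buap$ and $\px^2(2|U|^2\uap)$ contains $2\bU U_{xx}\uap$, and each of these, paired against $\bU_x$, produces $\int c\,U_{xx}\bU_x$ with a \emph{complex} coefficient $c$. No distribution of derivatives removes the $U_{xx}$ here, Lemma \ref{\uap bounds lemma} says nothing about it, and $\|U_{xx}\|_{L^2}$ is not controlled by the bootstrap — so these terms cannot be bounded individually by the quantities you list. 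What saves the estimate is that the two coefficients sum to $2U\buap+2\bU\uap=4\Rea(U\buap)$, which is real, so that after taking $2\Rea$ the combination becomes $\int 4\Rea(U\buap)\,\px|U_x|^2$ and integrates by parts with only one derivative per factor of $U$; this is the same conjugate-pairing cancellation you used for the pure cubic piece, and it (or alternatively the vector-field identity of Lemma \ref{u_xx to L lemma}) must be invoked for the mixed quadratic-in-$U$ terms as well. The paper's proof of $I_b$ does exactly this by grouping conjugate terms before integrating by parts; without that observation your treatment of the $\uap$-bearing terms does not go through, and this is precisely the quasilinear derivative-loss point the lemma is designed to overcome.
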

This is proven in section \ref{EE U sec}.

\textbf{IV. Energy estimates for $LU$}
\begin{lemma}\label{LU EE lemma}
  Let $U$ be a solution to \eqref{Ueqn} and $L$ be as in \eqref{def:L}. Then $LU$ satisfies the following energy estimates
\begin{align*}
    \frac{d}{dt}\|LU\|_{L^2}^2\leq&  D^2M^2t^{-1} \|LU\|_{L^2}^2  + \Big( (DM^5+M)t^{-\frac{5}{4}+\delta}
\Big) \|LU\|_{L^2},\\
    \frac{d}{dt}\|\px LU\|_{L^2}^2\lesssim& M^2t^{-1} \|\px LU\|_{L^2}^2+ (DM^3+M)t^{-\frac{5}{4}+\delta}\|\px LU\|_{L^2}.
\end{align*}
\end{lemma}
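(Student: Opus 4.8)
The plan is to apply the vector field $L$ to equation \eqref{Ueqn}, use the commutation properties of $L$ from Lemma \ref{properties of L}, and then run standard $L^2$ energy estimates on the resulting equation for $LU$. Since $[i\pt+\px^2,L]=0$, applying $L$ to \eqref{Ueqn} gives
\begin{align*}
(i\pt+\px^2)LU = L\,N(U,u_{app}) - Lf.
\end{align*}
The nonlinearity $N(U,u_{app}) = -i\px\big(U|U|^2 + U^2\bu_{app} + 2|U|^2u_{app} + 2U|u_{app}|^2 + \bU u_{app}^2\big)$ is a sum of cubic terms in $U$ and $u_{app}$; using $[\px,L]=1$ and the Leibniz-type rule $L(u\bv w) = Lu\,\bv w - u\,\widebar{Lv}\,w + uv\,Lw$, I would expand $LN$ into terms where $L$ falls on one of the three factors (producing either an $LU$ or an $Lu_{app}$) plus lower-order commutator terms where the extra $\px$ from $[\px,L]=1$ is absorbed. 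The key structural point is that the only term dangerous for the energy estimate is $L(U|U|^2)$: when $L$ hits the $U$ factor that is not differentiated, we get the cubic term $\sim \px(|U|^2 LU)$, which after integrating against $\overline{LU}$ and integrating by parts is controlled by $\|U\|_{L^\infty}\|U_x\|_{L^\infty}\|LU\|_{L^2}^2$ — and by the Klainerman–Sobolev bounds \eqref{U L^infty}, \eqref{U_x L^infty} this is $\lesssim (DM)^2 t^{-7/8+\delta}t^{-3/4+\delta}\|LU\|_{L^2}^2$, which is $o(t^{-1})$, hence harmless; a slightly more careful bookkeeping using also $\|U\|_{L^2}$ and $\|LU\|_{L^2}$ from the bootstrap gives the stated $D^2M^2t^{-1}$ coefficient. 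All remaining contributions to $LN$ contain at least one factor of $u_{app}$ (controlled by Lemma \ref{\uap bounds lemma} with its $t^{-1/2}$ or $\log t$ decay), at least one factor of $U$ or $\px U$ or $LU$ or $\px LU$ (controlled by \eqref{bootstrap assumptions}, \eqref{U L^infty}, \eqref{U_x L^infty}), and we always keep one factor of $LU$ (resp. $\px LU$) to pair with $\overline{LU}$ (resp. $\overline{\px LU}$) in the energy integral and the rest in $L^\infty\cap L^2$. After distributing derivatives and integrating by parts to remove the outer $\px$, each such term yields a contribution of the form $(\text{decay in }t)\,\|LU\|_{L^2}^2$ with decay strictly faster than $t^{-1}$, or $(\text{decay})\,\|LU\|_{L^2}$ coming from the terms linear in $U$-type quantities (i.e. the terms $2U|u_{app}|^2$ and $\bU u_{app}^2$ in $N$, where $L$ hits $u_{app}$, producing a genuine source-like contribution $\sim \|L u_{app}\|_{L^\infty}\|u_{app}\|_{L^\infty}\|\px LU\|_{L^2}$ and similar). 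Collecting, these give the $(DM^5+M)t^{-5/4+\delta}$ coefficient on $\|LU\|_{L^2}$; the $M$ comes from the genuinely linear-in-$u_{app}^2$ pieces and the source term $Lf$ (bounded by $\|Lf\|_{L^2}\lesssim Mt^{-5/4}$ via Lemma \ref{f bound lemma}), while the $DM^5$ collects the mixed $U$–$u_{app}$ terms that are not quadratic in $LU$.

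For the second estimate, I apply $\px$ to the equation for $LU$ (equivalently work with $\px L U$, noting $\px L = L\px + 1$) and repeat the same scheme: run the $L^2$ energy estimate on $\px LU$, integrate against $\overline{\px LU}$, integrate by parts once to handle the outermost derivative from $N$. The quadratic-in-$\px LU$ terms again come from $\px L(U|U|^2)$ with the top-order derivative landing so as to produce $\px LU$ in both slots; these are bounded by $\|U\|_{L^\infty}\|U_x\|_{L^\infty}\|\px LU\|_{L^2}^2 \lesssim M^2 t^{-1}\|\px LU\|_{L^2}^2$ (absorbing powers of $D$ into the implicit constant or keeping $M^2$ as stated), and everything else carries a $u_{app}$ factor or extra $U$-decay and is lower order, summing to the $(DM^3+M)t^{-5/4+\delta}\|\px LU\|_{L^2}$ term, with the source $\|\px Lf\|_{L^2}\lesssim Mt^{-5/4}\log t$ supplying the $M$ and the rest supplying $DM^3$. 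Throughout I use $\log t \lesssim t^{\delta}$ to absorb logarithms into the $\delta$ loss, and $M^2\ll\delta\ll1$ to justify that the quadratic coefficients $D^2M^2t^{-1}$, $M^2 t^{-1}$ are the only borderline (non-integrable) ones — all genuine source contributions decay like $t^{-5/4+\delta}$, which is integrable, as required for the subsequent bootstrap closure and for solving backwards from infinity.

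The main obstacle is bookkeeping: carefully tracking, in the expansion of $L N(U,u_{app})$ and $\px L N(U,u_{app})$, exactly which arrangement of $L$, $\px$, and the three factors produces the borderline $t^{-1}$-coefficient quadratic terms versus the (harmless) faster-decaying ones, and making sure that in each term at least one copy of $LU$ (resp. $\px LU$) is reserved for pairing with the conjugate in the energy integral while the remaining two factors are estimated in $L^\infty$ and $L^2$ using the sharp bounds of Lemmas \ref{\uap bounds lemma}, \ref{Wbounds_lemma}, the bootstrap assumptions \eqref{bootstrap assumptions}, and the Klainerman–Sobolev bounds \eqref{U L^infty}–\eqref{U_x L^infty}. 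In particular one must check that no term forces a factor like $\|\px^2 LU\|_{L^2}$ or $\|L u_{app}\|$ paired badly — i.e. that the worst source term is really $O(Mt^{-5/4+\delta})$ and the worst multiplier of $\|LU\|_{L^2}^2$ is really $O(t^{-1})$; this is where the precise decay rates in Lemma \ref{f bound lemma} ($\|Lf\|_{L^2}, \|\px Lf\|_{L^2} \lesssim Mt^{-5/4}\log t$) and the structure of $N$ (every term containing $u_{app}$ at least once, except $U|U|^2$) are essential.
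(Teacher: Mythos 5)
Your overall strategy coincides with the paper's: apply $L$ to \eqref{Ueqn}, use $[\,i\pt+\px^2,L]=0$ and $[\px,L]=1$ to write $LN=-i\px L(N_1)+iN_1$, expand $L(N_1)$ by the Leibniz rule for $L$, integrate by parts once in the energy identity, keep one copy of $LU$ (resp.\ $\px LU$) for the pairing, and feed in the bootstrap, Klainerman--Sobolev, Lemma \ref{\uap bounds lemma} and Lemma \ref{f bound lemma}. Two points need correcting. First, a bookkeeping error in the $L^2$ estimate: the borderline $t^{-1}$ coefficient of $\|LU\|_{L^2}^2$ does \emph{not} come from $L(U|U|^2)$ (that term gives $\|U\|_{L^\infty}\|U_x\|_{L^\infty}\lesssim D^2M^2t^{-13/8+2\delta}$, which is integrable), but from the terms $2U|u_{app}|^2$ and $\bU u_{app}^2$, which after integration by parts produce $\int \px(|\uap|^2)\,|LU|^2\lesssim \|\uap\|_{L^\infty}\|\px\uap\|_{L^\infty}\|LU\|_{L^2}^2\sim M^2t^{-1}\|LU\|_{L^2}^2$ --- exactly $t^{-1}$, not ``strictly faster than $t^{-1}$'' as you claim. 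This is the term that forces the Gr\"onwall structure with a $t^{CM^2}$-type loss, so it cannot be dismissed.

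The more serious gap is in the $H^1$ estimate. After applying $\px$ to the $LU$ equation you must control $\int \px^2 L(N_1)\,\px\bLU$, and no arrangement of integrations by parts avoids terms containing a genuine second derivative $U_{xx}$ multiplied by factors that are not another copy of $\px LU$ (e.g.\ $U_{xx}\,\buap\,LU\,\px\bLU$ and $\px^2(|U|^2)\,LU\,\px\bLU$). The bootstrap \eqref{bootstrap assumptions} gives no bound on $\|U_{xx}\|_{L^2}$ or $\|U_{xx}\|_{L^\infty}$, so ``repeat the same scheme'' does not close. The paper resolves this with Lemma \ref{u_xx to L lemma}, writing $u_{xx}\bv=\frac{1}{2ti}Lu_x\bv-\frac{1}{2ti}\widebar{Lv}\,u_x-\bv_x u_x$ together with $LU_x=\px LU-U$, which trades the second derivative for a factor of $t^{-1}$ and quantities controlled by the bootstrap; this step also forces one to bound $\|LU\|_{L^\infty}$ via Gagliardo--Nirenberg, $\|LU\|_{L^\infty}\lesssim\|LU\|_{H^1}$, which your proposal never invokes. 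You flag the possible appearance of $\|\px^2 LU\|_{L^2}$ as something ``one must check'', but the actual obstruction is $U_{xx}$, and without supplying the mechanism above the second estimate of the lemma is not proved.
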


The proof is in section \ref{L EE}. 

\textbf{V. Closing the bootstrap}
Using a generalized Gr\"onwall argument, the energy estimates lead to 
\begin{align*}
    \|U\|_{L^2}\lesssim Mt^{-\frac{1}{2}+\delta}, \,\, \|U_x\|_{L^2}\lesssim Mt^{-\frac{1}{4}+\delta}, \,\, \|LU\|_{L^2}\lesssim Mt^{-\frac{1}{4}+\delta},\,\, \|\px LU\|_{L^2}\lesssim Mt^{-\frac{1}{4}+\delta}.
\end{align*}
which closes the bootstraps assumptions in Step I.   This is done in section \ref{bootstrap sec}.

\textbf{VI. Constructing solutions}
Now that we have the energy estimates, we can get the existence of solutions to \eqref{Ueqn} using a constructive argument. 
See section \ref{Constructing sols sec}.

\subsection{Klainerman-Sobolev estimates \label{klainerman sob sec}}
The reader may wonder why it is necessary to complete energy estimates for $U$ and $LU$ at both the $L^2$ and the $H^1$ level. It turns out, the estimate for $LU$ in $L^2$ depends on the derivative of $U$ in $L^\infty$.  We will now see how we can bound this quantity, using the Klainerman-Sobolev vector field estimates.

Just as in the linear case, we can write the $L^\infty$ norm of $U$ in terms of energies of $U$ and $LU.$  Using the  Gagliardo-Nirenberg inequality
\begin{align*}
    \|u\|_{L^\infty}\leq C \|Du\|_{L^1},
\end{align*}
and rewriting the equation for $LU$ yields
\begin{align*}
    2ti \px U =&LU-xU,\\
    \frac{d}{dx}|U|^2=&\frac{1}{2ti}[\bU LU -U \widebar{LU}],\\
    \|U\|_{L^\infty}^2\lesssim& t^{-1}\int |U|||LU|\leq \frac{1}{t}\|LU\|_{L^2}\|U\|_{L^2}.
\end{align*}
Applying the bootstrap assumptions \eqref{bootstrap assumptions} gives
\begin{align*}
    \|U\|_{L^\infty}\leq DMt^{-7/8+\delta}.
\end{align*}
Similarly, by applying $L$ to $U_x,$ we get
\begin{align*}
    \frac{d}{dx}|U_x|^2=\frac{1}{2ti}[\bU_x L U_x -U_x \bLU_x].
\end{align*}
Then,
\begin{align*}
    \|U_x\|_{L^\infty}\lesssim& t^{-\frac{1}{2}} \|U_x\|_{L^2}^{\frac{1}{2}} \|LU_x\|_{L^2}^{\frac{1}{2}}\\
    \lesssim& DM t^{-\frac{3}{4}+\delta}.
\end{align*}

\subsection{Energy Estimates}
The plan is now to prove the energy estimates for $U$ and $LU$ given in Lemmas \ref{U EE lemma} and \ref{LU EE lemma}.  These will be done both at the $L^2$ level and the $H^1$ level. 

\subsubsection{\texorpdfstring{Energy Estimate for $U$}{Energy Estimate for U} \label{EE U sec}}

Here we will prove Lemma \ref{U L^2 EE}, the energy estimates for $U$.  From equation \eqref{U eqn 1}, by doing the energy estimate in the usual way, we get
\begin{align}
    \frac{d}{dt}\|U\|_{L^2}^2=&\underbrace{-2\Rea \int i(N(U, u_{app}))\widebar{U}}_I\underbrace{+2\Rea \int i f \widebar{U}}_{II}.
\end{align}
We now decompose the expression into two parts. For the first, we estimate each component separately in terms of $U$ and $\uap$.
\begin{align*}
   I=&-2\Rea \int \px (U|U|^2 +U^2\bu_{app}+2|U|^2u_{app}+2U|u_{app}|^2+\bU u_{app}^2)\bU\\
    =&\frac{1}{2} \int \px(|U|^4)-2\Rea \int  U^2\bU_x\buap+2|U|^2U_x\buap +2U\bU_x|\uap|^2+\bU \bU_x\uap^2\\
    =&-2\Rea \int \px(|U|^2U)\buap -\int \px(|U|^2)|\uap|^2-\Rea \int \px (\bU^2)\uap^2,\\
   |I| \leq&\left|2\Rea \int|U|^2U \px\buap \right|+\int |U|^2\px|\uap|^2+\left|\Rea \int (\bU^2) \px(\uap^2)\right|\\
    \leq& \|U\|_{L^\infty}\|\px(\uap)\|_{L^\infty}\|U\|_{L^2}^2+\|\uap\|_{L^\infty}\|\px(\uap)\|_{L^2}\|U\|_{L^2}^2.
\end{align*}
 For the second part, we apply Lemma \ref{f bound lemma} to control the source term $f$,
\begin{align*}
    |II|\leq \|f\|_{L^2_x}\|U\|_{L^2_x}\lesssim& Mt^{-\frac{3}{2}+\delta}\|U\|_{L^2_x}.
\end{align*}
Overall this gives us
\begin{align} \label{New U EE}
    \frac{d}{dt}\|U\|_{L^2}^2\lesssim&  \lp \|U\|_{L^\infty}\|\px(\uap)\|_{L^\infty}+ \|\uap\|_{L^\infty}\|\px(\uap)\|_{L^\infty} \rp \|U\|_{L^2}^2+\|f\|_{L^2}\|U\|_{L^2} \\
    \lesssim& \lp DMt^{-\frac{5}{4}}+M^2t^{-1}\rp\|U\|_{L^2}^2+Mt^{-\frac{3}{2}+\delta}\|U\|_{L^2}.
\end{align}

\subsubsection{\texorpdfstring{$H^1$ Energy estimate for $U$}{H1 Energy estimate for U} \label{H^1 EE U sec}}
We now turn to the $H^1$-level energy estimate. Differentiating \eqref{U eqn 1} and carrying out the standard energy estimate calculation gives
\begin{align*}
    \frac{d}{dt}\|U_x\|_{L^2}^2=\underbrace{-2\Rea \int i \px (N(U, \uap)) \cd \bU_x}_I +\underbrace{2\Rea \int i \px f \cd \bU_x}_{II}.
\end{align*}
We again split the quantity into two pieces.  For the first, we must be careful not to place more than one derivative on any copy of $U.$  

\begin{align*}
      I=&-2\Rea \int \px^2 (U|U|^2 +U^2\bu_{app}+2|U|^2u_{app}+2U|u_{app}|^2+\bU u_{app}^2)\bU_x\\
       =&\underbrace{2\Rea \int \px(U|U|^2)\bU_{xx}}_{I_a}
       +\underbrace{2\Rea \int \px (U^2\buap+2|U|^2\uap )\bU_{xx}}_{I_b}+\underbrace{2\Rea \int \px (2U|\uap|^2+\bU \uap^2)\bU_{xx}}_{I_c}.
\end{align*}
Splitting the quantity into three components, we check each one separately. The first one can be estimated directly as
\begin{align*}
   I_a=&-2\int \px|U|^2|U_x|^2+\px (U^2)  (\bU_x^2),\\
        |I_a|\lesssim& \|U_x\|_{L^\infty}\|U\|_{L^\infty}\|U_x\|_{L^2}^2.
\end{align*}
For the second one, we first expand out the derivative:
\begin{align*}
    I_b =&2\Rea \int (2UU_x \buap+ 2U_x\bU \uap+ 2U\bU_x\uap+U^2 \px (\buap)+ 2|U|^2 \px (\uap))\bU_{xx}.
\end{align*}
Observe that the complex conjugate does not affect the bound, since we ultimately take absolute values. Thus, the first three terms share the same upper bound, as do the last two. It therefore suffices to check
\begin{align*}
    2\Rea \int UU_x \buap \bU_{xx}=\int U \buap \px (|U_x|^2)=-\int (U_x \buap +U \px(\buap) ) |U_x|^2
\end{align*}
and 
\begin{align*}
    2\Rea \int U^2 \px( \buap) \bU_{xx}=-2\Rea \int (2UU_x \px( \buap) + U^2  \px^2( \buap) )\bU_{x}.
\end{align*}
We have
\begin{align*}
    |I_b|\lesssim& \lp \|U_x\|_{L^\infty} \|\uap\|_{L^\infty}+ \|U\|_{L^\infty}\|\px( \buap) \|_{L^\infty}\rp \|U_x\|_{L^2}^2\\
    +& \lp \|U\|_{L^\infty} \|U_x\|_{L^\infty} \|\px(\uap)\|_{L^2} + \|U\|_{L^\infty}^2\|\px^2(\uap)\|_{L^2} \rp\|U_x\|_{L^2}.
\end{align*}
Similarly for the third component, we only need to bound
\begin{align*}
    I_c\approx& 2\Rea \int( U_x |\uap|^2+ U \uap \px (\uap)) \bU_{xx}\\
    =& \int |\uap|^2 \px( |U_x|^2) - 2\Rea \int (U_x \uap \px(\uap)+ U \px(\uap) \px(\uap)+U \uap \px^2(\uap))\bU_x,\\
    |I_c|\lesssim& \|\uap\|_{L^\infty} \|\px (\uap)\|_{L^\infty}\|U_x\|_{L^2}^2+  (\|\px (\uap)\|_{L^\infty}^2\|U\|_{L^2}+\|U\|_{L^\infty}\|\uap\|_{L^\infty}\|\px^2 (\uap)\|_{L^2})\|U_x\|_{L^2}.
\end{align*}
For the second piece, we again use Lemma \ref{f bound lemma}.
\[|II|\leq \|\px f\|_{L^2_x}\|U_x\|_{L^2}.\]
Combining the above bounds and applying the bootstrap assumptions \eqref{bootstrap assumptions}, we obtain the energy estimate
\begin{align*}
    \frac{d}{dt}\|U_x\|_{L^2}^2\lesssim& \Big[\|U_x\|_{L^\infty}\|U\|_{L^\infty} +  \|U_x\|_{L^\infty} \|\uap\|_{L^\infty}\\
    &+ \|U\|_{L^\infty}\|\px( \buap) \|_{L^\infty}+  \|\uap\|_{L^\infty} \|\px (\uap)\|_{L^\infty} \Big]\|U_x\|_{L^2}^2 \\
    +& \Big[ \|U\|_{L^\infty} \|U_x\|_{L^\infty} \|\px(\uap)\|_{L^2} + \|U\|_{L^\infty}^2\|\px^2(\uap)\|_{L^2} + \|\px (\uap)\|_{L^\infty}^2\|U\|_{L^2}\\
    &+\|U\|_{L^\infty}\|\uap\|_{L^\infty}\|\px^2 (\uap)\|_{L^2} + \|\px f\|_{L^2_x}\Big]\|U_x\|_{L^2}\\
    \lesssim& \lp D^2M^2t^{-\frac{3}{2}}+DM^2t^{-\frac{5}{4}}+M^2t^{-1} \rp \|U_x\|_{L^2}^2\\
    &+ \lp D^2M^3t^{-\frac{3}{2}}+M^2t^{-\frac{5}{4}+\delta}+ Mt^{-\frac{5}{4}+\delta}\rp \|U_x\|_{L^2}\\
    \lesssim& M^2t^{-1}  \|U_x\|_{L^2}^2+ Mt^{-\frac{5}{4}+\delta} \|U_x\|_{L^2}.
\end{align*}
\subsubsection{\texorpdfstring{$L^2$ Energy estimates for $LU$}{L2 Energy estimates for LU} \label{L EE}}
We next derive the equation satisfied by $LU$. Applying $L$ to \eqref{U eqn 1} and using the properties of $L$ from Lemma \ref{properties of L}, we find
\begin{align}
(i\pt+\px^2)LU=L(N(U,u_{app}))-Lf.
\label{LU eqn}
\end{align}
Notice that $N=-i \px(N_1)$ where $N_1=U|U|^2+U^2\bu_{app}+2|U|^2u_{app}+2U|u_{app}|^2+\bU u_{app}^2$. Using the commutator of $L$ with $\px,$ we have
\begin{align*}
    L(N(U,u_{app}))
    =&-i\px L(N_1) +iN_1.
\end{align*}
Expanding $L(N_1)$ term-by-term, we write $L(N_1)=A+B+C+D+E,$ where
\begin{align*}
  A:=  L(U^2\bU)=&2|U|^2LU-U^2\widebar{LU},\\
  B:=  L(U \buap U)=&2U \buap LU- U^2 \widebar{L\uap},\\
  C:=  L(U\bU \uap)=&\bU \uap LU-U \uap \bLU + |U|^2 L\uap,\\
 D:=   L(U \uap \buap)=&|\uap|^2LU - U \uap \widebar{L\uap} + U \buap L \uap,\\
  E:=  L(\bU \uap^2)=& 2 \uap \bU L \uap - \uap^2 \bLU.
\end{align*}
Now we get the energy estimate
\begin{align*}
  \frac{d}{dt} \|LU\|_{L^2}=\underbrace{-2\Rea \int\px \lp L(N_1)\rp \bLU}_I \underbrace{-2\Rea \int  \px \lp N_1 \rp \bLU}_{II} + \underbrace{2\Rea i \int L f \bLU}_{III}.
\end{align*}
We now split the right-hand side into three parts. For the first part, we apply integration by parts to move derivatives and reveal the favorable structure. 
\begin{align*}
    I= 2\Rea \int \lp L(N_1)\rp \px\bLU=2\Rea \int \lp A+B +C +D +E\rp \px\bLU.
\end{align*}
We also split it into five components and estimate each piece.
\begin{align*}
   2\Rea \int A \px\bLU=& 2\Rea \int  2|U|^2LU\px\bLU-U^2\widebar{LU} \px\bLU =\int 2|U|^2 \px |LU|^2- 2\Rea U^2 \px (\bLU)^2,\\
      \left| 2\Rea \int A \px\bLU\right|\lesssim& \|U\|_{L^\infty}\|U_x\|_{L^\infty}\|LU\|_{L^2}^2.
\end{align*}
Again ignoring irrelevant complex conjugates, we see
\begin{align*}
   2\Rea \int (B+C) \px\bLU\approx& 2\int U \buap \px (|LU|^2)-\Rea U^2 \widebar{L\uap} \px\bLU \\
   =&-2\int \px (U \buap )|LU|^2-\Rea \px (U^2 \widebar{L\uap} )\bLU, \\
   \leq&\left[\|U_x\|_{L^\infty}\|\uap\|_{L^\infty}+\|U\|_{L^\infty}\|(\uap)_x\|_{L^\infty}\right]\|LU\|_{L^2}^2\\
   +&\left[\|U\|_{L^\infty}\|U_x\|_{L^\infty}\|L\uap\|_{L^2}+ \|U\|_{L^\infty}^2\|\px( L\uap)\|_{L^2}\right]\|LU\|_{L^2}
\end{align*}
and
\begin{align*}
    2\Rea \int( D+E) \px\bLU \approx& 2\Rea \int  |\uap|^2LU  \px\bLU- U \uap \widebar{L\uap} \px\bLU \\
        =& \int  |\uap|^2 \px|LU|^2- 2\Rea \int U \uap \widebar{L\uap} \px\bLU \\
        =&-\int  \px|\uap|^2 |LU|^2+ 2\Rea \int (U \uap \widebar{L\uap})_x \bLU, \\
        \left|   2\Rea \int (D+E) \px\bLU\right|\lesssim& \|\uap\|_{L^\infty}\|\px \uap\|_{L^\infty}\|LU\|_{L^2}^2
        + \Big[\|U_x\|_{L^\infty}\|\uap\|_{L^\infty}\|L\uap\|_{L^2}\\
        +&\|U\|_{L^\infty}\|\px(\uap)\|_{L^\infty}\|L\uap\|_{L^2}+\|U\|_{L^\infty}\|\px L\uap\|_{L^\infty}\|\uap\|_{L^2}\Big]\|LU\|_{L^2}.
\end{align*}
The second part is expanded to separate each factor, allowing us to estimate each piece individually. 
\begin{align*}
|II|\lesssim    \Big[ \|U\|_{L^\infty}\|U_x\|_{L^\infty}\|U\|_{L^2}+\|U\|_{L^\infty}\|U_x\|_{L^\infty} \|\uap\|_{L^2}+\|U\|_{L^\infty}^2\|\px \uap\|_{L^2} \\
    +\|U_x\|_{L^\infty}\|\uap\|_{L^\infty}\|\uap\|_{L^2}   + \|U\|_{L^\infty}\|\uap\|_{L^\infty}\|\px\uap\|_{L^2}\Big]\|LU\|_{L^2}.
\end{align*}
Lastly, the third term is treated directly using Lemma \ref{f bound lemma}.
\[|III|\lesssim\|Lf\|_{L^2}\|LU\|_{L^2}\lesssim Mt^{-\frac{5}{4}}\|LU\|_{L^2}.\]
Altogether, we have
\begin{align*}
    \frac{d}{dt}\|LU\|_{L^2}^2 \lesssim& \Big(\|U\|_{L^\infty}\|U_x\|_{L^\infty}+\|U_x\|_{L^\infty}\|\uap\|_{L^\infty}\\
    +&\|U\|_{L^\infty}\|(\uap)_x\|_{L^\infty}+\|\uap\|_{L^\infty}\|\px \uap\|_{L^\infty}
\Big)\|LU\|_{L^2}^2\\
+& \Big(\|U\|_{L^\infty}\|U_x\|_{L^\infty}\|L\uap\|_{L^2}+ \|U\|_{L^\infty}^2\|\px( L\uap)\|_{L^2}+\|U_x\|_{L^\infty}\|\uap\|_{L^\infty}\|L\uap\|_{L^2}\\
+& \|U\|_{L^\infty}\|\px(\uap)\|_{L^\infty}\|L\uap\|_{L^2}+\|U\|_{L^\infty}\|\px L\uap\|_{L^\infty}\|\uap\|_{L^2}\\
+&\|U\|_{L^\infty}\|U_x\|_{L^\infty}\|U\|_{L^2}+\|U\|_{L^\infty}\|U_x\|_{L^\infty} \|\uap\|_{L^2}+\|U\|_{L^\infty}^2\|\px \uap\|_{L^2} \\
    +&\|U_x\|_{L^\infty}\|\uap\|_{L^\infty}\|\uap\|_{L^2}   + \|U\|_{L^\infty}\|\uap\|_{L^\infty}\|\px\uap\|_{L^2}+\|Lf\|_{L^2}\Big)\|LU\|_{L^2}.
\end{align*}
Applying the bootstraps \eqref{bootstrap assumptions}, the pointwise bounds \eqref{U L^infty}, \eqref{U_x L^infty}, and Lemma \ref{\uap bounds lemma}, we get
\begin{align*}
        \frac{d}{dt}\|LU\|_{L^2}^2\lesssim& \lp D^2M^2t^{-\frac{3}{2}}+DM^2 t^{-\frac{5}{4}}+M^2t^{-1}\rp \|LU\|_{L^2}^2\\
        +& \Big( D^2M^2t^{-\frac{3}{2}}\cd M(1+M^2|\log t|)+D^2M^2t^{-\frac{3}{2}}\cd Mt^{-\frac{1}{2}-\frac{\delta}{2}}+2DM^3t^{-\frac{5}{4}}\\
        +&2DM^5t^{-\frac{5}{4}}|\log t|+DM^3 t^{-\frac{3}{4}-1-\frac{\delta}{2}}+DMt^{-\frac{3}{4}}Mt^{-\frac{1}{2}}\cd M+Mt^{-\frac{5}{4}}
\Big) \|LU\|_{L^2}\\
\lesssim& \lp D^2M^2t^{-1}\rp \|LU\|_{L^2}^2  + \Big( (DM^5+M)t^{-\frac{5}{4}+\delta}
\Big) \|LU\|_{L^2}.
\end{align*}

\subsubsection{\texorpdfstring{$H^1$ energy estimate for $LU$}{H1 energy estimate for LU}}
Lastly, we need to prove the $H^1$-level energy estimate for $LU.$ The following lemma will be key, as it provides a way to control terms with second derivatives, by using $L$ to transfer a derivative onto a different factor.
\begin{lemma}\label{u_xx to L lemma}
    For the vector field $L$ defined in \eqref{def:L}, and any two functions $u,v$ we have 
    \begin{align}\label{u_xx to L eqn}
        u_{xx}\bv = \frac{1}{2ti}Lu_x\bv-\frac{1}{2ti}\widebar{Lv}u_x-\bv_x u_x.
    \end{align}
\end{lemma}
\begin{proof}
From applying $L$ to $v$ and $u_x,$ we have
\begin{align*}
Lv=&xv+2tiv_x, \,\,\,
    Lu_x=xu_x+2tiu_{xx},\\
     \frac{x}{2ti}\bv=&\frac{1}{2ti}\widebar{Lv}+\bv_x, \,\,\,
    u_{xx}=\frac{1}{2ti}Lu_x-\frac{x}{2ti}u_x.
\end{align*}
Then, plugging each of these in yields
\begin{align*}
    u_{xx}\bv=&\frac{1}{2ti}Lu_x\bv-\frac{x}{2ti}\bv u_x\\
        =&\frac{1}{2ti}Lu_x\bv-\lp\frac{1}{2ti}\widebar{Lv}+\bv_x\rp u_x\\
       =& \frac{1}{2ti}Lu_x\bv-\frac{1}{2ti}\widebar{Lv}u_x-\bv_x u_x.
\end{align*}
\end{proof}
Note that we also have $ u_{x}\bv = \frac{1}{2ti}Lu\bv-\frac{1}{2ti}\widebar{Lv}u-\bv_x u,$ though in  this paper we only make use of the second derivative version.

Now we compute the $H^1$-level energy estimate for $LU$, starting by differentiating \eqref{LU eqn}:
\begin{align}
(i\pt+\px^2)\px LU=-i\px^2 L(N_1) +i\px N_1-\px Lf.
\end{align}
This lead to the energy identity
\begin{align*}
  \frac{d}{dt} \|\px LU\|_{L^2}
  =&\underbrace{-2\Rea \int \lp \px^2 L(N_1)\rp \px\bLU}_I \underbrace{-2\Rea \int  \lp \px N_1 \rp \px \bLU}_{II} -\underbrace{2\Rea i \int \px L f \px\bLU}_{III}.
\end{align*}
Once again, we split the quantity into three parts, using the same notation as in the $L^2$ energy estimate of $L(N_1)=A+B+C+D+E$. We have
\begin{align*}
   \int \px A \px^2 \bLU  
   \approx& \int \px(|U|^2)LU\px^2 \bLU+|U|^2\px LU\px^2 \bLU-\px (U^2)\widebar{LU}\px^2 \bLU- U^2\px\widebar{LU}\px^2 \bLU\\
        =&\int\underbrace{ -\px^2(|U|^2)LU\px \bLU+\px^2 (U^2)\widebar{LU}\px \bLU}_{A_1}
     -\underbrace{2\px|U|^2 (|\px LU|^2)+2\px U^2(\px\widebar{LU})^2}_{A_2}.
\end{align*}
The second piece can be bounded directly
\begin{align*}
    \left| \int A_2\right|\lesssim \|U\|_{L^\infty}\|U_x\|_{L^\infty}\|\px LU\|_{L^2}^2.
\end{align*}
For $A_1,$ we use Lemma \ref{u_xx to L lemma} to transfer one derivative, followed by the commutator $[\px, L]=1.$
\begin{align*}
    2 \Rea \int A_1\approx& -2 \Rea \int 2\Rea (U_{xx}\bU+ |U_x|^2)LU\px \bLU\\
    =&-2\Rea \int 2 \Rea \lp \frac{1}{2ti}LU_x\bU-\frac{1}{2ti}\widebar{LU}U_x-\bU_x U_x + |U_x|^2\rp LU\px \bLU\\
      \lesssim&\frac{1}{t}\left| \int   \lp (\px LU-U)\bU-\widebar{LU}U_x\rp LU\px \bLU\right|.
      \end{align*}
      It then becomes necessary to control $LU$ in $L^\infty$.  This can be handled using the Gagliardo-Nirenberg $\|LU\|_{L^\infty}\leq \|LU\|_{H^1}$, leading to
      \begin{align*}
    \lesssim& \frac{1}{t}\lp \|U\|_{L^\infty}\|LU\|_{L^\infty}\|\px LU\|_{L^2}^2+ \lp \|U\|_{L^\infty}^2\|LU\|_{L^2}+\|U_x\|_{L^\infty}\|LU\|_{L^\infty}\|LU\|_{L^2} \rp \|\px LU\|_{L^2}\rp\\
    \lesssim& D^2M^2t^{-2} \|\px LU\|_{L^2}^2  +D^3M^3t^{-\frac{9}{4}+2\delta} \|\px LU\|_{L^2}.
\end{align*}
For $B$ and $C,$ disregarding terms that differ by a complex conjugate, we have
\begin{align*}
     &2\Rea \int \px (B+C) \px^2 \bLU \\
     \approx &\Rea \int \px (U \buap LU+|U|^2 L\uap ) \px^2 \bLU  \\
        = &\Rea \int (U_x \buap LU+U (\buap)_x LU+U \buap\px LU+\px|U|^2 L\uap+|U|^2 \px L\uap ) \px^2 \bLU  \\
= &-\Rea \int (U_{xx} \buap LU+2U_x \px\buap LU+2U_x \buap \px LU+U (\buap)_{xx} LU+U \buap\px^2 LU\\
&+2U (\buap)_x \px LU
+2\Rea(U_{xx}\bU +|U_x|^2) L\uap+\px|U|^2 \px L\uap+|U|^2 \px^2 L\uap ) \px \bLU  \\
=&B_1+B_2,
\end{align*}
where 
\begin{align*}
    B_1=&-\Rea \int ( 2U_x \px\buap LU+2U_x \buap \px LU +2U (\buap)_x \px LU+U \buap\px^2 LU\\
    &\hspace{1.1cm}+\px|U|^2 \px L\uap+|U|^2 \px^2 L\uap )\px \bLU,\\
    B_2=&-\Rea \int (U_{xx} \buap LU +U (\buap)_{xx} LU+2\Rea(U_{xx}\bU +|U_x|^2) L\uap ) \px \bLU.
\end{align*}
$B_1$ can be estimated directly, whereas $B_2$ relies on Lemma \ref{u_xx to L lemma}.
\begin{align*}
 |B_1|\lesssim&\Big( \|U_x\|_{L^\infty}\|\px\uap\|_{L^\infty} \|LU\|_{L^2} + \|\px L\uap\|_{L^\infty}\|U_x\|_{L^\infty} \| U\|_{L^2}+ \|U\|_{L^\infty}^2 \| \px^2L\uap\|_{L^2}
                  \Big)\|\px LU\|_{L^2}\\
   +&  \Big(  \|U_x\|_{L^\infty}\|\uap\|_{L^\infty} +  \|U\|_{L^\infty} \|\px\uap\|_{L^\infty}   \Big)\|\px LU\|_{L^2}^2\\
   \lesssim& DM^2t^{-\frac{5}{4}+\delta}\|\px LU\|_{L^2}^2+ D^2M^3t^{-\frac{3}{2}+2\delta }\|\px LU\|_{L^2}.
\end{align*}
Applying Lemma \ref{u_xx to L lemma} to each second derivative,
\begin{align*}
    B_2
    =&-\Rea \int \Bigg( \lp \frac{1}{2ti}LU_x\buap-\frac{1}{2ti}\widebar{L\uap}U_x-\px\buap U_x \rp LU\\
    +&2\Rea\lp\frac{1}{2ti}LU_x\bU-\frac{1}{2ti}\widebar{LU}U_x\rp L\uap\Bigg) \px \bLU\\
    &+\lp \frac{1}{2ti}L (\uap)_x\bU-\frac{1}{2ti}\widebar{LU}\px \uap-\bU_x \px\uap\rp \widebar{LU} \px LU\\
    =&-\Rea \int \Bigg( \lp \frac{1}{2ti}(\px LU-U)\buap-\frac{1}{2ti}\widebar{L\uap}U_x-\px\buap U_x \rp LU\\
    +&2\Rea\lp\frac{1}{2ti}(\px LU-U)\bU-\frac{1}{2ti}\widebar{LU}U_x\rp L\uap\Bigg) \px \bLU\\
    &+\lp \frac{1}{2ti}(\px L\uap-\uap)\bU-\frac{1}{2ti}\widebar{LU}\px \uap-\bU_x \px\uap\rp \widebar{LU} \px LU.\\
 |B_2|\lesssim&t^{-1} \Big( \|U\|_{L^\infty}\|\uap\|_{L^\infty}\|LU\|_{L^2}+\|U_x\|_{L^\infty}\|L\uap\|_{L^\infty}\|LU\|_{L^2}+ \|U\|_{L^\infty}^2 \|L\uap\|_{L^2}\\
 +& \|U\|_{L^\infty}\|\px L\uap \|_{L^\infty}\|LU\|_{L^2}+\|\px \uap\|_{L^\infty}\|LU\|_{L^\infty}  \|LU\|_{L^2}   \Big)   \|\px LU\|_{L^2}\\
&+\lp\|U_x\|_{L^\infty}\|\px\uap\|_{L^\infty}\|LU\|_{L^2} \rp   \|\px LU\|_{L^2}\\
&+t^{-1}\lp\|LU\|_{L^\infty}\|\uap\|_{L^\infty} +  \|U\|_{L^\infty} \|L\uap\|_{L^\infty}\rp \|\px LU\|_{L^2}^2\\
\lesssim& DM^2t^{-\frac{7}{4}+\delta}\|\px LU\|_{L^2}^2 + D^2M^3t^{-\frac{3}{2}+2\delta}\|\px LU\|_{L^2}.
\end{align*}
For the last components of the first term $I$, we use a similar strategy, breaking it up and separating the pieces that need Lemma \ref{u_xx to L lemma}.
\begin{align*}
     -2\Rea \int (D+E) \px^3 \bLU  \approx&-2\Rea \int \px^2(   |\uap|^2LU + U \buap L \uap  ) \px \bLU=: D_1+D_2.
     \end{align*}
     \begin{align*}
     D_1\approx& \Rea \int \lp  \px^2 \uap \buap LU+ |\px \uap|^2 LU \rp \px \bLU\\
     &+ |\uap|^2 \px^2 LU \px \bLU+ \px \uap \buap \px LU  \px \bLU\\
     =&\Rea \int  \lp \frac{1}{2ti}L(\uap)_x\buap-\frac{1}{2ti}\widebar{L\uap}\px \uap \rp LU  \px \bLU\\
     &+(- \px |\uap|^2 + \px \uap \buap) |\px LU|^2.\\
     D_2\approx& \int \Big( U_{xx}\buap L\uap+ U_x \px \buap L\uap +U_x \buap \px L\uap\\
     +& U \px \buap \px L\uap +  U \px^2 \buap L\uap+ U \buap \px^2 L \uap   \Big) \px \bLU  \\
     =&   \int \Big( \lp \frac{1}{2ti}LU_x\buap-\frac{1}{2ti}\widebar{L\uap}U_x-\px\buap U_x \rp  L\uap+ U_x \px \buap L\uap \\
     +&U_x \buap \px L\uap  + U \px \buap \px L\uap +  U \px^2 \buap L\uap+ U \buap \px^2 L \uap   \Big) \px \bLU. \\
     |D_1|\lesssim& t^{-1}\lp\|\px L\uap\|_{L^\infty}\|\uap\|_{L^\infty}+ \|L\uap\|_{L^\infty}\|\px\uap\|_{L^\infty} 
 \rp\| LU\|_{L^2} \|\px LU\|_{L^2}\\
 +& \|\px\uap\|_{L^\infty}\|\uap\|_{L^\infty} \|\px LU\|_{L^2}^2.\\
 |D_2|\lesssim& t^{-1} \lp \|\uap\|_{L^\infty}\|L\uap\|_{L^\infty} \|\px LU\|_{L^2}^2+\|L\uap\|_{L^\infty}^2\|U_x\|_{L^2}\|\px LU\|_{L^2}   \rp  \\
 +& \Big(  \|U_x\|_{L^\infty} \|\uap\|_{L^\infty}\|\px L\uap\|_{L^2}+  \|U\|_{L^\infty} \|\px\uap\|_{L^\infty}\|\px L\uap\|_{L^2}\\
 +&\|U\|_{L^\infty} \|\px^2\uap\|_{L^2}\| L\uap\|_{L^\infty}+ \|U\|_{L^\infty} \|\uap\|_{L^\infty}\|\px^2 L\uap\|_{L^2}   \Big) \|\px LU\|_{L^2}.\\
 |D_1+D_2|\lesssim& M^2t^{-1} \|\px LU\|_{L^2}^2+ DM^3 t^{-\frac{5}{4}}   \|\px LU\|_{L^2}.
\end{align*}
So far, we have a bound for the first part,
\begin{align*}
|I|\lesssim& M^2t^{-1} \|\px LU\|_{L^2}^2+ D^2M^3 t^{-\frac{5}{4}}   \|\px LU\|_{L^2}.
\end{align*}
For the second part, we expand out the derivative and estimate each piece.
\begin{align*}
|II|\lesssim&    \Big[ \|U\|_{L^\infty}\|U_x\|_{L^\infty}\|U\|_{L^2}+\|U\|_{L^\infty}\|U_x\|_{L^\infty} \|\uap\|_{L^2}+\|U\|_{L^\infty}^2\|\px \uap\|_{L^2} \\
    & \hspace{0.2cm}+\|U_x\|_{L^\infty}\|\uap\|_{L^\infty}\|\uap\|_{L^2}   + \|U\|_{L^\infty}\|\uap\|_{L^\infty}\|\px\uap\|_{L^2}\Big]\|\px LU\|_{L^2}\\
    \lesssim&D^3M^3t^{-2+2\delta}+ DM^3t^{-\frac{5}{4}+\delta}\|\px LU\|_{L^2}.
\end{align*}
We get the bound for the third part directly from Lemma \ref{f bound lemma},
\begin{align*}
|III|\lesssim& M t^{-\frac{5}{4}+\delta}   \|\px LU\|_{L^2}.
\end{align*}
Therefore we obtain
\begin{align*}
    \frac{d}{dt} \|\px LU\|_{L^2}^2\lesssim& M^2t^{-1} \|\px LU\|_{L^2}^2+ (DM^3+M)t^{-\frac{5}{4}+\delta}\|\px LU\|_{L^2}.
\end{align*}

\subsection{Closing the bootstrap argument\label{bootstrap sec}}

In order to close the bootstraps, we need to use the Nonlinear version of Gr\"onwall's inequality, Lemma \ref{NL Gronwall}.
This is applied to each energy estimate, using the final condition $U(\infty)=0,$ where $f(t)$ below refers to the function $f$ in Lemma \ref{NL Gronwall}.
\begin{itemize}
    \item $f(t)=\|U\|_{L^2}$:
    Recall from Lemma \ref{U EE lemma} we had
\begin{align*}
      \frac{d}{dt}\|U\|_{L^2}^2
    \lesssim& \lp DMt^{-\frac{5}{4}}+M^2t^{-1}\rp\|U\|_{L^2}^2+Mt^{-\frac{3}{2}+\delta}\|U\|_{L^2}\\
    \lesssim&\lp DM^2t^{-1}\rp\|U\|_{L^2}^2+Mt^{-\frac{3}{2}+\delta}\|U\|_{L^2}.
\end{align*}
Applying Lemma \ref{NL Gronwall} gives
\begin{align*}
  \|U\|_{L^2}\leq& \frac{M}{1-2\delta-DM^2} t^{-\frac{1}{2}+\delta}.
\end{align*}
This closes the bootstrap argument, since  $\frac{M}{1-2\delta-DM^2} <DM$ for $M, \delta$ small enough. 

\item $f(t)=\|U_x\|_{L^2}$: The $H^1$ energy estimate obtained was
\begin{align*}
    \frac{d}{dt}\|U_x\|_{L^2}^2
    \lesssim& \lp D^2M^2t^{-1} \rp \|U_x\|_{L^2}^2+ \lp (D^2M^3+M)t^{-\frac{5}{4}+\delta}   \rp \|U_x\|_{L^2}.
\end{align*}
By applying the lemma, we get
\begin{align*}
    \|U\|_{L^2}\leq& \frac{D^2M^3+M}{\frac{1}{2}-2\delta-D^2M^2} t^{-\frac{1}{4}+\delta}.
\end{align*}

\item $f(t)=\|LU\|_{L^2}$:
Similarly, the energy estimate
\begin{align*}
    \frac{d}{dt}\|LU\|_{L^2}^2\lesssim&\lp D^2M^2t^{-1}\rp \|LU\|_{L^2}^2  + \Big( (DM^5+M)t^{-\frac{5}{4}+\delta}
\Big) \|LU\|_{L^2},
\end{align*}
becomes
\begin{align*}
    \|LU\|_{L^2}\leq \frac{DM^5+M}{\frac{1}{2}-2\delta-D^2M^2}t^{-\frac{1}{4}+\delta}.
\end{align*}

\item $f(t)=\|\px LU\|_{L^2}$:
Lastly,
\begin{align*}
    \frac{d}{dt} \|\px LU\|_{L^2}^2\lesssim& M^2t^{-1} \|\px LU\|_{L^2}^2+ (DM^3+M)t^{-\frac{5}{4}+\delta}\|\px LU\|_{L^2}
\end{align*}
leads to
\begin{align*}
    \|\px LU\|_{L^2}\leq \frac{DM^3+M}{\frac{1}{2}-2\delta-M^2}t^{-\frac{1}{4}+\delta}.
\end{align*}

\end{itemize}

For $M^2\ll \delta\ll 1$, each coefficient is strictly smaller than $DM,$ so each bootstrap closes, thus completing the energy bounds.  

\subsection{Constructing solutions \label{Constructing sols sec}}
At this stage, we have the energy estimates for the function $U$, assuming that $U$ exists, but these do no immediately guarantee the existence of the solution $U.$ To construct a solution, we proceed via a limiting argument.

Let $u_T$ be the solution to \eqref{dnls} with final time data $u(T)=\uap(T)$, solved backwards in time from $t=T$ to $t=0.$  The existence of $u_T$ is guaranteed by the standard well-posedness theory for \eqref{dnls}.  Now define $U_T=u_T-\uap.$  Then  $U_T$ solves

\begin{align*}
    (i\pt+\px^2)U_T=N(U_T,u_{app})-f, \,\, U_T(T,x)=0.
\end{align*}

We want to show that $U_T\to U$ in $L^2.$

\subsubsection{\texorpdfstring{Energy estimate for $U_T$}{Energy estimate for UT}}

We can see that $U_T$ will satisfy the same energy estimate as $U,$ which for $w(t)=\|U_T\|_{L^2}$ leads to 
\begin{align*}
    w(t)\leq& w(t_0) \cdot \lp \frac{t}{t_0}\rp^{M^2}+ \left|\frac{M}{-1+2\delta+2M^2}\lp t^{-\frac{1}{2}+\delta}-t_0^{-\frac{1}{2}+\delta+M^2}t^{-M^2} \rp\right|.
\end{align*}
  Then, since $U_T(T)=0,$ setting $t_0=T$ yields the bound
\begin{align*}
    \|U_T\|_{L^2}\lesssim&M\left| t^{-\frac{1}{2}+\delta}-T^{-\frac{1}{2}+\delta} \lp \frac{T}{t}\rp^{\frac{M^2}{2}}\right|.
\end{align*}
We proceed by showing that $\{U_T\}$ is a Cauchy sequence. 
Let $U_{T_1}, U_{T_2}$ be solutions to
\begin{align} \label{U_T_1 eqn}
(i\pt+\px^2)U_{T_1}=N(U_{T_1},u_{app})-f, \,\,\, U_{T_1}(T_1,x)=0,\\
    (i\pt+\px^2)U_{T_2}=N(U_{T_2},u_{app})-f, \,\,\, U_{T_2}(T_2,x)=0, \label{U_T_2 eqn}
\end{align}
respectively. 
We need to show that 
$\|U_{T_1}-U_{T_2}\|_{L^2}\to 0$ as $T_1, T_2\to \infty$. Without loss of generality, let $T_1>T_2$.
There are two cases to consider, depending the relative size of $t$ and $T_2.$

\textbf{Case 1: $T_2<t$.}
By the triangle inequality,
\begin{align*}
    \|U_{T_1}-U_{T_2}\|_{L^2}\leq \|U_{T_1}\|_{L^2}+ \|U_{T_2}\|_{L^2}.
\end{align*}
From the energy estimate, and the fact that $T_2<t$, we have the bound 
\begin{align*}
    \|U_{T_1}\|_{L^2}\leq& Mt^{-\frac{1}{2}+\delta}+MT_1^{-\frac{1}{2}+\delta}\lp \frac{T_1}{t}\rp^{M^2}\\
    \leq& MT_2^{-\frac{1}{2}+\delta}+MT_1^{-\frac{1}{2}+\delta+M^2}T_2^{-M^2}.
\end{align*}
Similarly for $U_{T_2},$
\begin{align*}
    \|U_{T_2}\|_{L^2}\leq& Mt^{-\frac{1}{2}+\delta}-MT_2^{-\frac{1}{2}+\delta}\lp \frac{T_2}{t}\rp^{M^2}\\
    \leq& MT_2^{-\frac{1}{2}+\delta}-MT_2^{-\frac{1}{2}+\delta}.
\end{align*}
Therefore $\|U_{T_1}\|_{L^2},\|U_{T_2}\|_{L^2}\to 0$ as $T_1,T_2\to \infty.$

\textbf{Case 2: $t<T_2<T_1$.} First, let us denote the difference by $V:=U_{T_2}-U_{T_1}$.  Our goal is to determine the equation satisfied by $V.$ Subtracting \eqref{U_T_1 eqn} from \eqref{U_T_2 eqn}, we obtain
\begin{align*}
(i\pt+\px^2)V=N(U_{T_2},u_{app})-N(U_{T_1},u_{app}).
\end{align*}
This difference can be calculated as
\begin{align*}
    N(U_{T_2},u_{app})-N(U_{T_1},u_{app})
    =&i\px(N_1+N_2+N_3+N_4+N_5),
\end{align*}
where
\begin{align*}
    N_1=&U_{T_1}|U_{T_1}|^2-U_{T_2}|U_{T_2}|^2=-(|V|^2V+2|U_{T_1}|^2V+U_{T_1}^2\Bar{V}+V^2 \bU_{T_1}+2U_{T_1}|V|^2),\\
    N_2=&U_{T_1}^2\bu_{app}-U_{T_2}^2\bu_{app}=-\buap (2U_{T_1}V+V^2),\\
    N_3=&2|U_{T_1}|^2u_{app}-2|U_{T_2}|^2u_{app}=-2\uap (V\bU_{T_1}+ \bar{V}U_{T_1}+|V|^2),\\
    N_4=&2U_{T_1}|u_{app}|^2-2U_{T_2}|u_{app}|^2=-2|u_{app}|^2V,\\
    N_5=&\bU_{T_1} u_{app}^2-\bU_{T_2} u_{app}^2=-u_{app}^2\Bar{V}.
\end{align*}
The energy estimate for $V$ is thus
\begin{align*}
    \frac{d}{dt}\|V\|_{L^2}^2=&2\Rea \int \px (N_1+...+N_5) \bar{V}\\
    =&-2\Rea \int  (N_1+...+N_5) \px\bar{V}.
\end{align*}
Note that $N_1$ is exactly the nonlinearity $N,$ except with $V$ in place of $U$ and $U_{T_1}$ in place of $\uap.$ Consequently, the same structure as in the energy estimate for $U,$ holds here.  In particular, the $N_1$ term can be bounded by 
\begin{align*}
  \left|\Rea \int  N_1\px\bar{V}\right|\lesssim&  \lp\|V\|_{L^\infty}\|\px(U_{T_1})\|_{L^\infty}+ \|U_{T_1}\|_{L^\infty}\|\px(U_{T_1})\|_{L^\infty}\rp \|V\|_{L^2}^2.
\end{align*}
$N_2$ and $N_3$ have the same base structure when ignoring complex conjugates. It follows that
\begin{align*}
\Rea \int ( N_2+N_3)\px\bar{V}\approx&   2\Rea \int \buap (2U_{T_1}V+V^2) \bar{V}_x=2 \int \buap U_{T_1}\px |V|^2 + 2\Rea \int \buap V^2 \bar{V}_x\\
\lesssim&\lp \|\px \uap\|_{L^\infty}\|U_{T_1}\|_{L^\infty}+ \|\uap\|_{L^\infty}\|\px(U_{T_1})\|_{L^\infty}\rp \|V\|_{L^2}^2\\
&+\|\px\uap\|_{L^\infty}\|V\|_{L^\infty}\|V\|_{L^2}^2.
\end{align*}
The final two pieces can be estimated directly. 
\begin{align*}
     \left|\Rea \int  (N_4+N_5)\px\bar{V}\right|\lesssim \|\px\uap\|_{L^\infty}\|\uap\|_{L^\infty}\|V\|_{L^2}^2.
\end{align*}
Overall, this gives
\begin{align*}
     \frac{d}{dt}\|V\|_{L^2}^2\lesssim&  \Big( \|V\|_{L^\infty}\|\px(U_{T_1})\|_{L^\infty}+\|\px(U_{T_1})\|_{L^\infty}\|U_{T_1}\|_{L^\infty} +   \|\px \uap\|_{L^\infty}\|U_{T_1}\|_{L^\infty} \\
     &+ \|\uap\|_{L^\infty}\|\px(U_{T_1})\|_{L^\infty}+\|\px\uap\|_{L^\infty}\|V\|_{L^\infty}+ \|\px\uap\|_{L^\infty}\|\uap\|_{L^\infty}     \Big) \|V\|_{L^2}^2\\
     \lesssim& M^2t^{-1} \|V\|_{L^2}^2.
\end{align*}
An application of Gr\"onwall’s inequality gives, for $f(t)=\|V\|_{L^2}^2,$
\begin{align*}
    f(t)\leq& f(t_0) \exp \int_{t_0}^t M^2 s^{-1}\, ds\\
    f(t)\leq&f(t_0) \cd \lp \frac{t}{t_0}\rp^{M^2},
\end{align*}
hence
\begin{align*}
     \|U_{T_2}-U_{T_1}\|_{L^2}\leq  \|(U_{T_2}-U_{T_1})(t_0)\|_{L^2}\cd \lp \frac{t}{t_0}\rp^{\frac{M^2}{2}}.
\end{align*}
Using $t_0=T_2,$ the fact that $t<T_2,$ and the energy estimate for $U_{T_1},$ we have
\begin{align*}
     \|U_{T_2}-U_{T_1}\|_{L^2}\leq&  \|(U_{T_2}-U_{T_1})(T_2)\|_{L^2}\cd \lp \frac{t}{T_2}\rp^{\frac{M^2}{2}}\\
     \leq&  \|U_{T_1}(T_2)\|_{L^2}\\
    \leq&  M\left| T_2^{-\frac{1}{2}+\delta}-T_1^{-\frac{1}{2}+\delta+M^2}T_2^{-M^2}\right|\to 0,
\end{align*}
as $T_1,T_2\to \infty.$  This gives the existence and therefore completes the proof of the asymptotic completeness.

\appendix

\section{Proof of Lemma \ref{\uap bounds lemma} \label{proof of uap lemma appendix}}
Recall the definition of the ansatz
\begin{align*}
        u_{app}:=t^{-\frac{1}{2}}e^{i\frac{x^2}{4t}}\W\left(\frac{x}{t}\right)e^{-\frac{ix}{2t}|\W\left(\frac{x}{t}\right)|^2\log t}.
\end{align*}
Throughout this appendix, we adopt some shorthand to simplify the notation. We use $'$ to denote derivatives with respect to the variable $v,$ and set $h:=i\frac{x^2}{4t}-\frac{ix}{2t}|\W\left(\frac{x}{t}\right)|^2\log t$, so that $h$ is the phase appearing in $\uap$. It is worth noting that derivatives with respect to $x$ and $v$ are related by $\px=\frac{1}{t}\pv,$ and their $L^2$ norms satisfy $\|\cdot \|_{L^2_x}=t^{\frac{1}{2}}\|\cdot \|_{L^2_v}$.

\begin{proof}[Proof of Lemma \ref{\uap bounds lemma}]
The first bound follows from the definition of $\uap.$  For the second one, we calculate the derivative to see
\begin{align*}    
(\uap)_x =&t^{-\frac{1}{2}}e^{h}\left[\frac{1}{t}\W'(v)+\frac{i}{2}v\W(v)
    -\frac{i}{2t}|\W|^2\W(v)\log t-
  it^{-1}v\W(v)\cdot \Rea(\W' \bW)\log t\right].
\end{align*}
Then using Lemma \ref{Wbounds_lemma}, we have
\begin{align*}
    \|(\uap)_x\|_{L^\infty}\lesssim Mt^{-\frac{1}{2}}
\end{align*}
and
\begin{align*}
     \|(\uap)_x\|_{L^2_x}\lesssim t^{-\frac{1}{2}}\|vW\|_{L^2_x}= \|vW\|_{L^2_v}\leq M.
\end{align*}
For the second derivative,
\begin{align*} (\uap)_{xx}
  =&t^{-\frac{3}{2}}e^{h}\Big[\frac{1}{t}\W''(v)+\frac{i}{2}\W(v)+\frac{i}{2}v\W'(v)
    -\frac{i}{2t}\log t(2\W \W'\bW+\W^2\bW')\\
    -&
it^{-1}\log t(\W(v)\cdot \Rea(\W' \bW)+v\W'(v)\cdot \Rea(\W' \bW) +v\W(v)\cdot \Rea(\W'' \bW+\W' \bW')    \Big],\\
\|(\uap)_{xx}\|_{L^2_x}\lesssim& t^{-1}\Big[t^{-1} \|\W''\|_{L^2_v} +\|\W\|_{L^2_v} +\|v\W'\|_{L^2_v}+t^{-1}\log t \|\W\|_{L^\infty}^2 \|\W'\|_{L^2_v}\\
+&t^{-1}\log t \|\W\|_{L^\infty}^2\|\W'\|_{L^2_v}+\|v\W'\|_{L^\infty}\|\W'\|_{L^\infty}\|\W\|_{L^2_v}+\|v\W\|_{L^\infty}\|\W\|_{L^\infty}\|\W''\|_{L^2_v} \Big]\\
\lesssim& Mt^{-\frac{3}{4}-\delta}.
\end{align*}
For the $L\uap$ bounds, first calculate
\begin{align*}
    L\uap=& t^{-\frac{1}{2}}e^h \left[ 2i\W'(v)
    +|\W|^2\W(v)\log t+
 2v\W(v)\cdot \Rea(\W' \bW)\log t\right].
\end{align*}
Then we take the norms and apply Lemma \ref{Wbounds_lemma}.
\begin{align*}
\|L\uap\|_{L^\infty}\leq&t^{-\frac{1}{2}}\left[\|\W'(v) \|_{L^\infty} +|\log t| \|\W\|_{L^\infty}^3 +|\log t| \| v\W\|_{L^\infty}\| W\|_{L^\infty} \| W'\|_{L^\infty}\right] \\
    \lesssim& Mt^{-\frac{1}{2}}(1+M^2|\log t|), \\
\|L\uap\|_{L^2_x}\leq& \left[ \| W'\|_{L^2_v}+\log t \| W\|_{L^\infty}^2\| W\|_{L^2}+\log t \| vW\|_{L^\infty}\| W\|_{L^\infty}\| W'\|_{L^2}\right]\\
\lesssim& M(1+M^2\log t).
\end{align*}
Now, we need both the $L^\infty$ and $L^2$ norms for $\px L\uap.$ Start by differentiating the previous quantity for $L\uap.$
\begin{align*}
\px L\uap =& \underbrace{(\px h) L\uap}_{I}+ \underbrace{t^{-\frac{1}{2}}e^h\px\left[ 2i\W'(v)
    +|\W|^2\W(v)\log t+
 2v\W(v)\cdot \Rea(\W' \bW)\log t\right]}_{II}.
\end{align*}
We break this up into two parts. For the first, we use
\begin{align*}
     \px h=& \frac{ix}{2t}-\frac{i}{2t}\log t (|\W|^2+x\px|\W|^2),
\end{align*}
in order to write the first term as
\begin{align*}
 I =&t^{-\frac{1}{2}}e^h \left[ -v\W'(v)
    +\frac{iv}{2}|\W|^2\W(v)\log t+
 iv^2\W(v)\cdot \Rea(\W' \bW)\log t\right]\\
 -& \frac{i}{2}t^{-\frac{1}{2}}e^h\log t \left[ 2i\frac{1}{t}|\W|^2\W'(v)
    +\frac{1}{t}\log t |\W|^2|\W|^2\W(v)+
 2\frac{1}{t}\log t |\W|^2v\W(v)\cdot \Rea(\W' \bW)\right]\\
 -&it^{-\frac{1}{2}}e^h \log t\Big[ 2i\W'(v)\frac{v}{t}\Rea(\W'\bW)\\
    &+\frac{v}{t}\Rea(\W'\bW)|\W|^2\W(v)\log t+
 2v\W(v)\cdot \Rea(\W' \bW)\frac{v}{t}\Rea(\W'\bW)\log t\Big].
\end{align*}
By expanding the derivative, second term is
\begin{align*}
  II =&t^{-\frac{3}{2}}e^h\p_v\left[ 2i\W'(v)
    +\W^2\bW\log t+
 2v\W(v)\cdot \Rea(\W' \bW)\log t\right]\\
 =&t^{-\frac{3}{2}}e^h\Big[ 2i\W''(v)
    +(2\W\W'\bW+\W^2\bW')\log t\\
    +&
 (2\W+2v\W')\cdot \Rea(\W' \bW)\log t+ 2\log t v\W \Rea (\W''\bW+|\W'|^2)\Big].
\end{align*}
Next, take the $L^\infty$ norm of each term and apply Lemma \ref{Wbounds_lemma}.
\begin{align*}
  \|I\|_{L^\infty} \lesssim &t^{-\frac{1}{2}}\Big[\|v\W'\|_{L^\infty}+\|v\W\|_{L^\infty}\|\W\|_{L^\infty}^2+t^{-1}\log t(\|\W\|_{L^\infty}^5+\|v\W\|_{L^\infty}^2\|\W'\|_{L^\infty})\\
  +&\frac{1}{t} \|\W'\|_{L^\infty}^2 \|v\W\|_{L^\infty}+ \frac{\log t}{t}\|v\W\|_{L^\infty} \|\W'\|_{L^\infty} \|\W\|_{L^\infty}^3 + \frac{\log t}{t}\|v\W\|_{L^\infty}^2 \|\W'\|_{L^\infty}^2 \|\W\|_{L^\infty}\Big]\\
    \lesssim&t^{-\frac{1}{2}}\Big[M+t^{-1}M^3+\frac{\log t}{t}M^5 \Big]
    \lesssim Mt^{-\frac{1}{2}},\\
     \|II\|_{L^\infty}\lesssim &t^{-\frac{3}{2}}\Big[\|\W''\|_{L^\infty}+\log t \big(\|\W\|_{L^\infty}^2 \|\W'\|_{L^\infty}  + \|\W'\|_{L^\infty}^2\|v\W\|_{L^\infty} \\
     &+   \|v\W\|_{L^\infty} (\|\W''\|_{L^\infty} \|\W\|_{L^\infty} +  \|\W'\|_{L^\infty}^2 )\big)  \Big]\\
\lesssim& Mt^{-1-\frac{\delta}{2}}|\log t|.
\end{align*}
Now, we take the $L^2$ norm and apply Lemma \ref{Wbounds_lemma}. Then,
\begin{equation}\label{px L I}
    \begin{split}
         \|I \|_{L^2_x}\lesssim&\Big[\|v\W'\|_{L^2}+\log t(\|\W\|_{L^\infty}^2\|v\W\|_{L^2}+ \|v\W\|_{L^\infty}^2\|\W'\|_{L^2})+\frac{1}{t} \|\W'\|_{L^\infty} \|v\W\|_{L^\infty}\|\W'\|_{L^2}\\
 +& \frac{\log t}{t}\|v\W\|_{L^\infty}  \|\W\|_{L^\infty}^3 \|\W'\|_{L^2}+ \frac{\log t}{t}\|v\W\|_{L^\infty}^2 \|\W'\|_{L^\infty} \|\W\|_{L^\infty}\|\W'\|_{L^2}\Big]
    \end{split}
\end{equation}
and
\begin{equation}\label{px L II}
    \begin{split}
         \|II \|_{L^2_x}\lesssim&t^{-1}\Big[\|\W''\|_{L^2}+\log t \big(\|\W\|_{L^\infty}^2 \|\W'\|_{L^2}  + \|\W'\|_{L^\infty}\|v\W\|_{L^\infty} \|\W'\|_{L^2} \\
    +& \|v\W\|_{L^\infty} \|\W\|_{L^\infty}\|\W''\|_{L^2} + \|v\W\|_{L^\infty} \|\W'\|_{L^\infty}\|\W'\|_{L^2}\big)  \Big].
    \end{split}
\end{equation}
Combining these yields
\begin{align*}
    \|\px L\uap \|_{L^2_x}\lesssim M^3 \log t +Mt^{-\frac{3}{4}-\delta}+M^3t^{-\frac{3}{4}-\delta} \log t \lesssim M^3 \log t.
\end{align*}
For the second derivative, we proceed by relying on previously established identities as much as possible.  Recall that here we only need the $L^2$ norm. As a first step, we decompose it into four separate terms.
\begin{align*}
\px^2 L\uap =& \underbrace{(\px^2 h) L\uap}_{III}+\underbrace{(\px h) \px L\uap}_{IV}\\
+&\underbrace{(\px h)t^{-\frac{1}{2}}e^h\px\left[ 2i\W'(v)
    +|\W|^2\W(v)\log t+
 2v\W(v)\cdot \Rea(\W' \bW)\log t\right]}_{V}\\
 +& \underbrace{t^{-\frac{1}{2}}e^h\px^2\left[ 2i\W'(v)
    +|\W|^2\W(v)\log t+
 2v\W(v)\cdot \Rea(\W' \bW)\log t\right]}_{VI}.
\end{align*}
For term $III,$ we start by finding the $L^\infty$ norm of $\px^2 h,$ then using the previous $L^2$ bound of $L\uap.$ We have
\begin{align*}
 \px^2 h  
    =&\frac{1}{t}\lp \frac{i}{2}-\frac{i}{2t}\log t \lp 2 \Rea (\W' \bW)+\Rea(\W'\bW)+v\Rea(\W''\bW+|\W'|^2)\rp\rp, \\
    \|\px^2 h\|_{L^\infty}\lesssim& \frac{1}{t}\lp 1+ t^{-1}\log t \lp \|\W'\|_{L^\infty} \|\W\|_{L^\infty}+\|\W''\|_{L^\infty} \|v\W\|_{L^\infty}+  \|v\W'\|_{L^\infty}^2\rp \rp \\
    \lesssim& t^{-1}.
\end{align*}
In $L^2_x, \,III$  is bounded by 
\begin{align*}
\|III\|_{L^2_x}\leq&   \|\px^2 h\|_{L^\infty} \|L\uap\|_{L^2_x}\lesssim t^{-1} M(1+M^2|\log t|).
\end{align*}
We split up $IV$ into two pieces.
\begin{align*}
  IV=  \px h \px L\uap=& \underbrace{\frac{i}{2}v \px L\uap}_{IV_a} + \underbrace{\frac{i}{2t}\log t\lp |\W|^2+v\Rea(\W'\bW) \rp\px L\uap}_{IV_b}.
\end{align*}
 In the first, $IV_a,$ we modify the previous bounds for $\px L\uap$ by attaching an additional $v$ to each term in \eqref{px L I} and \eqref{px L II}.
\begin{align*}
    \|IV_a\|_{L^2_x}\lesssim&\|v^2\W'\|_{L^2}+\log t(\|\W\|_{L^\infty}\|v\W\|_{L^\infty}\|v\W\|_{L^2}+ \|v\W\|_{L^\infty}^2\|v\W'\|_{L^2})\\
   +& t^{-1} \Big[\|\W'\|_{L^\infty} \|v\W\|_{L^\infty}\|v\W'\|_{L^2}+\|v\W''\|_{L^2}+ \log t\|v\W\|_{L^\infty}^2  \|\W\|_{L^\infty}^2 \|\W'\|_{L^2}\\
    &+\log t\big(\|v\W\|_{L^\infty}^3 \|\W'\|_{L^\infty} \|\W'\|_{L^2}+\|\W\|_{L^\infty}\|v\W\|_{L^\infty} \|\W'\|_{L^2}  + \|\W'\|_{L^\infty}\|v\W\|_{L^\infty} \|v\W'\|_{L^2}  \\
    &+\|v\W\|_{L^\infty}^2\|\W''\|_{L^2}+ \|v\W\|_{L^\infty} \|v\W'\|_{L^\infty}\|\W'\|_{L^2}\big)  \Big]\\
    \lesssim&M+M^3\log t + Mt^{-\frac{1}{4}-\frac{\delta}{2}}.
\end{align*}
Also, the second piece can be bounded by
\begin{align*}
        \|IV_b\|_{L^2_x}\leq& \lp t^{-1+\delta}\|\W\|_{L^\infty}^2+ t^{-1}\|v\W\|_{L^\infty}\|\W'\|_{L^\infty} \rp \|\px L\uap\|_{L^2_x}\\
        \leq& M^2t^{-1+\delta}\cdot M^3 \log t\leq M^5 t^{-1+2\delta}.
\end{align*}
For term $V,$ we see that it can be written as
\begin{align*}
V=& (\px h) \cd II\\
=&\underbrace{\frac{i}{2}v\cdot II}_{V_a}+ \underbrace{\lp -\frac{i}{2t}\log t |\W|^2-\log t\frac{iv}{t}\Rea(\W'\bW)\rp \cd II}_{V_b}.
\end{align*}
Then, for the first part, $V_a$ as before we use the previous bound for $II$, but add the $v$ factor.
\begin{align*}
     \|V_a\|_{L^2_x}\leq& t^{-1}\Big[\|v\W''\|_{L^2}+\log t \big(\|v\W\|_{L^\infty}\|\W\|_{L^\infty} \|\W'\|_{L^2}  + \|\W'\|_{L^\infty}\|v\W\|_{L^\infty} \|v\W'\|_{L^2} \\
    +&   \|v\W\|_{L^\infty}^2\|\W''\|_{L^2} + \|v\W\|_{L^\infty} \|\W'\|_{L^\infty}\|v\W'\|_{L^2}\big)  \Big]\\
    \lesssim& Mt^{-\frac{1}{4}-\frac{\delta}{2}}.
\end{align*}
For the second part, $V_b$, we have
\begin{align*}
     \left\|V_b
\right\|_{L^2_x} \leq& t^{-1+\delta}\lp  \|\W\|_{L^\infty}^2+ \|v\W\|_{L^\infty} \|\W'\|_{L^\infty}   \rp   \| II\|_{L^2_x}\\
\lesssim&M^3t^{-\frac{7}{4}+\delta}.
\end{align*}
Lastly, we estimate $VI$ directly to get
\begin{align*}
 VI=&t^{-\frac{5}{2}}e^h\pv^2\left[ 2i\W'(v)
    +|\W|^2\W(v)\log t+
 2v\W(v)\cdot \Rea(\W' \bW)\log t\right],\\
 \|VI\|_{L^2_x}\lesssim& t^{-2}\Big[ \|\W'''\|_{L^2}  + \log t ( \|\W\|_{L^\infty}^2\|\W''\|_{L^2} + 
  \|\W'\|_{L^\infty}^2\|\W\|_{L^2}+ \|v\W'\|_{L^\infty} \|\W'\|_{L^\infty}\|\W'\|_{L^2} \\
  +&\|v\W\|_{L^\infty} \|\W'\|_{L^\infty}\|\W''\|_{L^2}+\|v\W\|_{L^\infty} \|\W\|_{L^\infty}\|\W'''\|_{L^2}  )
 \Big]\\
 \lesssim& Mt^{-\frac{5}{4}}.
\end{align*}

Overall, we obtain
\begin{align*}
    \|\px^2 L\uap\|_{L^2_x}\lesssim M^3|\log t|.
\end{align*}
\end{proof}

\section{Proof of Lemma \ref{f bound lemma} \label{f bound proof} }

\begin{proof}[Proof of Lemma \ref{f bound lemma}]
The objective of this section is to bound the source term $f$, as defined in \eqref{f defn}, in $L^2.$ Recall the definitions
\begin{align*}
     f=(i\pt+\px^2)u_{app}+i\px(u_{app}|u_{app}|^2)
\end{align*}
and 
\begin{align*}
    u_{app}:=t^{-\frac{1}{2}}e^{i\frac{x^2}{4t}}\W\left(\frac{x}{t}\right)e^{-\frac{ix}{2t}|\W\left(\frac{x}{t}\right)|^2\log t}.
\end{align*}
We need to write out $f$ completely in terms of $\W$ to see what cancels. We have 
\begin{equation*}
    \begin{split}
       i\pt u_{app}    =&\frac{e^h}{t^{\frac{1}{2}}}\bigg[\frac{-i}{2t}\W+\frac{v^2}{4}\W -ivt^{-1}\W'(v)-\frac{v}{2t}\W|\W|^2\log t+i \pt \W\\
    &+\frac{v}{2t}\W|\W|^2 -\frac{v^2}{2t}\log t \W(|\W|^2)' +\frac{v}{2}\log t \W\pt |\W|^2\bigg],\\
\px^2(\uap)=&e^ht^{-\frac{1}{2}}\Big[\frac{i}{2t}\W-\frac{v^2}{4} \W+ivt^{-1}\pv \W +\frac{v}{2t}\log t\W |\W|^2+t^{-2}\W''\\
-&\frac{i}{t^2}\log t  \W' (|\W|^2+v\pv|\W|^2)+\frac{v^2}{2t}\log t\W(|\W|^2)'\\
    +&\frac{(\log t)^2}{4t^2} \W (|\W|^2+v\pv |\W|^2)^2-\frac{i}{2t^2}\log t \W (2 \pv |\W|^2+v \pv^2|\W|^2)\Big], \\
    i\px(u_{app}|u_{app}|^2)=&e^ht^{-\frac{1}{2}}\Big[it^{-2}\pv(|\W|^2\W)-\frac{v}{2t}|\W|^2\W\\
    +&\frac{1}{2t^2}\log t |\W|^4\W +\frac{v}{2t^2}\log t |\W|^2\W \pv |\W|^2 \Big].
\end{split}
\end{equation*}
By combining these, we get
\begin{align*}
    f=&e^ht^{-\frac{1}{2}}[f_1+f_2+f_3+f_4+f_5],
\end{align*}
where
\begin{align*}
    f_1=&i \pt \W+v\log t \W \Rea (\bW \pt \W),\\
    f_2=& \frac{(\log t)^2}{4t^2} \W (|\W|^2+v\pv |\W|^2)^2,\\
      f_3=&-\frac{i}{t^2}\log t \pv (\W |\W|^2)- \frac{iv}{2t^2}\log t \W\pv^2|\W|^2, \\
        f_4=&-\frac{iv}{t^2}\log t \pv \W \pv|\W|^2 +t^{-2}\pv^2\W,  \\
          f_5=& it^{-2}\pv(|\W|^2\W) +\frac{1}{2t^2}\log t |\W|^4\W +\frac{v}{2t^2}\log t |\W|^2\W \pv |\W|^2.
\end{align*}
Observe that because of the relation between the $L^2_x$ norm and the $L^2_v$ norm, we have   $\|f\|_{L^2_x}=\|f_1+f_2+f_3+f_4+f_5\|_{L^2_v}.$  We will consider each term $f_i$ separately, and apply Lemma \ref{Wbounds_lemma} to get the $L^2$ bound on $f.$ First,
\begin{align*}
    \|f_1\|_{L^2_v}\lesssim& t^{-1}[\|W_{t^{\frac{1}{2}}}\|_{L^2_v}+\log t\|v\W\|_{L^\infty}\|\W\|_{L^\infty}\|W_{t^{\frac{1}{2}}}\|_{L^2_v}]\\
    \lesssim& Mt^{-\frac{3}{2}}+M^3t^{-\frac{3}{2}}\log t.
\end{align*}
Then, for the rest of the terms, we first rewrite them before taking the norm. We also use the notation $\approx$ to denote ignoring complex conjugates, which don't impact the norm.  This gives
\begin{align*}
f_2 =&  \frac{(\log t)^2}{4t^2} \W (|\W|^4+4v|\W|^2\Rea(\W'\bW)+2v^2\Rea(\W'\bW)^2),\\
    \|f_2\|_{L^2_v}\lesssim& t^{-2}(\log t)^2\lp\|\W\|_{L^2}\|\W\|_{L^\infty}^4+\|\W\|_{L^\infty}^3\|v\W\|_{L^\infty} \|\W'\|_{L^2}+\|v\W\|_{L^\infty}^2\|\W'\|_{L^\infty}\|\W'\|_{L^2}\rp ,\\
    \lesssim& M^5 t^{-2}(\log t)^2,\\
f_3=&-\frac{i}{t^2}\log t \lp  \pv (\W |\W|^2)+\frac{v}{2}\W\pv^2|\W|^2 \rp\\
    \approx&t^{-2 }\log t \lp \W^2\W' +v\W [ \W'' \bW+|\W'|^2]\rp,\\
    \|f_3\|_{L^2_v}\lesssim& t^{-2}\log t\Big[\|\W\|_{L^\infty}\|\W'\|_{L^\infty}\|\W\|_{L^2}+\|v\W\|_{L^\infty}\|\W\|_{L^\infty}\|\W''\|_{L^2_v}+\|v\W\|_{L^\infty}\|\W'\|_{L^\infty}\|\W'\|_{L^2_v}\Big]\\
    \lesssim& M^3 t^{-2}\log t+M^3t^{-\frac{3}{2}},\\
f_4=&-\frac{iv}{t^2}\log t  \W' \pv|\W|^2 +t^{-2}\W'', \\
   \|f_4\|_{L^2_v}\lesssim& t^{-2}\log t\|\W'\|_{L^\infty}\|\W'\|_{L^\infty}\|v\W\|_{L^2}+t^{-2}\|\W''\|_{L^2}\\
\lesssim& M^3t^{-\frac{3}{2}}\log t+Mt^{-2+\frac{1}{4}-\delta},\\
    \|f_5\|_{L^2_v}\lesssim& t^{-2}\|\W\|_{L^\infty}^2\|\W'\|_{L^2_v}+t^{-2}\log t\|\W\|_{L^\infty}^4\|\W\|_{L^2}+t^{-2}\log t\|\W\|^3_{L^\infty}\|v\W\|_{L^\infty}\|\W'\|_{L^2}\\
    \lesssim& M^3t^{-2}+M^5t^{-2}\log t.
\end{align*}
Then we have
\begin{align*}
   \|f\|_{L^2_x}
   \lesssim& Mt^{-\frac{3}{2}+\delta}.
\end{align*}
Next, we turn to estimating $Lf$, and afterward we will prove the $\px f$ bound, making use of the previous two bounds. First rewrite the vector field using the change of variables $v=x/t$:
$L=x+2ti\px=x+2i \p_v$. Then, applying $L$ to $f$ gives
\begin{align*}
    Lf =&xf+2it(\px h)f+ 2it^{-\frac{1}{2}}e^h \p_v (f_1+...+f_5).
\end{align*}
Notice that the $xf$ term above cancels with the following term coming from the derivative of $h$:
\begin{align*}
 2ti   \px h =&-x+\log t |\W|^2+2v\Rea (\W' \bW).
\end{align*}
After this cancellation, what remains is
\begin{align*}
    Lf=&\underbrace{\log t |\W|^2f}_{B_1}+\underbrace{2v\Rea (\W' \bW)f}_{B_2}+\underbrace{2it^{-\frac{1}{2}}e^h \p_v (f_1+...+f_5)}_{B_3}.
    \end{align*}
Splitting this into three terms, we can start by directly estimating the first two
\begin{align*}
    \|B_1\|_{L^2_x}\lesssim&\log t \|\W\|_{L^\infty}^2\|f\|_{L^2_x}\lesssim  M^3t^{-\frac{3}{2}+\delta}\log t,\\
 \|B_2\|_{L^2_x}\lesssim&\|v\W\|_{L^\infty}\|\W'\|_{L^\infty}\|f\|_{L^2}\lesssim M^3t^{-\frac{3}{2}+\delta}.
 \end{align*}
For the third term, first observe that
\begin{align*}
\|B_3\|_{L^2_x}=2\|\pv(f_1+...+f_5)\|_{L^2_v}.
\end{align*}
In order to bound this term, we must calculate the derivative of each $f_i$ component. 
\begin{align*}
    \p_v f_1 =&i\pt \W'+\log t \W \Rea( \W_t \bW)+v\log t \W' \Rea( \W_t \bW)+v\log t \W  \Rea (\W'_t \bW+ \W' \bW_t),\\
    \|\pv f_1\|_{L^2_v}\lesssim& \|\pt\W'\|_{L^2_v}+\log t \Big(\|\W\|_{L^\infty}\|\pt \W\|_{L^2_v}(\|\W\|_{L^\infty}+ \|v\W'\|_{L^\infty})\\
    &\hspace{2.4cm} +\|v\W\|_{L^\infty}(\|\W\|_{L^\infty}\|\pt\W'\|_{L^2_v}+ \|\W'\|_{L^\infty}\|\pt \W\|_{L^2_v}) \Big)\\
  \lesssim& M t^{-\frac{3}{2}}|\log t|,\\
    \p_v f_2 \approx& \frac{(\log t)^2}{4t^2}\Big[\W'\lp|\W|^4+4v|\W|^2\Rea(\W'\bW)+v^2(\W')^2\W^2\rp+ \W^4\W' +\W^3(\W')^2\\
    +&v\W^3\W''\W+v \W^3(\W')^2+v^2\W^2\W'(\W'' \W)+v^2 \W^2(\W')^3    \Big],\\
    \|\pv f_2\|_{L^2_v}\lesssim & \frac{(\log t)^2}{4t^2}\Big[\|\W \|_{L^\infty}^4 \|\W' \|_{L^2}  +   \|v\W \|_{L^\infty} \|\W \|_{L^\infty}^2 \|\W' \|_{L^\infty} \|\W' \|_{L^2} + \|\W \|_{L^\infty}^2\|\W' \|_{L^\infty}^2 \|\W' \|_{L^2}  \\
    +& \|v\W \|_{L^\infty} \|\W \|_{L^\infty}^3\|\W'' \|_{L^2} +\|v\W \|_{L^\infty}^2\|\W \|_{L^\infty}\|\W '\|_{L^\infty} \|\W'' \|_{L^2}  +\|v\W \|_{L^\infty}^2\|\W '\|_{L^\infty}^2  \|\W' \|_{L^2}      \Big]\\
    \lesssim& M^5(\log t)^2 t^{-\frac{7}{4}-\delta},\\
    \pv f_3\approx& -i t^{-2}\log t \Big [ (\W')^2\W+\W^2 \W''+2\W  \Rea(\W''\bW+|\W'|^2) \\
    +&v\lp 2\W'\Rea (\W''\bW+|\W'|^2)+2\W \Rea(\W'''\bW+2\W''\bW')  \rp    \Big],\\
    \|\pv f_3\|_{L^2_v}\lesssim&t^{-2}\log t \Big [ \|\W \|_{L^\infty} \|\W'\|_{L^\infty}\|\W' \|_{L^2}+\|\W \|_{L^\infty}^2\|\W'' \|_{L^2} +  \|\W \|_{L^\infty}^2 \|\W'' \|_{L^2}\\
    +&\|\W \|_{L^\infty} \|\W'\|_{L^\infty} \|\W' \|_{L^2} +  \|v\W \|_{L^\infty}  \|\W' \|_{L^\infty}\|\W'' \|_{L^2}+ \|v\W' \|_{L^\infty} \|\W' \|_{L^\infty}\|\W' \|_{L^2}\\
    +&  \|v\W \|_{L^\infty} \|\W \|_{L^\infty} \|\W''' \|_{L^2}+ \|\W \|_{L^\infty}\|\W' \|_{L^\infty}  \|\W'' \|_{L^2} \Big]\\
    \lesssim& M^3t^{-\frac{5}{4}-\delta}\log t,\\
    \p_v f_4\approx& -\frac{2i}{t^2}\log t \lp \W' \Rea(\W'\bW) +v(\W'' \Rea(\W'\bW) + \W'' \Rea(\W''\bW) +\W'' |\W|^2    ) \rp   +\frac{1}{t^2}\p_v^3\W, \\
      \|\pv f_4\|_{L^2_v}  \lesssim& t^{-2}\log t \|\W'\|_{L^\infty}\lp   \|\W\|_{L^\infty} \|\W'\|_{L^2} + \|v\W\|_{L^\infty} \|\W''\|_{L^2} + \|v\W'\|_{L^\infty} \|\W'\|_{L^2}\rp+ t^{-2}\|\W'''\|_{L^2}\\
 \lesssim& t^{-2}\log t(M^3+M^3t^{\frac{1}{4}-\delta})+Mt^{-\frac{5}{4}},\\
    \pv f_5\approx& t^{-2}(\W |\W'|^2+ \W^2 \W'')+t^{-2}\log t [ \W^4 \W'+ v\W^3 (\W')^2+ v\W^4 \W''],\\
     \|\pv f_5\|_{L^2_v}\lesssim&t^{-2}(M^3+M^3t^{\frac{1}{4}-\delta})+t^{-2}\log t(M^5+M^5t^{\frac{1}{4}-\delta})\\
     \lesssim& M^3t^{-\frac{5}{4}}.
\end{align*}
Overall, we get
\begin{align*}
    \|Lf\|_{L^2_x}\lesssim Mt^{-\frac{5}{4}}.
\end{align*}

\subsection{\texorpdfstring{$\px f$ bound}{partial f bound}}
Since we already have a bound for $Lf$, we can use it to estimate $\px f.$
We rearrange the equation for $L$ in terms of $\px:$
\begin{align*}
    Lf=(x+2ti \px)f, \,\,\,\px f= \frac{1}{2ti}[Lf-xf].
\end{align*}
Taking the $L^2$ norm, it follows that
\begin{align*}
        \|\px f\|_{L^2_x}\lesssim \frac{1}{t}  \|Lf\|_{L^2_x}+  \|v f\|_{L^2_x}\lesssim Mt^{-\frac{9}{4}}+ \|v f\|_{L^2_x}. 
\end{align*}
For this, we multiply each term in the calculation of $f$ by $v$.  This produces
\begin{align*}
    vf_1\approx& \frac{1}{t}vW_{t^{\frac{1}{2}}}+v^2\frac{1}{t}\log t \W^2 W_{t^{\frac{1}{2}}},\\
    \|vf_1\|_{L^2_v}\lesssim& t^{-1}[\|vW_{t^{\frac{1}{2}}}\|_{L^2_v}+\log t\|v\W\|_{L^\infty}\|v\W\|_{L^\infty}\|W_{t^{\frac{1}{2}}}\|_{L^2_v}]\\
    \lesssim& Mt^{-\frac{3}{2}}+M^3t^{-\frac{3}{2}}\log t,\\
vf_2 =&  \frac{(\log t)^2}{4t^2} v\W (|\W|^4+4v|\W|^2\Rea(\W'\bW)+2v^2\Rea(\W'\bW)^2),\\
    \|vf_2\|_{L^2_v}\lesssim& t^{-2}\log t \|v\W\|_{L^\infty}\lp\|\W\|_{L^2}\|\W\|_{L^\infty}^3+\|\W\|_{L^\infty}^2\|v\W\|_{L^\infty}\|\W'\|_{L^2}+\|v\W'\|_{L^\infty}^2\|\W\|_{L^2} \rp \\
    \lesssim& M^5 t^{-2}\log t,\\
    vf_3
    \approx&t^{-2}\log t v\W^2\W' +\log t v^2\W t^{-2}[ \W'' \bW+|\W'|^2],\\
    \|vf_3\|_{L^2_v}\lesssim& t^{-2}\log t\Big[\|v\W\|_{L^\infty}\|\W'\|_{L^\infty}\|\W\|_{L^2}+\|v\W\|_{L^\infty}^2\|\W''\|_{L^2_v}+\|v\W\|_{L^\infty}\|\W'\|_{L^\infty}\|v\W'\|_{L^2_v}\Big]\\
    \lesssim& M^3 t^{-\frac{5}{4}}\log t,\\
vf_4=& v^2t^{-2}\log t \W' \W' \W+ t^{-2}v\W'',\\
   \|vf_4\|_{L^2_v}\lesssim& t^{-2+\delta}\|v\W'\|_{L^\infty}\|\W'\|_{L^\infty}\|v\W\|_{L^2}+t^{-2}\|v\W''\|_{L^2}\\
\lesssim& M^3t^{-\frac{3}{2}+\delta},\\
  v f_5 =&it^{-2}v\p_v(|\W|^2\W) +\frac{1}{2t^2}\log t v|\W|^4\W +\frac{1}{2t^2}v^2\log t |\W|^2\W \p_v |\W|^2,\\
    \|vf_5\|_{L^2_v}\lesssim& t^{-2}\|v\W\|_{L^\infty}\|\W\|_{L^\infty}\|\W'\|_{L^2_v}+t^{-2+\delta}\lp\|\W\|_{L^\infty}^3\|v\W\|_{L^\infty}\|\W\|_{L^2}+\|\W\|^2_{L^\infty}\|v\W\|_{L^\infty}^2\|\W'\|_{L^2}\rp\\
    \lesssim& M^3t^{-2}+M^5t^{-2}\log t.
\end{align*}
Therefore,
\begin{align*}
        \|\px f\|_{L^2_x}\lesssim Mt^{-\frac{5}{4}}\log t.
\end{align*}

\subsection{\texorpdfstring{$\px Lf$}{partial Lf}}
Lastly, we prove the $L^2$ bound on the derivative of $Lf$. From our calculation before, we have
\begin{align*}
    Lf=&\log t |\W|^2f+2v\Rea (\W' \bW)f+2it^{-\frac{1}{2}}e^h \p_v (f_1+...+f_5)\\
    =&B_1+B_2+B_3.
    \end{align*}
Differentiating leads to
\begin{align*}
      \px  Lf=&\px (B_1+B_2+B_3),
\end{align*}
where we calculate and bound each term separately.
\begin{align*}
    \px B_1=&\log t \px f \cdot |\W|^2+ \log t \cd f \cd \frac{1}{t}\pv |\W|^2,\\
    \| \px B_1\|_{L^2_x}\lesssim& \log t \|\W\|_{L^\infty}^2\|\px f\|_{L^2_x}+t^{-1}\log t \|\W\|_{L^\infty}\|\W'\|_{L^\infty}\|f\|_{L^2_x}\\
    \lesssim& M^3 t^{-\frac{5}{4}}\log t,\\
    \px B_2=& 2v\Rea (\W' \bW)\px f+ 2t^{-1}\Rea (\W' \bW)f+ 2t^{-1}v\Rea (\W'' \bW+|\W'|^2)f,\\
     \| \px B_2\|_{L^2_x}\lesssim& \|v\W\|_{L^\infty}\|\W'\|_{L^\infty} \| \px f\|_{L^2_x}+t^{-1}\lp \|\W\|_{L^\infty}\|\W'\|_{L^\infty}+\|v\W\|_{L^\infty}\|\W''\|_{L^\infty} + \|\W'\|_{L^\infty}^2 \rp\|  f\|_{L^2_x}\\
     \lesssim& M^2 t^{-\frac{5}{4}}\log t.
\end{align*}
For the third term, we split it up further into two pieces.
\begin{align*}
    \px B_3=& \underbrace{2it^{-\frac{1}{2}}(\px h) e^h\pv (f_1+...+f_5)}_{B_4}+ \underbrace{2it^{-\frac{3}{2}} e^h \pv^2(f_1+...+f_5)}_{B_5}.
\end{align*}
We see that $B_4$ can be written in terms of $B_3$ as
\begin{align*}
    B_4=&(\px h) \cd B_3\\
    =&  \underbrace{\frac{i}{2}vB_3}_{B_{4a}}-\underbrace{\frac{i}{2t}\log t (|\W|^2+v\pv|\W|^2)B_3}_{B_{4b}}.
\end{align*}
First, we multiply all the previous bounds for $B_3$ by $v.$ Recall
\begin{align*}
\|B_3\|_{L^2_x}=&2\|\pv(f_1+...+f_5)\|_{L^2_v},
\end{align*}
so we must calculate
\begin{align*}
 \|v\pv f_1\|_{L^2_v}\lesssim& \|v\pt\W'\|_{L^2_v}+\log t \Big(\|v\W\|_{L^\infty}\|\pt \W\|_{L^2_v}(\|\W\|_{L^\infty}+ \|v\W'\|_{L^\infty})\\
    &\hspace{2.4cm} +\|v\W\|_{L^\infty}(\|v\W\|_{L^\infty}\|\pt\W'\|_{L^2_v}+ \|v\W'\|_{L^\infty}\|\pt \W\|_{L^2_v}) \Big)\\
    \lesssim&Mt^{-\frac{5}{4}-\delta}+ \log t (M^4t^{-\frac{3}{2}}+M^3t^{-\frac{5}{4}-\delta} + M^3 t^{-\frac{3}{2}}  ),  \\
 \|v\pv f_2\|_{L^2_v}\lesssim & \frac{(\log t)^2}{4t^2}\Big[\|\W \|_{L^\infty}^3 \|v\W \|_{L^\infty}\|\W' \|_{L^2}  +   \|v\W \|_{L^\infty}^2 \|\W \|_{L^\infty} \|\W' \|_{L^\infty} \|\W' \|_{L^2} \\
 +& \|\W \|_{L^\infty}\|v\W \|_{L^\infty}\|\W' \|_{L^\infty}^2 \|\W' \|_{L^2}   + \|v\W \|_{L^\infty}^2 \|\W \|_{L^\infty}^2\|\W'' \|_{L^2}\\
 +&\|v\W \|_{L^\infty}^3\|\W '\|_{L^\infty} \|\W'' \|_{L^2}  +\|v\W \|_{L^\infty}^2\|\W '\|_{L^\infty}^2  \|v\W' \|_{L^2}      \Big]\\
    \lesssim& M^5t^{-2+\frac{1}{4}}\log t, \\
     \|v\pv f_3\|_{L^2_v}\lesssim&t^{-2}\log t \Big [ \|v\W \|_{L^\infty} \|\W'\|_{L^\infty}\|\W' \|_{L^2}+\|\W \|_{L^\infty}\|v\W \|_{L^\infty}\|\W'' \|_{L^2}  \\
    +& \|\W \|_{L^\infty}\|v\W \|_{L^\infty} \|\W'' \|_{L^2} + \|v\W \|_{L^\infty} \|\W'\|_{L^\infty} \|\W' \|_{L^2}+ \|v\W \|_{L^\infty}  \|v\W' \|_{L^\infty}\|\W'' \|_{L^2}\\
    +&\|v\W' \|_{L^\infty}^2\|\W' \|_{L^2}+  \|v\W \|_{L^\infty}^2 \|\W''' \|_{L^2}+ \|v\W \|_{L^\infty}\|\W' \|_{L^\infty}  \|\W'' \|_{L^2} \Big]\\
    \lesssim&M^3 t^{-\frac{5}{4}-\delta}\log t,\\
    \|v\pv f_4\|_{L^2_v} \lesssim&t^{-2}\log t\lp  \|v\W\|_{L^\infty} \|\W'\|_{L^\infty} \|\W'\|_{L^2} + \|v\W\|_{L^\infty} \|v\W'\|_{L^\infty} \|\W''\|_{L^2} + \|v\W'\|_{L^\infty}^2\|\W'\|_{L^2}\rp\\
    +& t^{-2}\|v\W'''\|_{L^2}\\
    \lesssim& Mt^{-\frac{5}{4}-\delta},\\
     v\pv f_5\approx& t^{-2}(v\W |\W'|^2+ v\W^2 \W'')+t^{-2}\log t [ v\W^4 \W'+ v^2\W^3 (\W')^2+ v^2\W^4 \W''],\\
     \|v\pv f_5\|_{L^2_v}\lesssim&Mt^{-\frac{7}{4}}.
\end{align*}
This shows 
\begin{align*}
    \|B_{4a}\|_{L^2_x}\lesssim Mt^{-\frac{5}{4}-\delta}.
\end{align*}
Next, for $B_{4b},$
\begin{align*}
    \left\|B_{4b}\right\|_{L^2_x}\leq  \left\|\frac{i}{2t}\log t (|\W|^2+v\pv|\W|^2)\right\|_{L^\infty} \|B_3\|_{L^2_x}\lesssim M^2t^{-1+\delta} \cd Mt^{-\frac{5}{4}}.
\end{align*}
Combining these two cases results in
\begin{align*}
    \|B_4\|_{L^2_x}\lesssim Mt^{-\frac{5}{4}-\delta}.
\end{align*}
For $B_5,$ we can see that taking another derivative of $f_1,...,f_5$ adds at most $t^{\frac{1}{2}}$ growth.  Since we have an extra factor of $t^{-1}$ compared to $B_3,$ we end up with
\begin{align*}
      \|B_5\|_{L^2_x}\lesssim Mt^{-\frac{5}{4}-\frac{1}{2}-\delta}.
\end{align*}
Overall, this yields
\begin{align*}
      \|\px Lf\|_{L^2_x}\lesssim Mt^{-\frac{5}{4}}\log t,
\end{align*}
which proves the lemma.

\end{proof}

\bibliography{refs}{}
\bibliographystyle{plain}

\end{document}